\renewcommand{\epsilon}{\varepsilon}
\renewcommand{\mid}{|}
\newtheorem{theorem}{Theorem}
\newtheorem{corollary}{Corollary}[section]
\newtheorem{lemma}[corollary]{Lemma}
\newtheorem{proposition}[corollary]{Proposition}
\newcommand{\Prob}{{\mathbb P}}
\newcommand{\E}{{\mathbb E}}
\newcommand{\R}{{\mathbb R}}
\newcommand{\C}{{\mathbb C}}
\newcommand{\Half}{{\mathbb H}}
\newcommand{\Disk}{{\mathbb D}}
\newcommand{\F}{{\mathcal F}}
\newcommand{\seq}{{\mathcal S}}
\newcommand{\exc}{{\mathcal E}}
\newcommand{\imax}{{\mathcal I}}
\newcommand{\bd}{\partial}
\newcommand{\dist}{\operatorname{dist}}
\newcommand{\inrad}{\operatorname{inrad}}
\newcommand{\diam}{\operatorname{diam}}
\renewcommand{\Re}{\operatorname{Re}}
\renewcommand{\Im}{\operatorname{Im}}
\begin{document}
\begin{frontmatter}

\title{Multi-point Green's functions for SLE and an~estimate of Beffara}
\runtitle{Multi-point Green's functions for SLE}

\begin{aug}
\author[A]{\fnms{Gregory F.} \snm{Lawler}\thanksref{t1}\ead[label=e1]{lawler@math.uchicago.edu}}
\and
\author[A]{\fnms{Brent M.} \snm{Werness}\corref{}\ead[label=e2]{bwerness@math.uchicago.edu}}
\runauthor{G. F. Lawler and B. M. Werness}
\affiliation{University of Chicago}
\address[A]{Department of Mathematics\\
University of Chicago\\
5734 University Avenue\\
Chicago, Illinois 60637-1546 \\
USA\\
\printead{e1}\\
\phantom{E-mail: }\printead*{e2}} 
\end{aug}

\thankstext{t1}{Supported by NSF Grant DMS-09-07143.}

\received{\smonth{1} \syear{2011}}
\revised{\smonth{7} \syear{2011}}

%
\begin{abstract}
In this paper we define and prove of the existence of the multi-point
Green's function for $\mbox{SLE}$---a normalized limit of the probability that
an $\mbox{SLE}_\kappa$ curve passes near to a pair of marked points in the
interior of a domain. When $\kappa<8$ this probability is nontrivial,
and an expression can be written in terms two-sided radial $\mbox{SLE}$. One
of the main components to our proof is a refinement of a bound first
provided by Beffara [\textit{Ann. Probab.} \textbf{36} (2008)
1421--1452]. This work contains a proof of
this bound independent from the original.
\end{abstract}

%
\begin{keyword}[class=AMS]
\kwd[Primary ]{60J67}
\kwd[; secondary ]{82B27}.
\end{keyword}
\begin{keyword}
\kwd{Schramm--Loewner evolutions}
\kwd{Green's function}.
\end{keyword}

\end{frontmatter}

\section{Introduction}\label{intro}

The Schramm--Loewner evolution ($\mbox{SLE}$) is a random process first
introduced by Oded Schramm in~\cite{first} as a candidate for scaling
limits of models from statistical physics which are believed to be
conformally invariant. Since its introduction, $\mbox{SLE}$ has been
rigorously established as the scaling limit for a number of these
processes, including the loop-erased random walk~\cite{loop}, the
percolation exploration process~\cite{perc} and the interface of the
Gaussian free field~\cite{gff}. For a general introduction to $\mbox{SLE}$
see, for example,~\cite{Lbook,parkcity,werner}.

Chordal $\mbox{SLE}_\kappa$ for $\kappa> 0$ in the upper half-plane ($\Half$)
is a one-parameter family of noncrossing random curves $\gamma\dvtx
[0,\infty) \rightarrow\overline\Half$ with $\gamma(0) = 0$ and\break
$\gamma
(\infty^-) = \infty$. Depending on $\kappa$, the geometry of the curve
has several different phases. When $0<\kappa\le4$, the curves are
simple (no self intersections). When $\kappa> 4$, the curves are no
longer simple, but they remain noncrossing. When $\kappa\ge8$, the
curve is space filling, passing through every point in $\overline
{\Half}$.

When examining geometric questions about the $\mbox{SLE}$ curves, such as the
almost sure Hausdorff dimension in~\cite{Beffara}, it is often useful
to be able to provide estimates on the probability that the process
$\gamma(t)$ passes near a series of marked points in $\Half$. However,
the non-Markovian nature of this process makes estimating such
probabilities difficult.\vadjust{\goodbreak}

When trying to understand the probability that $\mbox{SLE}_\kappa$ gets near
to some point $z \in\Half$ it is convenient to consider the conformal
radius of $z$ in $H_t := \Half\setminus\gamma(0,t]$, which we denote
by $\Upsilon_t(z)$, instead of the Euclidean distance from $z$ to
$\gamma(0,t]$; see Section~\ref{notations} for the definition. This
change does little to the geometry of the problem being considered
since the conformal radius differs from the Euclidean distance by at
most a universal multiplicative constant.

The Green's function for $\mbox{SLE}_\kappa$ from $0$ to $\infty$ in $\Half$ for
$\kappa< 8$ is a form of the normalized probability of passing near to
a point in $\Half$. It is defined by
\[
\lim_{\epsilon\rightarrow0} \epsilon^{d-2} \Prob\{\Upsilon_\infty(z)
< \epsilon\} = c_*G_\Half(z;0,\infty),
\]
where $d := 1+ \kappa/8$ is the Hausdorff dimension of the $\mbox{SLE}_\kappa
$, and $c_*$ is some known constant depending on $\kappa$. The Green's
function was first computed in~\cite{RS} (although they neither used
this name nor definition), and the exact formula found there is given
in Section~\ref{notations}. The existence of the limit requires some
argument, and a form of it is proven in Lemma~\ref{fastGreen}.

We wish to show analogously that
\[
\lim_{\epsilon,\delta\rightarrow0} \epsilon^{d-2} \delta^{d-2}
\Prob\{
\Upsilon_\infty(z)<\epsilon; \Upsilon_\infty(w)<\delta\}
\]
exists and can be written as
\[
c_*^2G_{\Half}(z;0,\infty)\E_z^*[G_{H_{T_z}}(w;z,\infty
)]+c_*^2G_{\Half
}(w;0,\infty)\E_w^*[G_{H_{T_w}}(z;w,\infty)],
\]
where $\E^*_z$ is the expectation of a particular form of $\mbox{SLE}$ called
\textit{two-sided radial $\mbox{SLE}$}, which can be understood as chordal $\mbox{SLE}$
conditioned to pass though the point $z$, and $G_{H_{T_z}}$\vspace*{1pt} is the
Green's function for $\mbox{SLE}$ in the domain remaining at the time it does
so. The form of the limit as the sum of two similar terms comes from
the two possible orders that the curve can pass through $z$ and $w$,
and each term individually can be thought of as an ordered Green's function.

To prove this result, we will use techniques similar to those used in~\cite{Beffara}, where Beffara (in slightly different notation)
established the estimate that there exists some $c > 0$ such that for
any two points $z,w \in\Half$ with $\Im(z),\break \Im(w)\ge1$
\[
\Prob\{\Upsilon_\infty(z) < \epsilon; \Upsilon_\infty(w) <
\epsilon\} <
c \epsilon^{2(2-d)}|z-w|^{d-2}.
\]

Similar techniques arise since both proofs need to make rigorous the
heuristic that an $\mbox{SLE}$ curve conditioned to pass through $z$ and then
$w$ will do so directly---without first approaching very near $w$
before passing through $z$.
Figure~\ref{goodvsevil} demonstrates some of the issues which can occur
which make this a tricky statement to make rigorous.

%
\begin{figure}

\includegraphics{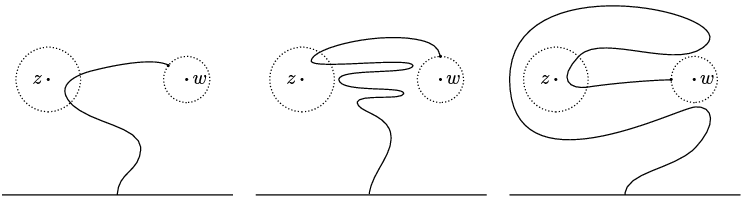}

\caption{We wish to show that curves that get near $z$ then near $w$
concentrate on curves like those in the left image. Estimating the
probability of such curves is easy by repeated application of the Green's
function. However, such simple estimation gives the same order of
magnitude to curves like those in the center image. This issue can be
overcome as long as getting near to $w$ before $z$ decreases the
probability that the $\mbox{SLE}$ gets even closer to $w$ later on. This is
often the case; however, the right image shows an example where it is
not. In this case, once the curve gets near to $z$, it is essentially
guaranteed to pass near $w$. Controlling for these issues forms the
bulk of this work.}
\label{goodvsevil}
\end{figure}

In the process of proving the existence of the multi-point Green's
function for $\mbox{SLE}$, we also obtain an independent proof of a mild
generalization of Beffara's estimate---that there exists a $c > 0$ such
that for any $z,w \in\Half$ with $\Im(z)$, \mbox{$\Im(w) \ge1$}
\[
\Prob\{\Upsilon_\infty(z) < \epsilon; \Upsilon_\infty(w) < \delta
\} < c
\epsilon^{2-d}\delta^{2-d}|z-w|^{d-2}.
\]
While it may be possible to derive some of the lemmas we require
directly from the proof in~\cite{Beffara}, we include a complete proof
of them, along with Beffara's original estimate, so that the proof of
our main result is completely self-contained.

It is worth noting that Beffara's estimate itself immediately yields an
upper bound on the multi-point Green's function. For a lower bound, and
an application of the multi-point Green's function to the proof of the
existence of the ``natural parametrization'' of $\mbox{SLE}$, a
parametrization of $\mbox{SLE}$ by what can be thought of as a form
$d$-dimensional arc length; see~\cite{nat}.

The paper is structured as follows. Section~\ref{notations} begins by
establishing the notation used throughout the paper, and to provide a
few simple deterministic and random bounds required in the proofs that
follow. Section~\ref{twosided} then gives a brief introduction to
two-sided radial $\mbox{SLE}$ and collects the facts about this process that
we require to show the existence of the multi-point Green's function.
Section~\ref{multipoint} provides a proof of the existence of the
multi-point Green's function assuming an estimate derived from our proof
of Beffara's estimate. The rest of the paper is dedicated to our
independent proof of Beffara's estimate. To aid in the presentation of
this proof, we have separated the bounds required by the type of
argument required: topological lemmas, probabilistic lemmas and
combinatorial lemmas. The proof of one of the topological lemmas is
left to the \hyperref[app]{Appendix} as the result is intuitive and the formal proof of
it does little to aid the understanding of our main
results.\looseness=-1

In this paper we fix $\kappa< 8$ and constants implicitly depend on
$\kappa$.

\section{Preliminaries}

\subsection{Notation}\label{notations}

We set
\begin{eqnarray*}
a &=& \frac2{\kappa},\qquad d = 1 + \frac\kappa8 = 1 + \frac1{4a}, \\
\beta&=&\frac8 \kappa- 1 = 4a-1> 0.
\end{eqnarray*}

The Green's function for chordal $\mbox{SLE}_\kappa$ (from $0$ to
$\infty$ in $\Half$) is
\[
G(x+iy) = G(r e^{i\theta})
= r^{d-2} \sin^{4a+{1}/({4a})-2} \theta=
y^{d-2} \sin^{\beta} \theta.
\]
The Green's function can be defined for other
simply connected domains as we now demonstrate.
If $D$ is a simply connected domain,
$z_1,z_2$ are distinct boundary points, let
$\Phi_D\dvtx D \rightarrow\Half$ be a conformal transformation
with \mbox{$\Phi_D(z_1) = 0$}, $\Phi_D(z_2) = \infty$. This is
unique up to a final dilation.
If
$w \in D$,
we define
\[
S_D(w;z_1,z_2) = \sin\arg\Phi_D(w) ,
\]
which is independent of the choice of $\Phi_D$ and
gives a conformal invariant. We let
$\Upsilon_D(w)$ be (twice the) conformal radius
of $w$ in $D$; that is, if
$f\dvtx \Disk\rightarrow D$ is a conformal transformation with
$f(0) = w$, then $\Upsilon_D(w) = 2 |f'(0)|$.
This satisfies the scaling rule
\[
\Upsilon_{f(D)}(f(w)) = |f'(w)| \Upsilon_D
(w).
\]
It is easy to check that $\Upsilon_\Half(x+iy) = y$, and, more
generally,
\[
\Upsilon_D(w) =
\frac{\Im(\Phi_D(w))}{|\Phi_D'(w)|}.
\]
The Green's function for $\mbox{SLE}_\kappa$ from $z_1$ to $z_2$
in $D$ is defined by
\[
G_D(w;z_1,z_2) = \Upsilon_D(w)^{d-2} S(w;z_1,z_2)^\beta.
\]
It satisfies the scaling rule
\[
G_D(w;z_1,z_2) = |f'(w)|^{2-d}
G_{f(D)}(f(w);f(z_1),f(z_2)) .
\]
For a proof that the Green's function so defined satisfies the limit
claimed in the \hyperref[intro]{Introduction}; see Lemma~\ref{fastGreen}.

Let $\inrad_D(w) = \dist(w,\partial D)$ denote the inradius.
Using the Koebe $(1/4)$-theorem, we know that
%
\begin{equation} \label{koebe}
\tfrac12 \inrad_D(w)
\leq\Upsilon_D(w)
\leq2 \inrad_D(w).
\end{equation}
Therefore,
\[
{G_D(w;z_1,z_2)} \asymp
{\inrad_D(w)^{d-2}
S_D(w;z_1,z_2)^{\beta} },
\]
where we write $f_1 \asymp f_2$ if there exists some constant $c$ such
that $f_1 \leq c f_2$ and
$f_2 \leq c f_1$.
We write
\[
\partial D = \partial_1 D \cup\partial_2 D \cup\{z_1,z_2 \},
\]
where $\partial_1 D, \partial_2 D$
denote the two open arcs of $\partial D$ with endpoints $z_1,z_2$. Let
$\hat S_D(w;z_1,z_2)$ be the minimum of the harmonic measures of
$\partial_1 D, \partial_2 D$ from~$w$. This is a conformal invariant,
and a
simple computation in $\Half$ shows
that
\[
\hat S_D(w;z_1,z_2) =\frac1 \pi\min\{\arg
\Phi_D(w), \pi-
\arg\Phi_D(w)\},
\]
and hence
\[
\hat S_D(w;z_1,z_2) \asymp S_D(w;z_1,z_2)
\]
and
\[
{G_D(w;z_1,z_2)} \asymp
{\inrad_D(w)^{d-2}
\hat S_D(w;z_1,z_2)^{\beta} }.
\]

To bound the harmonic measure, it is often useful to use the Beurling
estimate. We recall it here; for a proof see, for example,
\cite{probTech}, Chapter V. Let $B_t$ be a standard Brownian motion and
$\tau
_D$ denote the first exit time of some domain $D$ for this Brownian motion.
\begin{proposition}[(Beurling estimate)]\label{beurling1}
There is a constant $c > 0$ such that if $z \in\Disk$, and $K$ is a
connected compact subset of $\overline\Disk$ with $0 \in K$ and $K
\cap
\bd\Disk\neq\varnothing$, then
\[
\Prob^z\{B[0,\tau_\Disk]\cap K = \varnothing\} \le c |z|^{1/2}.
\]
\end{proposition}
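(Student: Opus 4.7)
My plan is to prove the estimate in two steps: first reduce, via Beurling's projection theorem (circular symmetrization), to the extremal case where $K$ is the radial slit $[0,1]$ and the starting point is $-|z|$; then handle that case explicitly using a square-root conformal map.

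For the reduction, note that since $K$ is a compact connected subset of $\overline\Disk$ containing $0$ and meeting $\bd\Disk$, its circular projection $K^* := \{|w| : w \in K\}$ is a connected subset of $[0,1]$ containing both endpoints, and hence equals all of $[0,1]$. Beurling's projection theorem then yields
\[
\Prob^z\{B[0,\tau_\Disk] \cap K = \emptyset\} \leq \Prob^{-|z|}\{B[0,\tau_\Disk] \cap [0,1] = \emptyset\},
\]
so it suffices to bound the right-hand side by $c\,|z|^{1/2}$.

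For the explicit calculation, write $r = |z|$. The branch of $\sqrt{\cdot}$ with cut along $[0,\infty)$ maps $\Disk \setminus [0,1]$ conformally onto the upper half-disk $\Disk \cap \Half$, sending $-r$ to $i\sqrt{r}$, unfolding the slit $[0,1]$ to the diameter $[-1,1]$, and taking $\bd\Disk$ to the upper unit semicircle. The required probability therefore equals the harmonic measure of the upper semicircle in $\Disk \cap \Half$ from $i\sqrt{r}$. Composing with the M\"obius map $\zeta \mapsto (1+\zeta)/(1-\zeta)$ onto the first quadrant and then with $\zeta \mapsto \zeta^2$ onto the upper half-plane reduces this to a single argument computation, yielding the closed form $(2/\pi)\arctan(2\sqrt{r}/(1-r))$. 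The elementary bound $\arctan(x) \leq x$ handles small $r$, while the trivial bound by $1$ handles $r$ bounded away from $0$; combining gives the desired $O(\sqrt{r})$ estimate.

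The only nontrivial external ingredient is Beurling's projection theorem itself, which I would take as a black box from \cite{probTech}. With that in hand, the remaining work amounts to a careful identification of the appropriate conformal maps and a direct harmonic-measure calculation; I expect no substantive analytic difficulty beyond invoking the projection theorem.
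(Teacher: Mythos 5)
Your proposal is correct, but note that the paper itself does not prove Proposition~\ref{beurling1}; it simply cites \cite[Chapter V]{probTech}, so there is no in-paper proof to compare against. Your two-step argument is the standard one: (i) reduce to the radial-slit extremal configuration via Beurling's projection theorem, taken as a black box, and (ii) compute the harmonic measure of the semicircle in $\Disk\cap\Half$ from $i\sqrt{r}$ by an explicit chain of conformal maps. I checked the details: the circular projection of $K$ is indeed all of $[0,1]$ by connectivity; the branch of $\sqrt{\cdot}$ cut along $[0,\infty)$ maps $\Disk\setminus[0,1]$ onto $\Disk\cap\Half$ with $-r\mapsto i\sqrt{r}$; the composition with $\zeta\mapsto(1+\zeta)/(1-\zeta)$ and then squaring sends $i\sqrt r$ to a point of argument $2\arctan\bigl(2\sqrt r/(1-r)\bigr)$, and the harmonic measure of the image of the semicircle (the negative real axis) from a point $w\in\Half$ is $\arg(w)/\pi$, giving exactly your closed form $\tfrac{2}{\pi}\arctan\bigl(2\sqrt r/(1-r)\bigr)$. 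The concluding bound is also fine: $\arctan x\leq x$ handles $r$ bounded away from $1$, and the trivial bound by $1$ handles the remaining range since $\sqrt r$ is then bounded below. So the argument is complete modulo the projection theorem, which is a reasonable thing to import, since the paper itself defers the whole proposition to the literature.
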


We may derive from this the following consequence.
\begin{proposition}\label{beurling2}
There is a constant $c > 0$ such that if $K$ is a connected compact
subset of $\overline\Half$ with $K \cap\R\neq\varnothing$, and $z_0
\in\Half$, $\epsilon> 0$ are such that $B_\epsilon(z_0) \cap K \neq
\varnothing$ then for $w \in\Half$,
\[
\Prob^w\{B[0,\tau_{\Half\setminus K}] \cap B_{\epsilon}(z_0) \neq
\varnothing\} \le c \biggl[\frac{\epsilon}{|z_0-w|}\biggr]^{1/2}.
\]
\end{proposition}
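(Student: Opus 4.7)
The approach is to deduce this half-plane Beurling estimate from its disk analogue, Proposition \ref{beurling1}, using conformal invariance of Brownian motion. The case $|z_0-w| \leq 4\epsilon$ is trivial---the right-hand side then exceeds $1/2$ and the bound holds for $c \geq 2$---so assume $R := |z_0-w| > 4\epsilon$.

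Pick $z_1 \in K \cap \overline{B_\epsilon(z_0)}$, which exists by hypothesis, and let $\phi(z) = (z-z_1)/(z-\overline{z_1})$ be the Möbius transformation mapping $\Half$ onto $\Disk$ with $z_1 \mapsto 0$ and $\R \mapsto \bd\Disk$. Under $\phi$, the image $\phi(K)$ is a connected compact subset of $\overline\Disk$ containing $0$ and meeting $\bd\Disk$---exactly the hypothesis of Proposition \ref{beurling1}---while by Koebe distortion, $B^* := \phi(B_\epsilon(z_0))$ is a neighborhood of $0$ of Euclidean size $\asymp \epsilon/\Im z_0$. By conformal invariance the event in question is equivalent to: Brownian motion in $\Disk$ started at $\phi(w)$ enters $B^*$ before hitting $\phi(K) \cup \bd\Disk$. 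I would bound this hitting probability by further mapping the component of $\phi(w)$ in $\Disk \setminus \phi(K)$ conformally onto $\Disk$ (restricting to a simple subarc of $\phi(K)$ first if needed to ensure simple connectivity); under this second map, the tip of the slit $\phi(K)$, located at $0 \in B^*$, becomes a boundary point, and Koebe's square-root distortion at the slit tip sends $B^*$ to a boundary arc of length $\asymp \sqrt{\epsilon/\Im z_0}$. The harmonic measure of this arc from the image of $\phi(w)$ is then controlled by the arc length times a Poisson-kernel weight.

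The main technical obstacle is showing that combining the Koebe arc length with the Poisson weight yields a bound depending only on $\epsilon/R$, free of the individual imaginary parts. This reflects a cancellation between the $(\Im z_0)^{-1/2}$ coming from the slit-tip distortion and a compensating $(\Im z_0)^{1/2}$ factor in the Poisson weight, which measures the separation between the image of $\phi(w)$ and the image of the slit tip. I expect that a case split on the relative sizes of $\Im w$, $\Im z_0$, and $R$ streamlines the bookkeeping: in the ``bulk'' regime where $R \lesssim \min(\Im w, \Im z_0)$ Proposition \ref{beurling1} can be applied almost directly, whereas in the ``shielded'' regime where $R$ is much larger than the imaginary parts---so that $\phi(w)$ is forced near $\bd \Disk$---the slit-tip square-root distortion is essential to extract the correct $(\epsilon/R)^{1/2}$ scaling.
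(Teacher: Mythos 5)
There is a genuine gap: your argument stops at the level of a plan, and the plan has unresolved difficulties. The Möbius map $\phi$ sending $\Half$ to $\Disk$ (with the slit base $z_1\mapsto 0$) puts you in the setting of Proposition~\ref{beurling1} only formally: that proposition bounds the probability of \emph{escaping} $\Disk$ without touching a connected compact $K$, whereas after applying $\phi$ the event you need to estimate is \emph{hitting a small interior disk} $B^*$ before hitting $\phi(K)\cup\bd\Disk$. These are not the same kind of event, and you cannot invoke Proposition~\ref{beurling1} directly; that is why you propose a second uniformization (slit map plus Koebe square-root distortion), and you explicitly flag the resulting ``cancellation'' between $(\Im z_0)^{\pm 1/2}$ factors as ``the main technical obstacle,'' without carrying it out. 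As written, the key step---converting the Koebe arc length plus a Poisson-kernel weight into a bound of the form $c\,(\epsilon/|z_0-w|)^{1/2}$ uniformly in $\Im z_0$ and $\Im w$---is unproved, and the proposed case split is a guess about what would make the bookkeeping tractable, not an argument.

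The paper's proof avoids all of this by choosing a different conformal map: the inversion $g(z)=\epsilon/(z-z_0)$, which carries $\C\setminus B_\epsilon(z_0)$ onto $\Disk$. This is the decisive choice. It turns $\bd B_\epsilon(z_0)$ into $\bd\Disk$, so the target event ``hit $B_\epsilon(z_0)$'' becomes ``exit $\Disk$,'' which is \emph{exactly} the event that Proposition~\ref{beurling1} controls, with $K'=g\bigl([\C\setminus\Half]\cup[K\setminus B_\epsilon(z_0)]\bigr)$ playing the role of the connected compact (it contains $0$ since $g(\infty)=0$, and it reaches $\bd\Disk$ because $K$ meets $B_\epsilon(z_0)$). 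The bound $c\,|g(w)|^{1/2}=c\,(\epsilon/|z_0-w|)^{1/2}$ then falls out in a single application of the disk estimate, with no slit maps, no square-root distortion, and no cancellation to track. If you want to repair your write-up, the cleanest fix is to abandon the $\Half\to\Disk$ Möbius normalization and instead invert about $z_0$ at the correct scale as the paper does.
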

\begin{pf}
Consider the map
\[
g(z) := \frac{\epsilon}{z-z_0},\qquad g \dvtx \C\setminus B_\epsilon(z_0)
\rightarrow\Disk.
\]
Let $K' = g([\C\setminus\Half]\cup[K\setminus B_\epsilon(z_0)])$,
and note that $K'$ is a connected compact subset of $\Disk$ with $0
\in
K'$ and $K' \cap\bd\Disk\neq\varnothing$. Thus by Proposition \ref
{beurling1} we know
\[
\Prob^{g(w)}\{B[0,\tau_\Disk]\cap K' = \varnothing\} \le c |g(w)|^{1/2},
\]
which, by the conformal invariance of Brownian motion and the
definition of $g$, is the desired statement.
\end{pf}

If $j=1,2$, let $\Delta_{D,j}(w;z_1,z_2)$ be the infimum of all $s$
such that there exists a curve $\eta\dvtx[0,1) \rightarrow D$ contained in
the disk of radius $s$ about
$w$ with $\eta(0) = w,\eta(1^-) \in\partial_jD$. Note that
\[
\inrad_D(w) = \min\{ \Delta_{D,1}(w;z_1,z_2),
\Delta_{D,2}(w;z_1,z_2)\}.
\]
We let
\[
\Delta_D^*(w;z_1,z_2) = \max\{ \Delta_{D,1}(w;z_1,z_2),
\Delta_{D,2}(w;z_1,z_2)\}.
\]
The Beurling estimate
implies that there is a $c < \infty$ such
that the probability a Brownian motion starting at $w$
reaches distance $\Delta_D^*(w;z_1,z_2)$ before leaving $D$
is bounded above by
\[
c\biggl[\frac{\inrad_D(w)}{\Delta_D^*(w;z_1,z_2)}\biggr]^{1/2}.
\]
Therefore,
%
\begin{equation} \label{beurling}
S_D(w;z_1,z_2) \asymp\hat S_D(w;z_1,z_2)
\leq c \biggl[\frac{\inrad_D(w)}{\Delta_D^*(w;z_1,z_2)} \biggr]^{1/2},
\end{equation}
which gives us the upper bound
\[
G_D(w;z_1,z_2) \le c \inrad_D(w)^{d-2+\beta/2}\Delta
_D^*(w;z_1,z_2)^{-\beta/2}.
\]

We will also need a fact which is a form of continuity of the Green's
function under a small perturbation of the domain. First consider the
following two lemmas on the conformal radius.
%
%
%
\begin{lemma} Let ${\mathcal B}_r$ denote the closed disk of radius
$e^{-r}$ about the
origin. Suppose $D$ is a simply connected subdomain of $\Disk$
containing the origin and
$e^{-r} < \inrad_D(0)$. Suppose $B_t $ is a Brownian motion starting at
the origin, and let
\[
\tau_D = \inf\{t\dvtx B_t \notin D\},\qquad
\tau_\Disk= \inf\{t \dvtx B_t \notin
\Disk\},\qquad
\sigma_{r,D} = \inf\{t \geq\tau_D\dvtx B_t \in{\mathcal B}_r\}.
\]
Then
\[
\Prob\{\tau_D < \sigma_{r,D} < \tau_\Disk\} = - \frac1r \log
[\Upsilon
_D(0)/2] .
\]
\end{lemma}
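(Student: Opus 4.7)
The plan is to apply the strong Markov property at $\tau_D$, reduce to an explicit harmonic measure computation on the annulus $\{e^{-r}\le|z|\le 1\}$, and then evaluate the resulting boundary integral using conformal invariance of planar Brownian motion.

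First I would condition on the position at time $\tau_D$. Because $D\subseteq\Disk$ one has $\tau_D\le\tau_\Disk$ automatically, and the hypothesis $\inrad_D(0)>e^{-r}$ forces $\bd D\cap\mathcal{B}_r=\emptyset$, so $B_{\tau_D}\notin \mathcal{B}_r$ and $\tau_D<\sigma_{r,D}$ on the event $\{B_{\tau_D}\in\Disk\}$; on the complementary event $\{B_{\tau_D}\in\bd\Disk\}$ we have $\tau_D=\tau_\Disk$ and the desired event fails. Hence by the strong Markov property
\[
\Prob\{\tau_D<\sigma_{r,D}<\tau_\Disk\}=\E\bigl[\Prob^{B_{\tau_D}}\{\sigma_r<\tau_\Disk\}\bigr],
\]
where $\sigma_r$ is the first hitting time of $\mathcal{B}_r$ by a fresh Brownian motion. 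The function $h(z):=-\log|z|/r$ is harmonic on the open annulus, equals $1$ on $\bd\mathcal{B}_r$ and $0$ on $\bd\Disk$, so by optional stopping $\Prob^z\{\sigma_r<\tau_\Disk\}=h(z)$ for every $z$ in the closed annulus. This reduces the claim to the identity $\E[\log|B_{\tau_D}|]=\log[\Upsilon_D(0)/2]$.

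To evaluate this expectation, let $f:\Disk\to D$ be a conformal map with $f(0)=0$, so that $|f'(0)|=\Upsilon_D(0)/2$. Conformal invariance of $2$-dimensional Brownian motion, combined with Fatou's theorem for the radial-limit extension of $f$, identifies the law of $B_{\tau_D}$ with the pushforward under $f$ of uniform measure on $\bd\Disk$. Since $f$ has a simple zero at $0$ and is injective, $g(z):=f(z)/z$ is holomorphic and non-vanishing on $\Disk$, so $\log|g|$ is harmonic there, and $\log|g(e^{i\theta})|=\log|f(e^{i\theta})|$ on $\bd\Disk$. Applying the mean value property to $g$ on $\rho\Disk$ for $\rho\uparrow 1$ (the hypothesis $\inrad_D(0)>e^{-r}$ guarantees $|f(e^{i\theta})|\ge\inrad_D(0)$ a.e., which is more than enough to justify passing to the boundary) yields
\[
\E[\log|B_{\tau_D}|]=\frac1{2\pi}\int_0^{2\pi}\log|f(e^{i\theta})|\,d\theta=\log|g(0)|=\log|f'(0)|=\log[\Upsilon_D(0)/2],
\]
from which the stated formula follows.

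There is no serious obstacle; the only care required is the boundary extension of $f$ and passing the mean value property to $\bd\Disk$, both of which are classical once the inradius hypothesis provides integrability of $\log|f|$ on the unit circle.
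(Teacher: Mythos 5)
Your proof is correct and arrives at the same intermediate reduction as the paper: strong Markov at $\tau_D$, the annulus hitting probability $-\log|z|/r$, and finally the identity $\E\bigl[\log|B_{\tau_D}|\bigr] = \log[\Upsilon_D(0)/2]$.

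The one place you diverge is in how that last identity is proved. You take $f:\Disk\to D$ and transport uniform measure on $\partial\Disk$ forward, which obliges you to invoke Fatou's theorem for radial limits, the prime-end identification of harmonic measure with the pushforward of Lebesgue measure, and a dominated-convergence passage of the mean-value property from $\rho\Disk$ to the boundary circle. The paper instead takes $f:D\to\Disk$ with $f'(0)>0$ (so $-\log[\Upsilon_D(0)/2]=\log f'(0)$) and observes that $g(z)=\log[|f(z)|/|z|]$ is a \emph{bounded} harmonic function on $D$; since $|f(B_{\tau_D})|=1$ automatically (a time-changed Brownian motion in $\Disk$ exits $\Disk$), optional stopping gives $\log f'(0)=g(0)=\E[g(B_{\tau_D})]=-\E[\log|B_{\tau_D}|]$ with no boundary-regularity bookkeeping at all. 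The two arguments are the same computation viewed from either side of the conformal map; working in $D$ rather than in $\Disk$ is cleaner because it avoids ever asking what $f$ does on $\partial\Disk$.
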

\begin{pf}
Let $f\dvtx D \rightarrow\Disk$
be the conformal transformation with $f'(0) > 0$. Since $\Upsilon_D(0)$
is twice the usual conformal radius,
$- \log[\Upsilon_D(0)/2] = \log f'(0)$.
Let $g(z) = \log[|f(z)|/|z|]$ which
is a bounded harmonic function on $D$, and hence
\[
\log f'(0) = g(0) =
\E[g(B_\tau)] = - \E[{\log}|B_\tau|].
\]
For $e^{-r} \leq|w| < 1$, $-{\log}|w|/r$ is the probability that a Brownian
motion starting at $w$ hits ${\mathcal B}$ about the origin before
leaving the $\Disk$. Therefore,
\[
\log f'(0) = r \Prob\{\tau_D < \sigma_{r,D} < \tau_\Disk\}.
\]
\upqed\end{pf}
\begin{lemma}\label{BeurlingConformal}
There exists a $c>0$ such that for any two simply connected domains
$D_1 \subseteq D_2$ and a point $w \in D_1 \cap D_2$, then
\[
0 \le\Upsilon_{D_2}(w) - \Upsilon_{D_1}(w) \le c \diam(D_2
\setminus D_1).
\]
\end{lemma}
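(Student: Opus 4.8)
The plan is to get the lower bound from monotonicity of the conformal radius and the upper bound from a single potential‑theoretic identity together with two applications of the Beurling estimate. The lower bound $0\le\Upsilon_{D_2}(w)-\Upsilon_{D_1}(w)$ is immediate: letting $f_i\colon\Disk\to D_i$ be Riemann maps with $f_i(0)=w$, the composition $f_2^{-1}\circ f_1$ maps $\Disk$ into itself and fixes $0$, so $|f_1'(0)|\le|f_2'(0)|$ by the Schwarz lemma. For the upper bound I would assume $D_1\subsetneq D_2$; since the inequality is unchanged under translation and dilation I normalise $w=0$, and I may assume $\delta:=\diam(D_2\setminus D_1)<\infty$. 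Write $\rho_i=\inrad_{D_i}(0)$, so $\rho_1\le\rho_2$, and let $B$ be a Brownian motion from $0$ with exit times $\tau_{D_1}\le\tau_{D_2}$ from the two domains.

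First I would record a topological fact: because $D_1$ is simply connected, $\overline{D_2\setminus D_1}$ must meet $\partial D_2$ — otherwise $\overline{D_2\setminus D_1}$ is a compact subset of $D_2$ and $\hat\C\setminus D_1=(\hat\C\setminus D_2)\sqcup(D_2\setminus D_1)$ would exhibit the complement of $D_1$ as a disjoint union of two nonempty pieces with disjoint closures. Fixing $p_0\in\overline{D_2\setminus D_1}\cap\partial D_2$ we get $\overline{D_2\setminus D_1}\subseteq\overline{B_\delta(p_0)}$, whence also $\rho_2\le\rho_1+\delta$. The engine of the proof is the identity
\[
\log\frac{\Upsilon_{D_2}(0)}{\Upsilon_{D_1}(0)}=\E^0\!\big[g_{D_2}(B_{\tau_{D_1}};0)\big]=\E^0\!\big[g_{D_2}(B_{\tau_{D_1}};0)\,;\,\tau_{D_1}<\tau_{D_2}\big],
\]
where $g_{D_2}(\,\cdot\,;0)$ is the Green's function of $D_2$ with pole at $0$; it follows from the restriction formula $g_{D_1}(z;0)=g_{D_2}(z;0)-\E^z[g_{D_2}(B_{\tau_{D_1}};0)]$ on letting $z\to0$, using that $\log[\Upsilon_D(0)/2]$ is the finite part of $g_D(\,\cdot\,;0)$ at $0$ (the very quantity appearing in the preceding lemma), and the second equality holds because $g_{D_2}(\,\cdot\,;0)$ vanishes on $\partial D_2$ while $B_{\tau_{D_1}}\in D_2\setminus D_1$ on $\{\tau_{D_1}<\tau_{D_2}\}$.

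If $\delta\ge\rho_1/4$ there is nothing left to do: then $\rho_2\le\rho_1+\delta\le5\delta$, and $(\ref{koebe})$ gives $\Upsilon_{D_2}(0)-\Upsilon_{D_1}(0)\le\Upsilon_{D_2}(0)\le2\rho_2\le10\delta$. So assume $\delta<\rho_1/4$, hence $\rho_2<\tfrac54\rho_1$. I would then bound each factor in the expectation above by $c\sqrt{\delta/\rho_1}$. For $Z\in D_2\setminus D_1$ one has $|Z|\ge\rho_1$ (because $B_{\rho_1}(0)\subseteq D_1$) and $\dist(Z,\partial D_2)\le\delta$ (because $Z\in\overline{B_\delta(p_0)}$). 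Since $\Upsilon_{D_2}(0)\asymp\rho_2$, the Koebe distortion theorem shows $g_{D_2}(\,\cdot\,;0)$ is bounded above by a universal constant on $\partial B_{\rho_2/2}(0)$, so the maximum principle gives $g_{D_2}(Z;0)\le C\,\Prob^Z\{B[0,\tau_{D_2}]\cap\overline{B_{\rho_2/2}(0)}\ne\emptyset\}$; reaching $\overline{B_{\rho_2/2}(0)}$ from $Z$ means travelling distance $\ge|Z|-\rho_2/2\ge\rho_1/4$ away from $Z$, while $\hat\C\setminus D_2$ is a connected unbounded set lying within $\delta$ of $Z$, so the Beurling estimate bounds this probability — hence $g_{D_2}(Z;0)$ — by $c\sqrt{\delta/\rho_1}$. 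Likewise, on $\{\tau_{D_1}<\tau_{D_2}\}$ the Brownian motion enters $\overline{B_\delta(p_0)}$ before hitting $\partial D_2$, and since $|p_0|\ge\rho_2\ge\rho_1$ a second application of the Beurling estimate (compare Proposition \ref{beurling2}) gives $\Prob^0\{\tau_{D_1}<\tau_{D_2}\}\le c\sqrt{\delta/\rho_1}$. Multiplying the two bounds, $\log(\Upsilon_{D_2}(0)/\Upsilon_{D_1}(0))\le c'\delta/\rho_1\le c'/4$, so $\Upsilon_{D_2}(0)/\Upsilon_{D_1}(0)\le1+c''\delta/\rho_1$, and $\Upsilon_{D_1}(0)\le2\rho_1$ then gives $\Upsilon_{D_2}(0)-\Upsilon_{D_1}(0)\le2c''\delta$.

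The only delicate point is this last paragraph. The crude estimate $(\ref{koebe})$, $\tfrac12\inrad_{D}(0)\le\Upsilon_{D}(0)\le2\inrad_{D}(0)$, by itself gives only $\Upsilon_{D_2}(0)-\Upsilon_{D_1}(0)\le2\rho_2-\tfrac12\rho_1$, which is useless when $\rho_1$ is large and $\delta$ small; one genuinely needs the sharp bound $\log(\Upsilon_{D_2}(0)/\Upsilon_{D_1}(0))=O(\delta/\rho_1)$, and to obtain the full exponent $1$ (rather than only $\tfrac12$) one must apply the Beurling estimate to \emph{both} the size of the Green's function on $D_2\setminus D_1$ and the probability of the event $\{\tau_{D_1}<\tau_{D_2}\}$. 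Note that no step above uses boundedness of $D_2$, so the case of unbounded $D_2$ needs no separate treatment.
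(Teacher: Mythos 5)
Your proposal is correct and takes essentially the same route as the paper's proof: after disposing of the case $\diam(D_2\setminus D_1)\gtrsim \inrad_{D_1}(w)$ by the Koebe estimate, you express $\log\bigl(\Upsilon_{D_2}(w)/\Upsilon_{D_1}(w)\bigr)$ as a Brownian-motion quantity (your Green's-function restriction identity is just the analytic form of the preceding lemma used in the paper) and then apply the Beurling estimate twice---once for hitting $D_2\setminus D_1$, once for returning near $w$ before leaving $D_2$---to get the product of two square-root factors. Keeping the scale $\rho_1$ explicit instead of normalizing $\inrad(D_2)=1$ is only a cosmetic difference.
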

\begin{pf} Without loss of generality, we may assume $\inrad(D_2) = 1$.
If $\inrad(D_1) \leq7/8$, then $\diam(D_2 \setminus D_1)
\geq1/8$, and
we can use the estimate $\inrad(D) \asymp\Upsilon(D)$. If $\inrad
(D_1) \geq7/8$, then
we can use the previous lemma, conformal invariance, and
the Koebe $(1/4)$-theorem to see $\Upsilon_{D_2}(w)
- \Upsilon_{D_1}(w)$ is comparable to the probability that a Brownian
motion starting at
$w$ hits $D_2 \setminus D_1$ and returns to ${\mathcal B} = B_{1/16}(w)$,
the disk of radius $1/16$ about $w$ without
leaving $D_2$. Using the Beurling estimate, we see the probability of hitting
$D_2 \setminus D_1$ is bounded above by $c \diam(D_2\setminus
D_1)^{1/2}$ and
using it again the probability of getting back to ${\mathcal B}$ before
leaving $D_2$ is
bounded by $c \diam(D_2\setminus D_1)^{1/2}$.
\end{pf}

We will need some notion of closeness of two nested domains before we
can state our lemma. Although the following definitions are very
general, we will use them only in the case where the domains are the
complements of a single curve considered up to two different times.
\begin{definition*}
Given two nested simply connected domains $D_1 \subseteq D_2 \subseteq
\Half$ with marked boundary points $z_1 \in\bd D_1$ and $z_2 \in\bd
D_2$, we say $(D_1,z_1)$ and $(D_2,z_2)$ are \textit{$R$-close near $z$}
if the following holds. Let $B_R^{(i)}(z)$ denote the connected
component of $B_R(z) \cap D_i$ which contains $z$. Then:
\begin{itemize}
\item$z_1 \in\bd B_R^{(1)}(z)$,
\item$z_2 \in\bd B_R^{(2)}(z)$ and
\item$D_2 \setminus D_1 \subseteq B_R(z)$.
\end{itemize}
\end{definition*}
%
%
\begin{lemma}\label{GreensLemma}
There exists $c > 0$ such that the following holds. Suppose $z,w \in
\Half$, $D_1 \subseteq D_2 \subseteq\Half$ are simply connected
domains, and $z_1 \in\bd D_1$, $z_2 \in\bd D_2$. If:
\begin{itemize}
\item$z,w \in D_1 \cap D_2$,
\item$(D_1,z_1)$ and $(D_2, z_2)$ are $R$-close near $z$ for $R \le
\inrad_{D_1}(w) \wedge\frac{1}{2}|z-w|$,
\item$\infty\in\bd D_1 \cap\bd D_2$,\vadjust{\goodbreak}
\end{itemize}
then
\[
|G_{D_1}(w;z_1,\infty) - G_{D_2}(w;z_2,\infty)| \le c \inrad
_{D_1}(w)^{d-2-({\beta\wedge1})/{2}} R^{({\beta\wedge1})/{2}}.
\]
\end{lemma}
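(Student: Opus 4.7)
The plan is to decompose $G_{D_i}(w;z_i,\infty)=\Upsilon_{D_i}(w)^{d-2}S_{D_i}(w;z_i,\infty)^\beta$ and estimate the two factors separately, then recombine via the product identity $|a_1b_1-a_2b_2|\le|a_1-a_2|\,b_1+a_2|b_1-b_2|$ with $a_i=\Upsilon_{D_i}(w)^{d-2}$ and $b_i=S_{D_i}(w;z_i,\infty)^\beta$. All ingredients are already in place: Lemma \ref{BeurlingConformal} will handle the conformal radius factor and the Beurling estimate (Proposition \ref{beurling2}) will handle the angular factor.

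First I record that $\Upsilon_{D_1}(w)\asymp\Upsilon_{D_2}(w)\asymp\inrad_{D_1}(w)$. Indeed, (\ref{koebe}) gives $\Upsilon_{D_i}(w)\asymp\inrad_{D_i}(w)$, and $\inrad_{D_1}(w)\le\inrad_{D_2}(w)\le\inrad_{D_1}(w)+\diam(D_2\setminus D_1)\le\inrad_{D_1}(w)+2R\le 3\inrad_{D_1}(w)$. Lemma \ref{BeurlingConformal} applied with $\diam(D_2\setminus D_1)\le 2R$ then yields $0\le\Upsilon_{D_2}(w)-\Upsilon_{D_1}(w)\le cR$. Since $d-2<0$, the mean value theorem applied to $x\mapsto x^{d-2}$ on an interval of values comparable to $\inrad_{D_1}(w)$ gives
\[
\bigl|\Upsilon_{D_1}(w)^{d-2}-\Upsilon_{D_2}(w)^{d-2}\bigr|\le c\,\inrad_{D_1}(w)^{d-3}R.
\]

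For the angular factor, set $\theta_i=\arg\Phi_{D_i}(w)\in(0,\pi)$, so $S_{D_i}(w;z_i,\infty)=\sin\theta_i$ and $\theta_i/\pi$ is the harmonic measure from $w$ of one prescribed arc of $\partial D_i$. Because $D_2\setminus D_1\subseteq B_R(z)$ and $\{z_1,z_2\}\subseteq\overline{B_R(z)}$, the boundaries $\partial D_1$ and $\partial D_2$ coincide outside $\overline{B_R(z)}$, and I can label their arcs consistently to agree there. A Brownian motion from $w$ that does not enter $B_R(z)$ before exiting $D_1\cup D_2$ then exits $D_1$ and $D_2$ at a common point on a commonly-labelled arc, so
\[
|\theta_1-\theta_2|\le\pi\,\Prob^w\bigl\{B[0,\tau]\cap B_R(z)\ne\emptyset\bigr\},
\]
where $\tau$ is the exit time from $D_1\cup D_2$. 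Proposition \ref{beurling2} bounds the right side by $c(R/|z-w|)^{1/2}$; combining the hypotheses $R\le\inrad_{D_1}(w)$ and $R\le|z-w|/2$ with $\dist(z,\partial D_1)\le R$ (which follows from $z_1\in\overline{B_R(z)}\cap\partial D_1$) forces $|z-w|\ge\inrad_{D_1}(w)/2$, so $|\theta_1-\theta_2|\le c(R/\inrad_{D_1}(w))^{1/2}$. Applying either the mean value theorem to $\sin^\beta$ on $[0,1]$ (when $\beta\ge 1$) or the subadditive estimate $|a^\beta-b^\beta|\le|a-b|^\beta$ (when $\beta<1$) gives
\[
\bigl|S_{D_1}^\beta-S_{D_2}^\beta\bigr|\le c\,|\theta_1-\theta_2|^{\beta\wedge 1}\le c\,(R/\inrad_{D_1}(w))^{(\beta\wedge 1)/2}.
\]

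Assembling the decomposition with $S_{D_1}^\beta\le 1$ and $\Upsilon_{D_2}(w)^{d-2}\le c\,\inrad_{D_1}(w)^{d-2}$ yields two terms, of orders $c\,\inrad_{D_1}(w)^{d-3}R$ and $c\,\inrad_{D_1}(w)^{d-2-(\beta\wedge 1)/2}R^{(\beta\wedge 1)/2}$. The first is absorbed into the second since $R\le\inrad_{D_1}(w)$ gives $(R/\inrad_{D_1}(w))^{1-(\beta\wedge 1)/2}\le 1$, producing the claimed bound. The main obstacle I anticipate is the topological arc-matching step: one must justify that simple-connectedness of both domains, together with $\infty\in\partial D_1\cap\partial D_2$ and $\{z_1,z_2\}\subseteq\overline{B_R(z)}$, genuinely allows the two boundary arcs of $\partial D_1$ and $\partial D_2$ to be paired so that the pairing respects the common portion outside $\overline{B_R(z)}$. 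The dichotomy $\beta<1$ versus $\beta\ge 1$ is otherwise routine bookkeeping.
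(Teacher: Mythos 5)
Your proposal is correct and follows essentially the same route as the paper: decompose $G_D=\Upsilon_D^{d-2}S_D^\beta$, control the conformal-radius factor with Lemma \ref{BeurlingConformal} plus the mean value theorem, control the angular factor by expressing $\arg\Phi_{D_i}(w)$ as a harmonic-measure probability and applying the Beurling estimate (Proposition \ref{beurling2}), observe $\inrad_{D_1}(w)\lesssim|z-w|$ from the $R$-closeness hypothesis, split on $\beta\lessgtr 1$, and absorb the weaker $\Upsilon$-term using $R\le\inrad_{D_1}(w)$. The ``topological arc-matching'' you flag as the potential obstacle is exactly what the paper's definition of $R$-closeness is engineered to deliver (its first two bullets put $z_1,z_2$ on $\bd\overline{B_R(z)}$, and the third keeps $D_2\setminus D_1$ inside $B_R(z)$, yielding equation \eqref{bdryChange} directly), so there is no gap there.
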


One need not fix the point $z$ in the beginning of this lemma by simply
making the second bullet point of this lemma say that there exists some
$z$ so that the domains are $R$-close near $z$; however we write it in
this form since we will always use this lemma with a fixed $z$ and $w$
already in mind.
\begin{pf*}{Proof of Lemma~\ref{GreensLemma}}
Recall that
\[
G_D(w;z_1,z_2) = \Upsilon_D(w)^{d-2} S_D(w;z_1,z_2)^\beta,
\]
where $S(w;z_1,z_2)$ is the sine of the argument of $w$ after applying
the unique (up to scaling) conformal map, $\Phi_D$, that sends $D$ to
$\Half$ while sending $z_1$ to $0$ and $z_2$ to~$\infty$. Writing, as before,
\[
\bd D = \bd_1 D \cup\{z_1\} \cup\bd_2 D \cup\{z_2\},
\]
where the union is written in counter-clockwise order, this argument is
conformally invariant and can be computed by
\[
\arg\Phi_D(w) = \pi\cdot\Prob^w\{ B_\tau\in\bd_2 D\} \qquad\mbox{where }
\tau
= \inf\{t \dvtx B_t \in\bd D\},
\]
where $\Prob^w$ is the probability for a standard Brownian motion
started at $w$.

Consider our case. Write
\[
\bd D_1 = \bd_1 D_1 \cup\{z_1\} \cup\bd_2 D_1 \cup\{\infty\}
\quad\mbox{and}\quad
\bd D_2 = \bd_1 D_2 \cup\{z_2\} \cup\bd_2 D_2 \cup\{\infty\}
\]
again with the union written in counter-clockwise order. Note that the
condition that $(D_1,z_1)$ and $(D_2, z_2)$ are $R$-close near $z$
implies that
%
\begin{equation}\label{bdryChange}\quad
\bd_1 D_1 \setminus B_R(z) = \bd_1 D_2 \setminus B_R(z)
\quad\mbox{and}\quad
\bd_2 D_1
\setminus B_R(z) = \bd_2 D_2 \setminus B_R(z).
\end{equation}
Define
\[
\tau_1 = \inf\{t \dvtx B_t \in\bd D_1\} \quad\mbox{and}\quad \tau_2 = \inf\{t \dvtx B_t
\in
\bd D_2\}
\]
and note that, since $B_0 = w$, $\tau_1 \le\tau_2$.

We may write that
\begin{eqnarray*}
| {\arg\Phi_{D_1}(w)} - \arg\Phi_{D_2}(w)| & = & |\pi\cdot\Prob^w\{
B_{\tau_1} \in\bd_2 D_1\} - \pi\cdot\Prob^w\{ B_{\tau_2} \in\bd_2
D_2\}|\\
& \le & 2\pi\cdot\Prob^w\{B_t \in B_R(z) \mbox{ for some } t \le
\tau
_2\},
\end{eqnarray*}
where the last line follows since, if considered path-wise, the
Brownian motion must enter $B_R(z)$ if it is to hit a different side of
the boundary in $D_1$ versus $D_2$ by~(\ref{bdryChange}). By
the Beurling estimate (Proposition~\ref{beurling2}),
\[
| {\arg\Phi_{D_1}(w)} - \arg\Phi_{D_2}(w)| \le c \biggl(\frac{R}{|z-w|}\biggr)^{1/2}.
\]
By noting that $\inrad_{D_1}(w) \le c |z-w|$ by the choice of $R$ and
the definition of $R$-close, and splitting into the cases when $\beta
\ge1$ versus $\beta< 1$ we see
\[
| S_{D_1}(w;z_1,\infty)^\beta- S_{D_2}(w;z_2,\infty)^\beta| \le c
\biggl(\frac{R}{\inrad_{D_1}(w)}\biggr)^{(\beta\wedge1)/2}.
\]

Consider the term involving the conformal radius. By using Lemma \ref
{BeurlingConformal} and recalling that $d-2 < 0$ and $\Upsilon_{D_1}(w)
\le\Upsilon_{D_2}(w)$, we see
\begin{eqnarray*}
| \Upsilon_{D_2}(w)^{d-2} - \Upsilon_{D_1}(w)^{d-2} | & \le & (2-d)
\Upsilon_{D_1}(w)^{d-3} |\Upsilon_{D_2}(w) - \Upsilon_{D_1}(w)| \\
& \le & c \Upsilon_{D_1}(w)^{d-2} \biggl(\frac{R}{\inrad_{D_1}(w)}\biggr).
\end{eqnarray*}

Combining these, noting that $R < \inrad_{D_1}(w)$, gives
\begin{eqnarray*}
&&|G_{D_1}(w;z_1,\infty) - G_{D_2}(w;z_2,\infty)| \\
&&\qquad \le |\Upsilon_{D_1}(w)^{d-2}S_{D_1}(w;z_1,\infty)^\beta- \Upsilon
_{D_1}(w)^{d-2}S_{D_2}(w;z_2,\infty)^\beta| \\
&&\qquad\quad{} + |\Upsilon_{D_1}(w)^{d-2}S_{D_2}(w;z_2,\infty)^\beta-\Upsilon
_{D_2}(w)^{d-2}S_{D_2}(w;z_2,\infty)^\beta| \\
&&\qquad \le c \Upsilon_{D_1}(w)^{d-2} \biggl(\frac{R}{\inrad_{D_1}(w)}\biggr)^{(\beta
\wedge1)/2} + c \Upsilon_{D_1}(w)^{d-2} \biggl(\frac{R}{\inrad_{D_1}(w)}\biggr)
\\
&&\qquad \le c \inrad_{D_1}(w)^{d-2-({\beta\wedge1})/{2}} R^{
({\beta
\wedge1})/{2}}
\end{eqnarray*}
as desired.
\end{pf*}


\subsection{Schramm--Loewner evolution}\label{sle}

The chordal Schramm--Loewner evolution with parameter
$\kappa$ (from $0$ to $\infty$
in $\Half$ parametrized so that the half-plane capacity
grows at rate $a=2/\kappa$)
is the random curve $\gamma\dvtx[0,\infty) \rightarrow\overline
\Half$ with $\gamma(0) = 0$ satisfying the following. Let $H_t$
denote the unbounded component of $\Half\setminus\gamma(0,t]$,
and let $g_t$ be the unique conformal transformation of $H_t$
onto $\Half$ with $g_t(z) - z \rightarrow0$ as $z \rightarrow
\infty$. Then $g_t$ satisfies the Loewner differential
equation
%
\begin{equation} \label{chordaleq}
\partial_tg_t(z) = \frac{a}{g_t(z) - U_t},\qquad
g_0(z) = z ,
\end{equation}
where $U_t = -B_t$ is a standard Brownian motion. For
$z \in\overline\Half\setminus\{0\}$, the solution of this
initial value problem exists up to time $T_z \in(0,\infty]$.

Suppose $z \in\Half$, and let
\[
Z_t = Z_t(z) = X_t + i Y_t = g_t(z) - U_t .
\]
Then the Loewner differential equation becomes the
SDE
%
\begin{equation} \label{ZtDef}
dZ_t = \frac{a}{Z_t} \,dt + d B_t .
\end{equation}
Let
\begin{eqnarray*}
S_t &=& S_t(z) = S_{H_t}(z;\gamma(t),\infty) =
\sin\arg Z_t,
\\
\Upsilon_t &=& \Upsilon_t(z) = \Upsilon_{H_t}(z;\gamma(t),\infty)
= \frac{Y_t}{|g_t'(z)|},
\\
M_t &=& M_t(z) = G_{H_t}(z;\gamma(t),\infty)
= \Upsilon_t^{d-2} S_t^{\beta}.
\end{eqnarray*}
Either by direct computation or by using the Schwarz lemma, we
can see that $\Upsilon_t$ decreases in $t$, and hence we can define
$\Upsilon= \Upsilon_{T_z-}$. If $0 < \kappa\leq4$, the $\mbox{SLE}$ paths
are simple and with probability one $T_z = \infty$. If $4 < \kappa< 8$,
$T_z < \infty$ and by
(\ref{koebe}) we know
%
\begin{equation} \label{koebe2}
\Upsilon\asymp\dist\bigl[z,\gamma(0,T_z] \cup\R\bigr]
= \dist[z,\gamma(0,\infty) \cup\R].
\end{equation}
Using It\^o's formula, we can see that $M_t$ is a local
martingale satisfying
\[
dM_t = \frac{a X_t} {X_t^2 + Y_t^2}
M_t \,dB_t.
\]

We will need the following estimate for $\mbox{SLE}$; see~\cite{AK}
for a proof. By a \textit{crosscut} in $D$ we will mean a simple curve
$\eta\dvtx(0,1) \rightarrow D$ with $\eta(0^+), \eta(1^-) \in
\partial D$. We call $\eta(0^+),\eta(1^-)$ the endpoints of the crosscut.
\begin{proposition} \label{julprop}
There exists $c < \infty$ such that if
$\eta$ is a crosscut in $\Half$ with $-\infty<
\eta(1^-) \leq\eta(0^+)=-1$,
then the probability that an $\mbox{SLE}_\kappa$ curve
from $0$ to $\infty$ intersects $\eta$ is bounded above by
$c \diam(\eta)^\beta$ where $\beta= 4a-1$
is as defined in Section~\ref{notations}.
\end{proposition}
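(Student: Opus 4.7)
The plan is to reduce the proposition to the one-point boundary estimate for chordal SLE: for any fixed boundary point $x_0 \in \R \setminus \{0\}$, $\Prob\{\gamma[0,\infty) \cap B_r(x_0) \ne \emptyset\} \le c\, r^\beta$ for small $r$. Once this is in hand, the proposition follows immediately. Since $-1 = \eta(0^+) \in \eta$ and $\eta$ has diameter $r := \diam(\eta)$, the crosscut is contained in $\overline{B_r(-1)}$, so any SLE curve meeting $\eta$ must enter $B_r(-1)$, yielding
\[
\Prob\{\gamma \cap \eta \ne \emptyset\} \le \Prob\{\gamma[0,\infty) \cap B_r(-1) \ne \emptyset\} \le c\, r^\beta.
\]

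To prove the one-point estimate, I would apply the nonnegative local martingale $M_t(z)$ of Section \ref{sle} at the interior point $z_0 := -1 + 2ir$, which sits just above the boundary at distance $\asymp r$ from $-1$. A direct computation gives $M_0(z_0) \asymp r^{d-2+\beta}$, using $\Upsilon_\Half(z_0) = 2r$ and $\sin \arg(z_0) \asymp r$. Let $\tau := \inf\{t : \gamma(t) \in B_r(-1)\}$. On $\{\tau < \infty\}$ the Koebe bound \eqref{koebe} gives $\Upsilon_\tau(z_0) \le 2\, |z_0 - \gamma(\tau)| \lesssim r$, and hence (since $d - 2 < 0$) $\Upsilon_\tau(z_0)^{d-2} \gtrsim r^{d-2}$. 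Since $M_t(z_0)$ is a nonnegative supermartingale, optional stopping yields $\E[M_\tau\, \mathbf{1}_{\tau<\infty}] \le M_0(z_0) \asymp r^{d-2+\beta}$, and combining with $M_\tau \gtrsim r^{d-2}\, S_\tau(z_0)^\beta$ on $\{\tau<\infty\}$ gives the partial bound
\[
\E\bigl[S_\tau(z_0)^\beta\, \mathbf{1}_{\tau<\infty}\bigr] \lesssim r^\beta.
\]

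The technical heart of the argument — and the main obstacle — is upgrading this bound on $\E[S_\tau^\beta \mathbf{1}_{\tau<\infty}]$ to a bound on $\Prob\{\tau < \infty\}$ itself. This requires ruling out, or separately estimating, the configurations in which $\gamma[0,\tau]$ winds tightly around $z_0$ so as to make $S_\tau(z_0)$ very small. My plan is to decompose $\{\tau < \infty\}$ according to whether $\inf_{t\in[0,\tau]} S_t(z_0) \ge \theta$ for a fixed threshold $\theta > 0$: the good event is handled by the martingale estimate above, while the bad event is controlled by a Beurling-type argument (Proposition \ref{beurling2}) that bounds the probability of the Brownian passage required for $S_t(z_0)$ to depress that low and for the curve to subsequently reach $B_r(-1)$. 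An alternative route avoids the interior decomposition entirely by using a boundary martingale of the form $|X_t|^\beta\, g_t'(-1)^\mu$ — with $X_t := g_t(-1) - U_t$ satisfying a Bessel-type SDE and $\mu$ fixed by It\^o's drift-cancellation condition — and invoking boundary Koebe distortion to lower-bound the martingale directly on $\{\tau<\infty\}$.
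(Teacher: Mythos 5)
The paper does not itself prove Proposition~\ref{julprop}: it cites \cite{AK} (Alberts--Kozdron), so there is no in-paper proof to compare against. Your reduction to a one-point boundary estimate at $x_0=-1$ is correct and is exactly the right first step, since $\eta\subseteq\overline{B_{\mathrm{diam}(\eta)}(-1)}$. But both of your proposed routes to the one-point estimate have gaps.

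Route 1 as written cannot work. You place $z_0=-1+2ir$ and correctly compute $S_0(z_0)\asymp r$, but then propose to split $\{\tau<\infty\}$ on $\inf_{t\le\tau}S_t(z_0)\ge\theta$ for a \emph{fixed} $\theta$. Since $\inf_{t\le\tau}S_t(z_0)\le S_0(z_0)\asymp r$, for $r<\theta$ the ``good'' event is empty and the decomposition is vacuous. More structurally, the interior local martingale $M_t(z_0)$ is adapted to the interior exponent and cannot produce the boundary exponent cleanly here: the crude one-point bound of Lemma~\ref{july17.3} applied at $z_0$ only yields $O(r^{\beta/2})$, not $O(r^\beta)$, because it costs a square root to pass from harmonic measure ($\hat S_D$) to the argument factor $S_D$. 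The extra factor of $r^{\beta/2}$ you need comes precisely from the geometry near the boundary, which the interior martingale does not see.

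Route 2 is the right framework, but the exponent sign is reversed. With $X_t=g_t(-1)-U_t$ and $Y_t=g_t'(-1)$, the drift-cancellation condition for $N_t=|X_t|^pY_t^q$ is $q=p+\tfrac{p(p-1)}{2a}$; choosing $p=\beta$ gives a nonnegative local martingale that is \emph{small} on the hitting event (since $|X_\tau|/Y_\tau\lesssim r$ and $Y_\tau\le1$), which is useless for optional stopping. You need $p=-\beta$, for which $q=\beta$, yielding $N_t=(|X_t|/Y_t)^{-\beta}$ with $N_0=1$. The boundary Koebe comparison $|X_t|/Y_t\asymp\dist(-1,\gamma[0,t]\cup[0,\infty))$ then gives $N_{\tau_r}\gtrsim r^{-\beta}$ on $\{\tau_r<\infty\}$, and the supermartingale/Fatou inequality $\E[N_{\tau_r}\mathbf 1\{\tau_r<\infty\}]\le N_0=1$ gives $\Prob\{\tau_r<\infty\}\lesssim r^\beta$. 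Two points need care that your sketch omits: (i) for $4<\kappa<8$, the point $-1$ is swallowed at a finite time $T_{-1}$ and $N_t\to\infty$ as $t\to T_{-1}^-$, so you must argue that once $-1$ is swallowed the curve can no longer approach it, hence $\{\tau_r<\infty\}=\{\tau_r<T_{-1}\}$ and the Fatou argument closes; (ii) one should verify the claimed two-sided boundary Koebe comparison rather than invoking it informally, since the whole estimate hinges on it. With these corrections, Route 2 is essentially the standard proof of the boundary one-point exponent and does the job.
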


\subsection{Radial parametrization}\label{radial}

In order to prove the existence of multi-point Green's functions, we
will need to study the behavior of the $\mbox{SLE}$ curve from the perspective
of $z \in\Half$. To do so, it is useful to parametrize
the curve so that the conformal radius seen from $z$
decays deterministically. We fix $z \in\Half$ and let
\[
\sigma(t) = \inf\{s\dvtx \Upsilon_s = e^{-2at}
\}.
\]
Under this\vspace*{1pt} parametrization, the ``starting time'' is $-\log(\Upsilon
_0)/2a$, and the total lifetime of the
curve is $\log(\Upsilon_0/\Upsilon)/2a$.
Let $\Theta_t = \arg Z_{\sigma(t)}(z),
\hat S_t = S_{\sigma(t)}(z) = \sin\Theta_t$.
Using It\^o's formula
one can see that $\Theta_t$ satisfies
\[
d \Theta_t = (1-2a) \cot\Theta_t
\,dt + d\hat W_t,
\]
where $\hat W_t$ is a standard Brownian motion. Since $a > 1/4$,
comparison to a Bessel process shows that solutions to
this process leave $(0,\pi)$ in finite time. This reflects
that fact that chordal $\mbox{SLE}_\kappa$ does not reach $z$ for
$\kappa< 8$ and hence $\Upsilon>0$.
Let
\[
\hat M_t = M_{\sigma(t)}(z) = e^{-2at(d-2)}
\hat S_t^\beta= e^{-(2a - 1/2)t}
\hat S_t^\beta.
\]
This is a time change of a local martingale and hence is
a local martingale; indeed, It\^o's formula gives
\[
d\hat M_t = (4a-1) \cot\Theta_t
\,d\hat W_t.
\]
Using Girsanov's theorem (see, e.g.,~\cite{karatzas}), we can define a
new probability
measure $\Prob^*$ which corresponds to paths ``weighted
locally by the local martingale $\hat M_t$.'' For the time being, we
treat this as an arbitrary change of measure; however, in
Section~\ref{twosided} we will see that is precisely the change of
measure which gives two-sided radial $\mbox{SLE}$.
Intuitively, $\hat M_t$ weights more heavily those paths
whose continuations are likely to get much closer to
$z$. For more examples of the application of Girsanov's theorem to the
study of $\mbox{SLE}$, and a general outline of the way Girsanov's theorem is
used below, see~\cite{Buzios}.\looseness=-1

In this weighting,
\[
d\hat W_t =
(4a-1) \cot\Theta_t \,dt + dW_t ,
\]
where $W_t$ is a standard Brownian motion with respect to $\Prob^*$.
In particular,
%
\begin{equation} \label{sde1}
d\Theta_t = 2a \cot\Theta_t \,dt + dW_t.
\end{equation}
Since $2a > 1/2$, we can\vspace*{1pt} see by comparison with a Bessel process that
with respect to
$ \Prob^*$, the process stays in $(0,\pi)$ for all
times. Using this we can show that $ \hat M_t$ is actually
a martingale, and the measure $ \Prob^*$ can be defined by\looseness=-1
\[
\Prob^*[V] = \hat M_0^{-1}
\E[\hat M_t 1_V ] \qquad\mbox{for }
V \in\F_t,
\]\looseness=0
where $\F_t$ denotes the $\sigma$-algebra generated by
$\{\hat W_s\dvtx 0 \leq s \leq t \}$.
Much of the analysis of $\mbox{SLE}_\kappa$ as it gets close to
$z$ uses properties of the simple SDE (\ref{sde1}).
Recall that we assume that $a > 1/4$ and all
constants can depend on $a$.
\begin{lemma} \label{mar3lemma1}
There exists $c < \infty$ such that if
$\Theta_t$ satisfies (\ref{sde1}) with $\Theta_0
= x \in(0,\pi/2)$, then if $0 < y < 1$ and
\[
\tau= \inf\bigl\{t\dvtx \Theta_t \in\{y,\pi/2\}
\bigr\},
\]
then
\[
\Prob^*\{\Theta_\tau= y\} \leq c (y/x)^{1-4a} .
\]
\end{lemma}
\begin{pf} It\^o's formula shows that $F(\Theta_{
t \wedge\tau})$ is a $\Prob^*$-martingale where
\[
F(s) = \int_s^{\pi/2} (\sin u)^{-4a} \,du,\qquad
\frac{F''(s)}{F'(s)} = -4a \cot s.
\]
Note that $F(\pi/2)
= 0$ and
\[
F(s) \sim
\frac{s^{1-4a}}{1-4a} ,\qquad
s \rightarrow0^+ .
\]
The optional sampling theorem implies that
\[
F(x) = \Prob^*\{\Theta_\tau= y\} F(y).
\]
\upqed\end{pf}
\begin{lemma} The invariant density for the SDE (\ref{sde1})
is
%
\begin{equation}
f(x) = C_{4a} \sin^{4a} x ,\qquad 0 < x < \pi,\qquad
C_{4a} := \biggl[\int_0^\pi\sin^{4a} x
\biggr]^{-1}.\vspace*{-2pt}
\end{equation}
\end{lemma}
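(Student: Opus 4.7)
The plan is to use the standard one-dimensional Fokker--Planck (Kolmogorov forward) approach: an invariant density $f$ for a diffusion $dX_t = b(X_t)\,dt + dW_t$ on an interval must satisfy $\mathcal{L}^* f = 0$, where $\mathcal{L}^* g = \tfrac{1}{2}g'' - (bg)'$ is the formal adjoint of the generator $\mathcal{L} = \tfrac{1}{2}\partial_x^2 + b(x)\partial_x$. Here $b(x) = 2a\cot x$ on $(0,\pi)$.

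First I would write out the stationary equation $\tfrac{1}{2}f'' - (2a\cot x \cdot f)' = 0$ and integrate once in $x$ to obtain
\[
\tfrac{1}{2}f'(x) - 2a\cot x \cdot f(x) = J,
\]
where $J$ is a constant representing the probability current. Since (as already observed in the paragraph preceding the lemma) under $\Prob^*$ the process stays in $(0,\pi)$ for all times --- $0$ and $\pi$ are entrance-type boundaries for the Bessel-like SDE since $2a > 1/2$ --- any invariant probability measure must give zero net flux through either endpoint, so $J = 0$. This leaves the separable first-order ODE
\[
\frac{f'(x)}{f(x)} = 4a\cot x = 4a\,\frac{(\sin x)'}{\sin x},
\]
whose solution is $f(x) = C\sin^{4a} x$. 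Normalizing so that $\int_0^\pi f = 1$ produces the claimed constant $C_{4a} = \left[\int_0^\pi \sin^{4a} x\, dx\right]^{-1}$, which is finite since $4a > 1 > -1$.

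Alternatively, and perhaps more transparently in a paper aimed at readers who may not want to invoke the full machinery of invariant measures, I would simply verify the identity $\mathcal{L}^*(\sin^{4a} x) = 0$ by a direct computation: differentiate $2a\cot x \cdot \sin^{4a} x = 2a\cos x \sin^{4a-1} x$ to get $-2a\sin^{4a}x + 2a(4a-1)\cos^2 x \sin^{4a-2}x$, compute $\tfrac{1}{2}(\sin^{4a} x)'' = 2a(4a-1)\cos^2 x \sin^{4a-2} x - 2a \sin^{4a}x$, and observe that the difference is zero after using $\cos^2 x = 1 - \sin^2 x$. Uniqueness of the invariant probability density for a one-dimensional ergodic diffusion on an interval (which follows from irreducibility and the explicit form of the scale and speed measures) then identifies this as the unique invariant density.

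The only potential obstacle is the justification that the probability current vanishes; this is where one needs the behavior of the SDE near the endpoints. Since $b(x) \sim 2a/x$ near $0$ and $2a > 1/2$, the endpoint $0$ is an entrance boundary in Feller's classification, and symmetrically for $\pi$. Because the paper is already invoking Bessel-process comparisons to handle boundary behavior in the surrounding discussion, this point can be dispatched by a one-sentence reference. The direct verification route sidesteps the issue entirely, at the cost of needing the uniqueness statement separately.
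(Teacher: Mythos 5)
Your proposal is correct. The paper in fact offers no proof here—it simply states ``This can be quickly verified and is left to the reader''—so there is nothing to compare against; your direct verification that $\tfrac{1}{2}(\sin^{4a}x)'' = \bigl(2a\cot x\cdot\sin^{4a}x\bigr)'$ is almost certainly the ``quick verification'' the authors had in mind, and the Fokker--Planck route with zero current at the entrance boundaries $0$ and $\pi$ is the standard way to see both existence and uniqueness. One tiny nit: in the direct computation the two expanded expressions already cancel term by term, so you do not actually need to invoke $\cos^2 x = 1-\sin^2 x$.
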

\begin{pf} This can be quickly verified and
is left to the reader.\vspace*{-2pt}
\end{pf}

One can use standard techniques for one-dimensional
diffusions to discuss the rate of convergence to the
equilibrium distribution. We will state
the one result that we need; see~\cite{nat} for more details.
If $F$ is a nonnegative function on $(0,\pi)$, let
\[
I_F:= C_{4a}
\int_0^{\pi} F(x) \sin^{4a} x \,dx .\vspace*{-2pt}
\]

\begin{lemma}\label{invdense} There exists $u < \infty$ such that for
every $t_0 > 0$ there exists $c < \infty$ such that if $F$ is
a nonnegative function with $I_F < \infty$ and
$t \geq t_0$,
\[
| \E[F(\Theta_t)] - I_F|
\leq c e^{-ut} I_F.\vspace*{-2pt}
\]
\end{lemma}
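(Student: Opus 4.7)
The strategy is to reformulate the lemma as a uniform estimate on the transition density $p_t(x,\cdot)$ of $\Theta$ under $\Prob^*$. Since $F \ge 0$ gives $I_F = \int_0^\pi F(y)\, f(y)\, dy$ and
\[
\E[F(\Theta_t)] - I_F = \int_0^\pi \left(\frac{p_t(x,y)}{f(y)} - 1\right) F(y)\, f(y) \, dy,
\]
the lemma will follow from the sharper statement
\[
\sup_{x,y \in (0,\pi)} \left|\frac{p_t(x,y)}{f(y)} - 1\right| \le c\, e^{-ut} \quad (t \ge t_0).
\]
I would obtain this from two standard ingredients, a spectral gap and an ultracontractivity bound, both taken relative to the invariant measure $f\,dy$.

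A direct computation writes the generator of \eqref{sde1} in divergence form $Lg = \frac{1}{2f}(fg')'$, so $L$ is self-adjoint and nonpositive on $L^2((0,\pi), f\,dy)$ and $P_t = e^{tL}$ is self-adjoint there. Because $f(y) \asymp y^{4a}(\pi-y)^{4a}$, the standard one-dimensional Muckenhoupt/Hardy criterion (which reduces to checking that $x \mapsto \int_0^x f(y)\,dy \cdot \int_x^{\pi/2} 1/f(y)\,dy$ stays bounded as $x \to 0^+$, and symmetrically at the right endpoint --- both hold since $4a > 1$) yields a Poincar\'e inequality, hence a spectral gap $u > 0$ and
\[
\|P_t F - I_F\|_{L^2(f)} \le e^{-ut} \, \|F - I_F\|_{L^2(f)}.
\]
The ultracontractivity estimate asserts that for each $s > 0$ there is $K(s) < \infty$ with $p_s(x,y) \le K(s)\, f(y)$ uniformly in $x,y \in (0,\pi)$. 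Away from the boundary this follows from interior parabolic regularity applied to the Fokker--Planck equation; near $y \in \{0,\pi\}$, where the drift $2a\cot$ is strongly repulsive, one argues by a Bessel comparison (equation \eqref{sde1} near $0$ looks like $d\Theta = (2a/\Theta)\,dt + dW$, a Bessel process of dimension $4a+1$ whose heat kernel decays like $y^{4a}$) to see that the density matches the invariant $f$ up to constants.

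Given these two ingredients, I conclude by a standard sandwich: for $t \ge t_0$, factor $P_t = P_{t_0/2}\, P_{t - t_0}\, P_{t_0/2}$ and observe that by Cauchy--Schwarz and reversibility the outer factors both satisfy $\|P_{t_0/2}\|_{L^1(f) \to L^2(f)}$, $\|P_{t_0/2}\|_{L^2(f) \to L^\infty} \le K(t_0)^{1/2}$, while the middle factor contracts on the mean-zero subspace of $L^2(f)$ by a factor of $e^{-u(t - t_0)}$. Hence
\[
\|P_t(F - I_F)\|_\infty \le K(t_0)\, e^{-u(t - t_0)}\, \|F - I_F\|_{L^1(f)} \le c\, e^{-ut}\, I_F,
\]
using $F \ge 0$ so $\|F - I_F\|_{L^1(f)} \le 2 I_F$. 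The main obstacle is the ultracontractivity step: the drift in \eqref{sde1} is singular at $0$ and $\pi$, so interior parabolic estimates are not enough, and one must carry out the Bessel comparison carefully to show that the heat-kernel decay at the boundary matches the $y^{4a}$ scaling of $f$ uniformly in the starting point $x$.
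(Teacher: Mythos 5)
The paper does not actually prove Lemma~\ref{invdense}: it states the result and refers the reader to \cite{nat} with the remark that ``one can use standard techniques for one-dimensional diffusions.'' Your proposal therefore has to be judged on its own merits, and as an instance of the standard semigroup/spectral approach it is essentially sound. Reformulating the claim as $\sup_{x,y}\bigl|p_t(x,y)/f(y)-1\bigr|\le ce^{-ut}$ is correct (note that for the lemma as stated you do not strictly need the pointwise density bound --- applying the sandwich $P_t = P_{t_0/2}P_{t-t_0}P_{t_0/2}$ directly to $F-I_F$ and using $\|F-I_F\|_{L^1(f)}\le 2I_F$ already gives the conclusion); the divergence-form computation of the generator and the self-adjointness on $L^2(f\,dy)$ are right; and since $\kappa<8$ is fixed we have $4a>1$, so the one-dimensional Muckenhoupt/Hardy criterion does yield a Poincar\'e inequality (you should really state it with respect to the median of $f$ rather than $\pi/2$, but by symmetry of $f$ this is the same point, and the endpoint computation $\int_0^x f\cdot\int_x 1/f\asymp x^2$ is correct in either form). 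You have also correctly located the only genuinely delicate step: the uniform-in-$x$ ultracontractivity bound $p_s(x,y)\le K(s)f(y)$, which cannot come from interior parabolic estimates alone because of the $\cot$-singularity at the endpoints. Your plan to handle it by comparison with a Bessel process of index $\nu=2a-\tfrac12$ is the right idea --- for the Bessel kernel the $x$-dependence cancels in the small-$y$ asymptotics, giving $p_s(x,y)\asymp y^{4a}$ uniformly, which matches $f$ --- but this comparison needs to be carried out as an actual two-sided stochastic domination, including a verification that the error in $2a\cot\theta\approx 2a/\theta$ does not spoil the uniformity for $x$ near the opposite endpoint. Apart from this (acknowledged) technical debt and a harmless $K(t_0)$ vs.\ $K(t_0/2)$ bookkeeping slip, the argument is complete. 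Whether this is the same route as \cite{nat} I cannot confirm from the present paper; a coupling argument (two solutions of \eqref{sde1} driven by the same Brownian motion coalesce at a geometric rate) would be a more elementary alternative that bypasses the ultracontractivity step entirely, and may be closer to what ``standard techniques for one-dimensional diffusions'' alludes to.
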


Note that this estimate applies uniformly over all starting points $x$.

An important case for us is $F(x) = [\sin x]^{-\beta}
= \sin^{1-4a} x$. Let
%
\begin{equation} \label{mar41}
c_* = I_F = \frac{C_{4a}}{C_{1}} = \frac{2}{\int_0^\pi\sin^{4a} x \,dx}.
\end{equation}

We will take advantage of this uniform bound to
give a concrete estimate on how well the Green's function approximates
the probability of getting near a point.\vspace*{-2pt}
\begin{lemma}\label{fastGreen} There exists $u > 0$
such that if $D$ is a simply connected domain, and $z_1,z_2$ are
points in its boundary, $r \leq3/4$, $\gamma$
is an $\mbox{SLE}_\kappa$ curve from $z_1$ to $z_2$, $w \in D$,
and $D_\infty$ denotes the connected component of
$D \setminus\gamma(0,\infty)$ containing $w$, then
\begin{eqnarray*}
\Prob\{\Upsilon_{D_\infty}(w) \le
r \cdot\Upsilon_D(w) \} & = & c_* r^{2-d}
S_D(w;z_1,z_2)^\beta[1 + O(r^u)] \\[-2pt]
& = & c_* r^{2-d} \Upsilon_D(w)^{2-d}
G_D(w;z_1,z_2) [1 + O(r^u)],
\end{eqnarray*}
where $c_*$ is as defined in (\ref{mar41}). In particular,
there exists $c < \infty$ such that for all $r \le3/4$,
\[
\Prob\{\Upsilon_{D_\infty}(w)
\leq
r \cdot\Upsilon_D(w) \} \leq
c r^{2-d} S_D(w;z_1,z_2)^\beta.\vspace*{-2pt}
\]
\end{lemma}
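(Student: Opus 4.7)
The plan is to reduce, by conformal invariance, to the case $D = \Half$, $z_1 = 0$, $z_2 = \infty$, so that $w \in \Half$ with $\Upsilon_D(w) = \Im(w)$ and $S_D(w;z_1,z_2) = \sin\arg w =: \sin x$. The event $\{\Upsilon_{D_\infty}(w) \le r\,\Upsilon_D(w)\}$ then translates, via the radial parametrization of Section \ref{radial} (taking $w$ as the marked interior point), into the survival event $\{\tau > s\}$, where $\tau$ is the exit time of $\Theta_t$ from $(0,\pi)$ and $s := -\log r/(2a)$.

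The core of the argument is a Girsanov calculation. Since $\tau = \infty$ almost surely under the weighted measure $\Prob^*$, writing the Radon--Nikodym derivative as $d\Prob/d\Prob^* = \hat M_0/\hat M_t$ on $\F_t$ yields
\[
\Prob^x\{\tau > s\} = \E^{*,x}\!\left[\frac{\hat M_0}{\hat M_s}\right] = \sin^\beta x \cdot e^{-(2a-1/2)s} \, \E^{*,x}\!\left[\sin^{-\beta}\Theta_s\right].
\]
The prefactor simplifies: since $(2a-1/2)/(2a) = 1 - 1/(4a) = 2 - d$, one has $e^{-(2a-1/2)s} = r^{2-d}$.

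Next, I would apply Lemma \ref{invdense} to $F(\theta) = \sin^{-\beta}\theta$. The key algebraic identity $-\beta + 4a = 1$ gives
\[
I_F = C_{4a}\int_0^\pi \sin^{-\beta}\theta \cdot \sin^{4a}\theta\,d\theta = C_{4a}\int_0^\pi \sin\theta\,d\theta = \frac{C_{4a}}{C_1} = c_*.
\]
The uniform-in-$x$ convergence provided by Lemma \ref{invdense} then gives $\E^{*,x}[\sin^{-\beta}\Theta_s] = c_*\bigl(1 + O(e^{-us})\bigr)$ for $s \ge s_0$. Converting $e^{-us} = r^{u/(2a)}$ and combining everything produces the main asymptotic, and the second form in the statement agrees with the first via the identity $G_D(w;z_1,z_2) = \Upsilon_D(w)^{d-2}\sin^\beta x$.

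For the uniform upper bound valid on all $r \le 3/4$, the main asymptotic covers $r \le r_0 := e^{-2as_0}$; for $r \in (r_0, 3/4]$ monotonicity of the event gives $\Prob\{\Upsilon_{D_\infty}(w) \le r\Upsilon_D(w)\} \le \Prob\{\Upsilon_{D_\infty}(w) \le (3/4)\Upsilon_D(w)\} \le c\sin^\beta x$ by the main asymptotic at $r = 3/4$, and combining with the lower bound $r^{2-d} \ge r_0^{2-d}$ on this compact range finishes the estimate. The main obstacle is bookkeeping rather than ideas: one must carefully track the sign identity $2a(2-d) = 2a - 1/2 > 0$ to land on the decreasing exponential $e^{-(2a-1/2)s}$ and hence the small factor $r^{2-d}$ (rather than the nonsensical $r^{d-2} > 1$), and one must notice the cancellation $-\beta + 4a = 1$ that makes $I_F$ collapse precisely to the constant $c_*$ appearing in the statement.
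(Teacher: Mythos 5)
Your proof is correct and follows the same route as the paper's: reduce by conformal invariance, pass to the radial parametrization, apply the Girsanov change of measure to $\Prob^*$ (under which $\Theta_t$ never exits $(0,\pi)$), and invoke Lemma \ref{invdense} with $F(\theta)=\sin^{-\beta}\theta$, using the cancellation $-\beta+4a=1$ to identify $I_F=c_*$. One cosmetic point: the closing paragraph treating $r\in(r_0,3/4]$ separately is unnecessary---Lemma \ref{invdense} holds for all $t\geq t_0$ with \emph{any} choice of $t_0>0$, so one may simply take $t_0=-\log(3/4)/(2a)$ from the outset; and as written, the appeal to ``the main asymptotic at $r=3/4$'' is mildly circular, since that value of $r$ need not lie in the range $r\leq r_0$ where you established the asymptotic.
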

\begin{pf} By conformal invariance we may assume
$\Upsilon_D(w)=1 $ and define $t$ by $r=e^{-2at}$. Let
$\sigma= \inf\{s\dvtx \Upsilon_s = r \}$. Then,
\begin{eqnarray*}
\Prob\{ \sigma< \infty\} & = & \E[1\{\sigma< \infty\}]\\[-2pt]
& = & r^{2-d} \E[\hat M_t \hat S_t^{-\beta} ] \\
& = & r^{2-d} S_D(w;z_1,z_2)^\beta\E^*[\hat S_t^{-\beta} ]\\
& = & c_* r^{2-d} S_D(w;z_1,z_2)^\beta[1 + O(e^{-ut})]\\
& = & c_* r^{2-d} \Upsilon_D(w)^{2-d}
G_D(w;z_1,z_2) [1 + O(e^{-ut})].
\end{eqnarray*}
\upqed\end{pf}

Using (\ref{koebe}) and (\ref{beurling}), we immediately get
the following lemma which is in the form that we will use.
\begin{lemma}\label{july173} There exists $C < \infty$, such that if
$D$ is a simply connected domain, and $z_1,z_2$ are points in its
boundary, $r \leq3/4$, and $\gamma$
is an $\mbox{SLE}_\kappa$ curve from $z_1$ to $z_2$, then
\[
\Prob\{\dist[w,\gamma[0,\infty)] \leq
r \cdot\inrad_D(w) \} \leq C r^{2-d}
\biggl[\frac{\inrad_D(w)}{\Delta_D^*(w;z_1,z_2)} \biggr]
^{\beta/2}.
\]
\end{lemma}

\subsection{Two-sided radial $\mbox{SLE}$}\label{twosided}

We call $\mbox{SLE}_\kappa$ under the measure $\Prob^*$ in the
previous subsection \textit{two-sided radial $\mbox{SLE}_\kappa$}
(from $0$ to $\infty$ through $z$ in $\Half$ stopped
when it reaches $z$). Roughly speaking it is chordal
$\mbox{SLE}_\kappa$ conditioned to go through $z$ (stopped
when it reaches $z$). Of course this is an event of
probability zero, so we cannot define the process exactly this
way. We may provide a direct definition by driving the Loewner equation
by the process defined in (\ref{sde1}) rather than a standard
Brownian motion. This definition uses the radial parametrization. We
could also just as well use the
capacity parametrization, in which case with probability
one $T_z < \infty$.

One may justify the definition above examining its relationship to
$\mbox{SLE}_\kappa$
conditioned to get close to $z$. This next proposition is just a restatement
of the definition of the measure $\Prob^*$ when restricted to curves
stopped at a particular stopping time.
\begin{proposition}\label{RNderiv} Suppose $\gamma$ is a chordal
$\mbox{SLE}_\kappa$
path from $0$ to $\infty$ and $z \in\Half$. For $\epsilon
\leq\Im(z)$, let $\rho_\epsilon
= \inf\{t\dvtx \Upsilon_t(z) = \epsilon\}$. Let $\mu,\mu^*$
be the two measures on $\{\gamma(t)\dvtx 0 \leq t \leq
\rho_\epsilon\}$
corresponding to chordal $\mbox{SLE}_\kappa$ restricted to the
event $\{ \rho_\epsilon
< \infty\}$ and two-sided radial $\mbox{SLE}_\kappa$ through $z$,
respectively. Then
$\mu,\mu^*$ are mutually absolutely continuous with respect
to each other with the Radon--Nikodym derivative
\[
\frac{d\mu^*}{d\mu} = \frac{G_{H_{\rho_\epsilon}}(z;\gamma(\rho
_\epsilon
),\infty)}{G_\Half(z;0,\infty)} = \frac{\epsilon^{d-2} S_{\rho
_\epsilon
}(z)^\beta}{G_\Half(z;0,\infty)}.
\]
\end{proposition}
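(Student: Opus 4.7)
The plan is to extract the claimed Radon--Nikodym derivative directly from the Girsanov-based definition of $\Prob^*$ given in Section \ref{radial}. Recall that $\Prob^*$ was defined via the formula $\Prob^*[V] = \hat{M}_0^{-1}\,\E[\hat{M}_t\,\mathbf{1}_V]$ for $V \in \F_t$, where $\F_t$ is generated by the driving Brownian motion up to radial time $t$ and $\hat{M}_t = M_{\sigma(t)}(z)$ is the time change of the local martingale $M_t = \Upsilon_t^{d-2}\,S_t^{\beta}$. The strategy is simply to read this formula at the specific radial time corresponding to $\rho_\epsilon$ and then identify both sides as Green's functions.

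First I would translate between the two parametrizations. By the very definition $\sigma(t) = \inf\{s : \Upsilon_s = e^{-2at}\}$, the deterministic radial time $t_\epsilon := -\log\epsilon/(2a)$ satisfies $\sigma(t_\epsilon) = \rho_\epsilon$ on the event $\{\rho_\epsilon < \infty\}$. In particular $\F_{t_\epsilon}$ coincides (modulo this event) with the $\sigma$-algebra generated by the curve segment $\{\gamma(s) : 0 \leq s \leq \rho_\epsilon\}$, which is precisely the $\sigma$-algebra on which $\mu$ and $\mu^*$ live. Applying the defining formula for $\Prob^*$ at the deterministic radial time $t_\epsilon$ therefore yields $\mu^*[V] = \hat{M}_0^{-1}\,\E[\hat{M}_{t_\epsilon}\,\mathbf{1}_V\,\mathbf{1}_{\{\rho_\epsilon < \infty\}}]$ for every measurable $V$, which is exactly the statement that $d\mu^*/d\mu = \hat{M}_{t_\epsilon}/\hat{M}_0$.

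Next I would identify these two martingale values as the stated Green's functions. At time zero, the relations $\Upsilon_\Half(z) = \Im(z)$ and $S_\Half(z;0,\infty) = \sin\arg z$ give $\hat{M}_0 = M_0 = \Upsilon_0^{d-2}\,S_0^{\beta} = G_\Half(z;0,\infty)$. At the stopping time, $\Upsilon_{\rho_\epsilon} = \epsilon$ by construction, so $\hat{M}_{t_\epsilon} = M_{\rho_\epsilon} = \epsilon^{d-2}\,S_{\rho_\epsilon}(z)^\beta$, which by the defining formula for the Green's function in a general simply connected domain equals $G_{H_{\rho_\epsilon}}(z;\gamma(\rho_\epsilon),\infty)$. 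Substituting these two evaluations into the identity of the previous paragraph gives the formula claimed in the proposition.

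The only subtlety requiring comment is the mismatch of total mass: under $\Prob^*$ the curve reaches $z$ almost surely, so $\rho_\epsilon < \infty$ with $\mu^*$-probability one, whereas under chordal $\Prob$ the event $\{\rho_\epsilon < \infty\}$ has probability strictly less than one. This is precisely why $\mu$ must be taken as the \emph{restriction} of $\Prob$ to $\{\rho_\epsilon < \infty\}$ rather than a conditioning. Mutual absolute continuity then follows automatically from the fact that $\hat{M}_{t_\epsilon} = \epsilon^{d-2}\,S_{\rho_\epsilon}^{\beta}$ is strictly positive and finite on this event (since the curve has not reached $z$, we have $S_{\rho_\epsilon} \in (0,1]$), so both the density and its reciprocal are finite $\mu$-a.s. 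There is no serious obstacle here; the proof is essentially a bookkeeping exercise translating the Girsanov construction into the language of these two specific measures on curve segments.
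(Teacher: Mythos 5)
Your proposal is correct and matches the paper's intent: the paper itself remarks that Proposition~\ref{RNderiv} is ``just a restatement of the definition of the measure $\Prob^*$ when restricted to curves stopped at a particular stopping time,'' and offers no further proof. Your write-up is exactly that restatement, carefully unpacked: translating $\rho_\epsilon$ into the deterministic radial time $t_\epsilon = -\log\epsilon/(2a)$, applying the Girsanov formula $\Prob^*[V] = \hat M_0^{-1}\,\E[\hat M_{t_\epsilon}\,\mathbf 1_V]$ on $\F_{t_\epsilon}$, identifying $\hat M_0 = G_\Half(z;0,\infty)$ and $\hat M_{t_\epsilon} = \epsilon^{d-2}S_{\rho_\epsilon}(z)^\beta = G_{H_{\rho_\epsilon}}(z;\gamma(\rho_\epsilon),\infty)$, and noting that positivity and finiteness of $S_{\rho_\epsilon}$ on $\{\rho_\epsilon<\infty\}$ give mutual absolute continuity. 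Your observation about restriction versus conditioning is a useful clarification but is implicit in the paper's statement of the proposition.
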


Note that as $\epsilon\rightarrow0$ the Radon--Nikodym derivative
tends to infinity. This reflects the fact that $\mu^*$ is a probability
measure and that the total mass of $\mu$ is of order $\epsilon^{2-d}$
(see Lemma~\ref{fastGreen}).

This proposition seems to indicate that there is a still
a significant difference between two-sided radial $\mbox{SLE}_\kappa$
going though $z$ and $\mbox{SLE}_\kappa$ conditioned to get within a
specific distance. However, by using the methods of Lemma~\ref
{invdense} we get the following improvement.
\begin{proposition}\label{measurelimit} There exists $u > 0,
c < \infty$ such that
the following is true.
Suppose $\gamma$ is a chordal $\mbox{SLE}_\kappa$
path from $0$ to $\infty$ and $z \in\Half$. For $\epsilon
\leq\Im(z)$, let $\rho_\epsilon
= \inf\{t\dvtx \Upsilon_t(z) = \epsilon\}$. Suppose $\epsilon' <
3\epsilon
/4$. Let $\mu',\mu^*$
be the two probability
measures on $\{\gamma(t)\dvtx 0 \leq t \leq
\rho_\epsilon\}$
corresponding to chordal $\mbox{SLE}_\kappa$ conditioned on the
event $\{ \rho_{\epsilon'}
< \infty\}$ and two-sided radial $\mbox{SLE}_\kappa$ through $z$,
respectively. Then
$\mu',\mu^*$ are mutually absolutely continuous with respect
to each other and the Radon--Nikodym derivative
satisfies
\[
\biggl|\frac{d\mu^*}{d\mu'}- 1 \biggr| \leq
c (\epsilon'/\epsilon)^u .
\]
\end{proposition}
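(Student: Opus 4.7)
The plan is to compare both $\mu^*$ and $\mu'$ to a common reference measure $\tilde\mu$: chordal $SLE_\kappa$ stopped at $\rho_\epsilon$ and restricted (not conditioned) to $\{\rho_\epsilon < \infty\}$. Proposition \ref{RNderiv} gives the first Radon--Nikodym derivative for free:
\[
\frac{d\mu^*}{d\tilde\mu}(\gamma) = \frac{\epsilon^{d-2}\, S_{\rho_\epsilon}(z)^\beta}{G_\Half(z;0,\infty)}.
\]
For $\mu'$, I would write $\mu' = q^{-1}\nu$, where $q = \Prob\{\rho_{\epsilon'} < \infty\}$ and $\nu$ is the $SLE$ measure stopped at $\rho_\epsilon$ and restricted to $\{\rho_{\epsilon'} < \infty\}$. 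Since $\epsilon' < \epsilon$ forces $\{\rho_{\epsilon'} < \infty\} \subseteq \{\rho_\epsilon < \infty\}$, the $SLE_\kappa$ Markov property gives, on $\{\rho_\epsilon < \infty\}$,
\[
\frac{d\nu}{d\tilde\mu}(\gamma) = \Prob\{\rho_{\epsilon'} < \infty \mid \F_{\rho_\epsilon}\}(\gamma),
\]
the probability that an independent $SLE_\kappa$ in $H_{\rho_\epsilon}$ from $\gamma(\rho_\epsilon)$ to $\infty$ drives the conformal radius of $z$ from $\epsilon$ down to $\epsilon'$.

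The key step is to apply Lemma \ref{fastGreen} twice: inside $H_{\rho_\epsilon}$ with ratio $r = \epsilon'/\epsilon \le 3/4$, giving
\[
\frac{d\nu}{d\tilde\mu}(\gamma) = c_* (\epsilon'/\epsilon)^{2-d}\, S_{\rho_\epsilon}(z)^\beta \bigl[1 + O((\epsilon'/\epsilon)^u)\bigr],
\]
and then in $\Half$ itself with $r = \epsilon'/\Im(z) \le \epsilon'/\epsilon$, giving
\[
q = c_* (\epsilon'/\Im(z))^{2-d}\, S_\Half(z;0,\infty)^\beta \bigl[1 + O((\epsilon'/\Im(z))^u)\bigr].
\]
Assembling the identity $d\mu^*/d\mu' = q \cdot (d\mu^*/d\tilde\mu)/(d\nu/d\tilde\mu)$ and using $G_\Half(z;0,\infty) = \Im(z)^{d-2}\, S_\Half(z;0,\infty)^\beta$, the factors of $\epsilon^{d-2}$, $(\epsilon')^{2-d}$, $\Im(z)^{d-2}$, $S_{\rho_\epsilon}(z)^\beta$, $S_\Half(z;0,\infty)^\beta$, and $c_*$ all cancel algebraically, leaving
\[
\frac{d\mu^*}{d\mu'}(\gamma) = \frac{1 + O((\epsilon'/\Im(z))^u)}{1 + O((\epsilon'/\epsilon)^u)}.
\]

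The conclusion then follows from the observation that the hypothesis $\epsilon \le \Im(z)$ forces $\epsilon'/\Im(z) \le \epsilon'/\epsilon$, so that both numerator and denominator have the form $1 + O((\epsilon'/\epsilon)^u)$; dividing them and, if necessary, enlarging $c$ so that the bound is trivially true whenever $(\epsilon'/\epsilon)^u$ is not small gives the claim. The main subtlety of the argument lies not in any single calculation but in lining up the two applications of Lemma \ref{fastGreen}: the natural ratios $\epsilon'/\epsilon$ and $\epsilon'/\Im(z)$ are different, and without the standing assumption $\epsilon \le \Im(z)$ one could not express the final error purely in the form the proposition demands. The parallel cancellation of the path-dependent factor $S_{\rho_\epsilon}(z)^\beta$ between the martingale density of $\mu^*$ and the conditional hitting probability is automatic from the formulas, and ultimately reflects the rapid mixing of the angle process $\Theta_t$ to equilibrium that underlies Lemma \ref{fastGreen} via Lemma \ref{invdense}.
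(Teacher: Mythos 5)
Your approach is correct and matches the one the paper signals with its remark ``by using the methods of Lemma \ref{invdense} we get the following improvement'': you package that convergence-to-equilibrium estimate through Lemma \ref{fastGreen} rather than re-running the angle-process argument from scratch, which is cleaner but ultimately the same mechanism. The decomposition through the common reference measure $\tilde\mu$, the use of Proposition \ref{RNderiv} for $d\mu^*/d\tilde\mu$, and the two applications of Lemma \ref{fastGreen} to $d\nu/d\tilde\mu$ and to $q$ are all exactly right, as is the algebraic cancellation of $c_*$, $S_{\rho_\epsilon}^\beta$, $S_\Half^\beta$, and the powers of $\epsilon$, $\epsilon'$, $\Im(z)$.

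The one place where you wave your hands is the last sentence. To divide the two $1+O(\cdot)$ factors (and to make ``enlarge $c$ so the bound is trivially true when $(\epsilon'/\epsilon)^u$ is not small'' meaningful), you need the denominator bounded below by a positive constant uniformly over paths -- otherwise the Radon--Nikodym derivative could a priori blow up on the part of the admissible range $\epsilon'\in[r_0\epsilon,3\epsilon/4)$ where $C(\epsilon'/\epsilon)^u$ is of order one. This lower bound is in fact free: from the proof of Lemma \ref{fastGreen}, the quantity $1+E$ in your denominator equals $\E^*[\hat S_t^{-\beta}]/c_*$ for the angle process started from $\arg\Phi_{H_{\rho_\epsilon}}(z)$, and since $\hat S_t\le 1$ and $\beta>0$ one has $\E^*[\hat S_t^{-\beta}]\ge 1$, hence $1+E\ge 1/c_*>0$ for every path. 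With that observation you get directly $\bigl|\tfrac{1+E'}{1+E}-1\bigr|\le c_*\,|E'-E|\le 2Cc_*(\epsilon'/\epsilon)^u$, with no case-splitting required. So the gap is small and easily closed, but as written it is not quite a complete argument.
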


From the definition, it is easy to show that there is a subsequence
$t_n \uparrow T_z$ with $\gamma(t_n) \rightarrow z$.
In fact, in~\cite{Lnew}, a stronger fact is proven: for $0 < k < 8$, with
probability one, the two-sided radial measure produces a curve, by which
we mean that with probability one $\gamma(T_z^-) = z$.
\begin{lemma}\label{approachlem}
Let $\rho_\epsilon= \inf\{t\dvtx \Upsilon_t(z) = \epsilon\}$. There
exists $\alpha> 0$ so that for any $z\in\Half$ there exists $c_z <
\infty$, so that for any $\epsilon$ and $R$ with $\epsilon\le R \le
\Im
(z)$ we have
\[
\Prob^*\{\gamma[\rho_\epsilon,T_z] \not\subseteq B_R(z) \} \le c_z
\biggl(\frac{\epsilon}{R}\biggr)^\alpha.
\]
\end{lemma}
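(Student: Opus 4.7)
The plan is to combine a Koebe-type distortion estimate that relates $|\gamma(t)-z|$ to the radial coordinates $(\Upsilon_t,\Theta_t)$ with scale-function analysis of the SDE \eqref{sde1} for $\Theta_t$ under $\Prob^*$. First I would shift the radial parametrization so that $\rho_\epsilon$ corresponds to $s=0$ and $\Upsilon_{\rho_\epsilon+s}=\epsilon\,e^{-2as}$. By the Markov property of two-sided radial $SLE$ at $\rho_\epsilon$, the future of the curve has the law of two-sided radial $SLE_\kappa$ from $\gamma(\rho_\epsilon)$ to $\infty$ through $z$ in the slit domain $H_{\rho_\epsilon}$, and in particular $\Theta_{\rho_\epsilon+s}$ continues to satisfy \eqref{sde1} with respect to a fresh Brownian motion.

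The geometric input is the standard distortion estimate
\[
|\gamma(t)-z|\ \le\ C\,\frac{\Upsilon_t(z)}{\sin^2\Theta_t(z)},
\]
obtained by applying Koebe growth/distortion to the inverse Loewner map $f_t=g_t^{-1}\colon\Half\to H_t$, together with the identities $|Z_t|=Y_t/\sin\Theta_t$, $\Upsilon_t=Y_t/|g_t'(z)|$, and the fact that $z=f_t(U_t+Z_t)$ while $\gamma(t)=f_t(U_t^+)$. Taking the contrapositive, on the event $\{|\gamma(\rho_\epsilon+s)-z|\ge R\}$ one must have
\[
\sin\Theta_{\rho_\epsilon+s}\ \le\ C\sqrt{\epsilon/R}\,e^{-as}.
\]

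Writing $\delta:=C\sqrt{\epsilon/R}$, it remains to bound $\Prob^*\{\exists s\ge 0:\sin\Theta_{\rho_\epsilon+s}\le \delta e^{-as}\}$. I would partition $[0,\infty)$ into unit radial-time intervals $I_n=[n,n+1]$, on each of which the threshold is at most $\delta e^{-an}$. Conditional on $\F_{\rho_\epsilon+n}$, Lemma \ref{invdense} shows that $\Theta_{\rho_\epsilon+n}$ has distribution close to the invariant density $C_{4a}\sin^{4a}(\cdot)$, so with overwhelming probability it is bounded away from $0$ and $\pi$. Given this, Lemma \ref{mar3.lemma1} (the scale-function bound for the diffusion $\Theta$) bounds the conditional probability that $\Theta$ descends to $\delta e^{-an}$ during $I_n$ by a constant times $(\delta e^{-an})^{4a-1}$. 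Summing the geometric series $\sum_{n\ge 0}(\delta e^{-an})^{4a-1}$ produces a total bound of the form $c_z\,\delta^{4a-1}=c_z\,(\epsilon/R)^{(4a-1)/2}$, which yields the lemma with $\alpha=\beta/2$.

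The main obstacle is the distortion estimate in the first step: when $\sin\Theta_t$ is small the Möbius parameter appearing in Koebe's theorem applied at $U_t+Z_t$ exceeds $1$, so obtaining the $\sin^{-2}\Theta$ factor requires either iterating Koebe along a chain of points with bounded hyperbolic spacing from $U_t+Z_t$ to $U_t$, or recasting the bound in terms of hyperbolic distance. A secondary technicality is that Lemma \ref{mar3.lemma1} only gives useful information when the initial condition on each $I_n$ is away from $0$ and $\pi$; handling the small-probability bad event uses Lemma \ref{invdense} and a union bound, with the $z$-dependence of $c_z$ coming from the time required for the $\Theta$-process started from $\arg(z/|z|)$ to equilibrate, which depends on $\Im(z)$.
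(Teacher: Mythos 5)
Your route is genuinely different from the paper's: the paper proves Lemma \ref{approachlem} by a one-line appeal to conformal invariance together with \cite[Theorem 3]{Lnew}, whereas you attempt a self-contained argument via the radial SDE. Unfortunately the key geometric input in your first step is wrong. The inequality $|\gamma(t)-z|\le C\,\Upsilon_t(z)/\sin^2\Theta_t(z)$ is \emph{false}: take the simple (hence valid) Loewner curve $\gamma[0,t]=[0,2i]\cup[2i,\,M+2i]$ and $z=\epsilon+i$. Then $\Upsilon_t(z)\asymp\inrad_{H_t}(z)\asymp\epsilon$, the far boundary arc is reached from $z$ by going around $\gamma(0)=0$ so $\Delta^*_{H_t}(z;\gamma(t),\infty)\asymp 1$, and a direct computation (map by $w\mapsto w^2$ and use Beurling in the sharp direction) gives $\sin\Theta_t\asymp\sqrt\epsilon$. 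Thus $\Upsilon_t/\sin^2\Theta_t\asymp 1$ while $|\gamma(t)-z|\asymp M$ is arbitrary. The obstruction is structural: $\Upsilon_t/\sin^2\Theta_t$ is comparable to $\Delta^*_{H_t}(z;\gamma(t),\infty)$, and $\Delta^*$ has no reason to compare to $|\gamma(t)-z|$ since the far arc may be reached by circling $\gamma(0)$ or $\R$ rather than the tip. This is also why the fixes you suggest cannot succeed: iterating Koebe along a hyperbolic chain from $g_t(z)$ to $U_t$, or bounding $|f_t(Z_t)-f_t(0^+)|$ by integrating $|f_t'|$ along the hyperbolic geodesic, produces $\int e^{\rho}\Upsilon_t\,d\rho$, which diverges at the boundary endpoint; no distortion-only argument can control the position of the prime end $\gamma(t)$.

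What your proof actually uses is only the weaker implication, for a fixed $R\le\Im(z)$, that $|\gamma(\rho_\epsilon+s)-z|\ge R$ forces $\sin\Theta_{\rho_\epsilon+s}\le C\sqrt{\epsilon/R}\,e^{-as}$. That is plausibly true (and in the example it holds precisely because $R\le\Im(z)=1$), but via \eqref{beurling} it reduces to the claim that $|\gamma(t)-z|\ge R$ and $\Im(z)\ge R$ force $\Delta^*_{H_t}(z;\gamma(t),\infty)\ge cR$. This is a genuine topological statement about Loewner hulls---one must show that a crosscut in $B_{R/2}(z)$ joining $\partial_1 H_t$ to $\partial_2 H_t$ traps either the prime end $\gamma(t)$ or a piece of $\R$ inside a comparably sized ball---and it needs an argument of the kind developed in the topological lemmas of Section \ref{befSec}, not a Koebe estimate; your write-up neither states nor proves it. A secondary issue is that Lemma \ref{mar3.lemma1} controls the probability of $\Theta$ hitting $y$ \emph{before} returning to $\pi/2$, not of hitting $y$ within a unit time window, so the sum over $I_n$ needs an additional excursion or renewal step. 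As written, the geometric reduction has a real gap.
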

\begin{pf}
This result was shown for a two-sided radial through $0$ from $1$ to
$-1$ in $\Disk$ in~\cite{Lnew}, Theorem 3. Since $c_z$ is allowed to
depend on $z$, the form in this lemma can be obtained by conformal invariance.
\end{pf}

We will also need this bound in a chordal form, rather than two-sided
radial form. In order to prove the chordal form, we need the following lemma.

\begin{lemma}\label{sinbound}
Let $\rho_\epsilon= \inf\{t\dvtx \Upsilon_t(z) = \epsilon\}$. There
exists $c <\infty$, such that if $z \in\Half$ and $\epsilon\leq\Im
(z)/2$, $0 < \theta_0
\leq\pi/2$,
\[
\Prob\{S_{\rho_\epsilon}(z) < \sin(\theta_0) | \rho_\epsilon<
\infty\}
\le c \theta_0^{2}.
\]
\end{lemma}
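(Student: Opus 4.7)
The plan is to convert the chordal conditional probability into a ratio of expectations under the two-sided radial measure $\Prob^*$ via Proposition \ref{RNderiv}, then estimate each piece using the convergence-to-equilibrium result Lemma \ref{invdense}.

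By Proposition \ref{RNderiv}, the chordal measure $\mu$ restricted to $\{\rho_\epsilon < \infty\}$ and the two-sided radial measure $\mu^*$ are mutually absolutely continuous with $d\mu^*/d\mu = \epsilon^{d-2} S_{\rho_\epsilon}^\beta/G_\Half(z;0,\infty)$. Inverting and integrating against $1_V$ for an $\F_{\rho_\epsilon}$-measurable event $V$ gives
\[
\Prob(V \mid \rho_\epsilon < \infty) = \frac{\mu(V)}{\mu(\Omega)} = \frac{\E^*[S_{\rho_\epsilon}^{-\beta} \, 1_V]}{\E^*[S_{\rho_\epsilon}^{-\beta}]}.
\]
In the radial parametrization, $S_{\rho_\epsilon} = \sin \Theta_T$ where $\Theta$ solves the SDE \eqref{sde1} under $\Prob^*$ started from $\arg z$, and $T = \log(\Im(z)/\epsilon)/(2a) \ge \log(2)/(2a)$ by the hypothesis $\epsilon \le \Im(z)/2$.

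For the denominator, $\sin \Theta_T \le 1$ together with $\beta > 0$ immediately gives $\E^*[S_{\rho_\epsilon}^{-\beta}] \ge 1$. For the numerator, I would apply Lemma \ref{invdense} with $t_0 = \log(2)/(2a)$ and $F(x) = \sin^{-\beta}(x) \, 1\{\sin x < \sin \theta_0\}$. Since $4a - \beta = 1$, a direct integration yields
\[
I_F = C_{4a} \int_0^\pi (\sin x) \, 1\{\sin x < \sin \theta_0\} \, dx = 2 C_{4a}(1 - \cos \theta_0) \le c \, \theta_0^2,
\]
using $1 - \cos \theta_0 \le \theta_0^2/2$ for $\theta_0 \in (0,\pi/2]$. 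Lemma \ref{invdense} then gives $\E^*[F(\Theta_T)] \le (1 + c' e^{-uT}) I_F \le c'' \theta_0^2$ uniformly over the starting angle.

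Dividing the numerator bound by the denominator bound yields the claim. The argument is short once the Radon--Nikodym rewriting is in hand; the only real thing to verify is that the hypothesis $\epsilon \le \Im(z)/2$ is exactly what keeps $T$ bounded below by the positive constant needed to invoke Lemma \ref{invdense} with constants uniform in the starting point $\arg z$.
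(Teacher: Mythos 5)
Your proof is correct and follows essentially the same route as the paper: rewrite the conditional probability as a two-sided radial expectation via Proposition \ref{RNderiv}, then apply Lemma \ref{invdense} with $F(x)=\sin^{-\beta}(x)\,1\{\sin x<\sin\theta_0\}$ and note $4a-\beta=1$ makes the resulting integral $O(\theta_0^2)$. The only (minor) difference is how you control the normalization: you use the ratio form $\E^*[S^{-\beta}1_V]/\E^*[S^{-\beta}]$ with the trivial lower bound $\E^*[S^{-\beta}]\ge1$, whereas the paper invokes Lemma \ref{fastGreen} for $\Prob\{\rho_\epsilon<\infty\}$ — both yield the same estimate.
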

\begin{pf}
First note that by Proposition~\ref{RNderiv} and Lemma~\ref{fastGreen}
we have that
\[
\Prob\{S_{\rho_\epsilon}(z) < \sin(\theta_0) | {\rho_\epsilon} <
\infty
\} \leq c \E^*[S_{\rho_\epsilon}^{-\beta}(z)1\{S_{\rho_\epsilon
}(z) <
\sin(\theta_0)\}].
\]
By applying the techniques from Lemma~\ref{invdense} with the function
\[
F(\theta) = \sin(\theta)^{-\beta}1\{\sin(\theta) < \sin(\theta
_0)\},\vadjust{\goodbreak}
\]
and noting that the integral is
\[
\int_0^\pi\sin(\theta)^{-\beta}1\{\sin(\theta) < \sin(\theta
_0)\} \sin
^{4a} \,d\theta= 2\int_0^{\theta_0} \sin(\theta) \,d\theta= O(\theta_0^2),
\]
we get the result.
\end{pf}
\begin{lemma}\label{approachlemcond}
Let $\rho_\epsilon= \inf\{t \dvtx \Upsilon_t(z) = \epsilon\}$. Fix
$\epsilon< \eta< R < 1$ and $z \in\Half$, then there exists some $c$
depending only on $z$ and $\alpha> 0$ such that
\[
\Prob\{\gamma[\rho_{\eta},\rho_\epsilon] \not\subseteq B_R(z) |
{\rho
_\epsilon} < \infty\} \le c \biggl(\frac{\eta}{R}\biggr)^\alpha.
\]
\end{lemma}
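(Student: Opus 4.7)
The plan is to transfer the two-sided radial bound from Lemma~\ref{approachlem} to the conditional chordal measure using the explicit Radon--Nikodym derivative from Proposition~\ref{RNderiv}, while controlling the blow-up of this derivative via Lemma~\ref{sinbound}. Write $\nu := \Prob(\cdot \mid \rho_\epsilon<\infty)$ for the conditional chordal measure, $\mu^*$ for the two-sided radial $SLE_\kappa$ measure through $z$, and $A := \{\gamma[\rho_\eta,\rho_\epsilon]\not\subseteq B_R(z)\}$ for the event of interest.

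The first step is to bound $\mu^*(A)$. Under $\mu^*$ we have $\rho_\epsilon < T_z$ almost surely, so $A \subseteq \{\gamma[\rho_\eta, T_z]\not\subseteq B_R(z)\}$, and Lemma~\ref{approachlem} applied with its $\epsilon$ equal to our $\eta$ (replacing $R$ by $R\wedge\Im z$ at the cost of increasing the constant by a $z$-dependent factor) yields $\mu^*(A)\le c_z(\eta/R)^\alpha$ for some $\alpha>0$.

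The second step is to transfer this bound to $\nu$. By Proposition~\ref{RNderiv} combined with the asymptotic $\Prob(\rho_\epsilon<\infty) \asymp \epsilon^{2-d}G_\Half(z;0,\infty)$ from Lemma~\ref{fastGreen}, on $\F_{\rho_\epsilon}\cap\{\rho_\epsilon<\infty\}$ I have
\[
\frac{d\nu}{d\mu^*} = \frac{G_\Half(z;0,\infty)}{\epsilon^{d-2}\,S_{\rho_\epsilon}^\beta\,\Prob(\rho_\epsilon<\infty)} \le c_z\,S_{\rho_\epsilon}^{-\beta},
\]
provided $\epsilon$ is small enough in terms of $\Im z$ (if not, the claim is trivial since $\eta/R$ is bounded below and the bound $1$ can be absorbed into $c_z$). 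For $\theta_0\in(0,\pi/2)$ I then split
\[
\nu(A) \le \nu\bigl(A\cap\{S_{\rho_\epsilon}\ge\sin\theta_0\}\bigr) + \nu\bigl(\{S_{\rho_\epsilon}<\sin\theta_0\}\bigr).
\]
The second term is bounded by $c\,\theta_0^2$ via Lemma~\ref{sinbound}, while the first, after a change of measure back to $\mu^*$ and the use of the first step, is at most $c_z(\sin\theta_0)^{-\beta}\mu^*(A) \le c_z\,\theta_0^{-\beta}(\eta/R)^\alpha$.

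Finally, balancing the two terms by choosing $\theta_0 = (\eta/R)^{\alpha/(\beta+2)}$ produces $\nu(A)\le c_z(\eta/R)^{2\alpha/(\beta+2)}$, establishing the lemma with the new universal exponent $\alpha':=2\alpha/(\beta+2)>0$. The main obstacle I expect is precisely the unboundedness of $d\nu/d\mu^*$ as $S_{\rho_\epsilon}\to 0$: these are the paths along which the conditional chordal curve approaches $z$ through a pinched boundary configuration, and they are overweighted under $\nu$ relative to $\mu^*$. Lemma~\ref{sinbound} is the quantitative statement that such configurations are rare enough under $\nu$ to make the balancing optimization succeed.
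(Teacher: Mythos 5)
Your proposal is correct and follows essentially the same route as the paper's proof: both bound the conditional probability by splitting on whether $S_{\rho_\epsilon}(z)\ge\sin\theta_0$, control the small-sine piece with Lemma~\ref{sinbound}, transfer the large-sine piece to $\Prob^*$ via the Radon--Nikodym derivative (bounded by $c\,\theta_0^{-\beta}$) and apply Lemma~\ref{approachlem}, then optimize $\theta_0=(\eta/R)^{\alpha/(\beta+2)}$. The only cosmetic difference is that you explicitly phrase the change of measure as a bound on $d\nu/d\mu^*$, while the paper applies Proposition~\ref{RNderiv} and Lemma~\ref{fastGreen} inline to jump directly to the $\E^*[S_{\rho_\epsilon}^{-\beta}\1\{\cdot\}]$ form.
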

\begin{pf}
Let $0 < \theta< \pi/2$ be arbitrary; we will fix its precise value
later. We apply Lemmas~\ref{sinbound} and~\ref{approachlem} with
the above to see that
\begin{eqnarray*}
&&\Prob\{\gamma[{\rho_\eta},{\rho_\epsilon}] \not\subseteq B_R(z)
| {\rho
_\epsilon}<\infty\} \\
&&\qquad = \Prob\{\gamma[{\rho_\eta},{\rho_\epsilon}] \not\subseteq
B_R(z) ;
S_{\rho_\epsilon}(z) \ge\sin(\theta) | {\rho_\epsilon}<\infty\}
\\
&&\qquad\quad{} + \Prob\{\gamma[{\rho_\eta},{\rho_\epsilon}] \not\subseteq
B_R(z) ;
S_{\rho_\epsilon}(z) < \sin(\theta) | {\rho_\epsilon}<\infty\} \\
&&\qquad \le c\E^*\bigl[S_{\rho_\epsilon}^{-\beta}(z)1\{\gamma[{\rho_\eta
},{\rho
_\epsilon}] \not\subseteq B_R(z) ; S_{\rho_\epsilon}(z) \ge\sin
(\theta
)\}\bigr] + c\theta^{2} \\
&&\qquad \le c\theta^{-\beta} \Prob^*\{\gamma[{\rho_\eta},{\rho
_\epsilon}]
\not\subseteq B_R(z)\} + c\theta^{2} \\
&&\qquad \le c\theta^{-\beta} \Prob^*\{\gamma[{\rho_\eta},T_z] \not
\subseteq
B_R(z)\} + c\theta^{2} \\
&&\qquad \le c\theta^{-\beta} (\eta/R)^{\alpha} + c\theta^{2},
\end{eqnarray*}
where $c$ is being used generically.
Thus by an appropriate choice of $\theta$, for example,
\[
\theta= (\eta/R)^{{\alpha}/({2+\beta})},
\]
we get the desired bound.
\end{pf}

\section{Multi-point Green's function}\label{multipoint}

In this section we consider two distinct points $z,w\in\Half$.
To simplify notation, we write
\begin{eqnarray*}
\xi &=& \xi_\epsilon= \xi_{z,\epsilon} = \inf\{t\dvtx \Upsilon_t(z)
\leq
\epsilon\},
\\
\chi &=& \chi_\delta= \chi_{w,\delta} = \inf\{t\dvtx \Upsilon_t(w) \leq
\delta\}.
\end{eqnarray*}
Although we will write $\xi,\chi$, it is important to remember
that these quantities depend on $z,\epsilon,w,\delta$. We let
$\Prob,\E$ denote probabilities and expectations for $\mbox{SLE}_\kappa$
from $0$ to $\infty$ in $\Half$ and $\Prob^*,\E^*$ for the
corresponding quantities for a two-sided radial through $z$.
The multi-point Green's function, which we write
\[
G(z,w) = G_\Half(z,w;0,\infty),
\]
roughly corresponds
to the probability that $\mbox{SLE}$ in $\Half$ from $0$ to $\infty$ goes
through $z$ and then through $w$. This quantity is not symmetric.
Although we do not have a closed from for this quantity, we can define
it precisely.\vadjust{\goodbreak}
\begin{definition*} The multi-point Green's function
$G(z,w)$ is defined by
\[
G(z,w) = G(z) \E^*[G_H(w;z,\infty)],
\]
where $H$ is the unbounded component of $\Half\setminus
\gamma(0,T_z]$.
\end{definition*}

It is worth noting that if $w$ is swallowed by the two-sided radial
$\mbox{SLE}$ curve before reaching $z$, this Green's function gives that event
weight zero since the curve $w$ is unreachable no matter how close the
curve was to $w$ before reaching $z$.

In~\cite{RS}, the exact formula for $G_\Half(z;0,\infty)$ was found by
considering the martingale $G_{H_t}(z,\gamma(t),\infty)$ and then using
It\^o's formula and scaling to find the ODE that it satisfies, which
could then be explicitly solved. When attempting the same technique
here, a three real variable PDE result, which does not immediately seem
to admit a closed form solution. A derivation of this PDE may be found
in Appendix~\ref{PDEcomp}.

The justification for this definition comes from the following
theorem. Implicit in the statement is that the limit can
be taken along any sequence of $\epsilon,\delta$ going to
zero.
\begin{theorem}\label{multitheorem} If $z,w \in\Half$, then
\[
\lim_{\epsilon,\delta\rightarrow0^+}
\epsilon^{d-2}
\delta^{d-2} \Prob\{\xi< \chi< \infty\}
= c_*^2 G(z,w) ,
\]
where $c_*$ is as defined in (\ref{mar41}).
\end{theorem}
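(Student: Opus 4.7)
The plan is to reduce the two-point problem to a one-point problem by sequentially using the domain Markov property at time $\xi$ together with Lemma \ref{fastGreen}, and then applying Girsanov (Proposition \ref{RNderiv}) to convert the initial chordal segment on $[0,\xi]$ into a two-sided radial measure, whose tip dynamics are governed by the explicit SDE \eqref{sde1} and whose ergodic behavior is captured by Lemma \ref{invdense}. The ergodic averaging will supply the extra factor of $c_*$ needed to match the target.

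Concretely, write $\Prob\{\xi<\chi<\infty\} = \E\bigl[\1\{\xi<\infty,\,\chi>\xi\}\,\Prob\{\chi<\infty\mid\F_\xi\}\bigr]$. The strong Markov property identifies $\gamma$ after $\xi$ with chordal $SLE_\kappa$ in $H_\xi$ from $\gamma(\xi)$ to $\infty$, so Lemma \ref{fastGreen} applied with $r=\delta/\Upsilon_{H_\xi}(w)$ (after truncating to $\Upsilon_{H_\xi}(w)\ge\delta^{1/2}$, the complement being negligible by Lemma \ref{july17.3}) gives $\Prob\{\chi<\infty\mid\F_\xi\}=c_*\,\delta^{2-d}\,G_{H_\xi}(w;\gamma(\xi),\infty)(1+o(1))$. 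Proposition \ref{RNderiv} rewrites the $\F_\xi$-integral as $\E[\1\{\xi<\infty\}F]=G(z)\,\epsilon^{2-d}\,\E^*[F/S_\xi^\beta]$ for $\F_\xi$-measurable $F$. Setting $F=\1\{\chi>\xi\}\,G_{H_\xi}(w;\gamma(\xi),\infty)$ yields
\[
\epsilon^{d-2}\delta^{d-2}\,\Prob\{\xi<\chi<\infty\}=c_*\,G(z)\,\E^*\!\left[\1\{\chi>\xi\}\,\frac{G_{H_\xi}(w;\gamma(\xi),\infty)}{S_\xi^\beta}\right](1+o(1)),
\]
so the theorem reduces to showing this $\Prob^*$-expectation converges to $c_*\,\E^*[G_H(w;z,\infty)]$.

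For this final step, introduce the intermediate scale $R=\epsilon^{1/2}$ and stopping time $\xi'=\inf\{t:\Upsilon_t(z)\le R\}$. By Lemma \ref{approachlem} under $\Prob^*$, the segment $\gamma[\xi',T_z]$ is contained in $B_{R^{1/2}}(z)$ with probability $1-O(\epsilon^{\alpha/4})$; on this good event the pairs $(H_{\xi'},\gamma(\xi'))$ and $(H_\xi,\gamma(\xi))$ are $R^{1/2}$-close near $z$, and Lemma \ref{GreensLemma} replaces $G_{H_\xi}(w;\gamma(\xi),\infty)$ by $G_{H_{\xi'}}(w;\gamma(\xi'),\infty)$, and ultimately by $G_H(w;z,\infty)$, with errors vanishing as $\epsilon\to 0$. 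On the same event the curve does not approach $w$ during $[\xi',\xi]$ since $R^{1/2}\ll|z-w|$ for small $\epsilon$, so $\1\{\chi>\xi\}=\1\{\chi>\xi'\}$, and the latter converges to $\1\{w\in H\}$, which is redundant against the convention $G_H(w;z,\infty)=0$ when $w\notin H$. Conditioning on $\F_{\xi'}$, the Markov property of two-sided radial and Lemma \ref{invdense} applied to $F(\theta)=\sin^{-\beta}\theta$ with elapsed radial time $-\log(\epsilon/R)/(2a)\to\infty$ give $\E^*[S_\xi^{-\beta}\mid\F_{\xi'}]=c_*+o(1)$ uniformly on $\F_{\xi'}$, where $I_F=c_*$ by \eqref{mar4.1}. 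Multiplying the two limits recovers the required factor.

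The main obstacle is ensuring these approximations hold in $L^1(\Prob^*)$ rather than merely in probability, since $S_\xi^{-\beta}$ can blow up when the tip approaches $z$ tangentially and $G_{H_\xi}(w;\gamma(\xi),\infty)$ can be large when the curve passes unusually close to $w$. The required uniform integrability combines the exponential relaxation of Lemma \ref{invdense} for $S_\xi^{-\beta}$ with the Beurling-based upper bound $G_D(w;z_1,z_2)\le C\,\inrad_D(w)^{d-2+\beta/2}\,\Delta_D^*(w;z_1,z_2)^{-\beta/2}$ from the discussion following \eqref{beurling}, and Lemma \ref{july17.3} to bound the probability that $\inrad_{H_\xi}(w)$ is anomalously small. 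The interplay between these bounds and the choice of intermediate scale $R=\epsilon^{1/2}$ has to be balanced carefully so that all error terms vanish jointly as $\epsilon,\delta\to 0^+$.
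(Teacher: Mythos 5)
Your overall architecture matches the paper's (truncate, apply the one-point Green's function at $w$ in $H_\xi$, convert to the two-sided radial measure, replace $G_{H_\xi}$ by $G_H$ via $R$-closeness and Lemma~\ref{approachlem}, then unwind) with one packaging difference: you use the exact Radon--Nikodym derivative from Proposition~\ref{RNderiv}, which introduces the explicit factor $S_\xi^{-\beta}$ that you then average out with Lemma~\ref{invdense}, whereas the paper uses Proposition~\ref{measurelimit}, which already bundles that ergodic averaging. Both routes are legitimate.

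However, there is a genuine gap in your handling of the truncation and of uniform integrability, and it is the central difficulty of the whole theorem. You claim that the complementary event $\{\Upsilon_{H_\xi}(w) < \delta^{1/2}\}$ is ``negligible by Lemma~\ref{july17.3}''; later you again appeal to Lemma~\ref{july17.3} ``to bound the probability that $\inrad_{H_\xi}(w)$ is anomalously small.'' Lemma~\ref{july17.3} is a one-point estimate: it bounds the chance that the future curve, in a fixed domain, approaches a single marked point. What you actually need to bound is the probability of the joint event $\{\xi<\infty,\ \inrad_\xi(w)<r\}$ (equivalently $\{\Upsilon_\xi(w)<\delta^{1/2}\}$) -- the event that the curve has already reached conformal radius $\epsilon$ around $z$ while simultaneously being within distance $r$ of $w$. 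No one-point estimate can control this: the issue is precisely the correlation between getting near $z$ and being near $w$, which is what Beffara's estimate is for. This is exactly where the paper invokes Lemma~\ref{mainBef} (``There exists $\alpha>0$ such that $\Prob\{\xi<\chi<\infty;\ \inrad_\xi(w)\le r\}\le c\,\epsilon^{2-d}\delta^{2-d}r^\alpha$''), which the paper lists as the one essential input it defers to Section~\ref{befSec}. Without it, the expectation $\E^*[\1\{\chi>\xi\}\,G_{H_\xi}(w;\gamma(\xi),\infty)/S_\xi^\beta]$ is not shown to be finite, let alone convergent, because the Beurling bound $G_D(w;z_1,z_2)\le c\,\inrad_D(w)^{d-2+\beta/2}\Delta_D^*(w;z_1,z_2)^{-\beta/2}$ blows up as $\inrad_\xi(w)\to 0$ and you have no control on how often that happens under $\Prob^*$ or under the conditioned chordal measure. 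Once you replace your appeal to Lemma~\ref{july17.3} with Lemma~\ref{mainBef} (and carry the truncation parameter $r$ along as a fourth scale to be sent to $0$ after $\epsilon,\delta$), the rest of your argument goes through.
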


When
\[
d = \biggl(1+\frac{\kappa}{8}\biggr) \wedge2
\]
is the dimension rather than simply $d = 1+\kappa/8$, this theorem
still defines an interesting quantity for $\kappa\ge8$. Since the
curve is space filling for $\kappa\ge8$, the limit is trivial and
\[
\lim_{\epsilon,\delta\rightarrow0^+}
\epsilon^{d-2}
\delta^{d-2} \Prob\{\xi< \chi< \infty\} = \Prob\{\xi_0 < \chi_0
\} =
c_*G(z,w).
\]
This agrees with the above definition of $G(z,w)$ since we may take
two-sided radial through $z$ for $\kappa\ge8$ to be the measure on
$\gamma$ stopped at the time the curve passes through $z$ and
\[
G_D(w;z_1,z_2) = 1\{w \in D\}.
\]
Since this case requires no further work, we will continue to assume
that $\kappa< 8$.

We will need one lemma that will follow from our work
on Beffara's estimate, which we will prove in Section~\ref{befSec}.
\begin{lemma}\label{mainBef} There exists $\alpha> 0$,
such that if
$z,w \in\Half$, then there exists $c = c_{z,w}< \infty,
$ such that for all $\epsilon,\delta,r >0$,
\[
\Prob\{\xi< \chi< \infty; \inrad_\xi(w) \le r\} \le c \epsilon
^{2-d}\delta^{2-d}r^\alpha.
\]
\end{lemma}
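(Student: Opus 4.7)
The approach is to decompose the event $\{\xi < \chi < \infty, \inrad_\xi(w) \le r\}$ dyadically in $\Upsilon_\xi(w)$ and combine three ingredients: Lemma \ref{fastGreen} for the post-$\xi$ return to $w$, the Beurling improvement on $S_\xi(w)$ coming from the fact that the curve has approached $w$ to within $r$, and the asymmetric Beffara estimate for joint hits of $z$ and $w$ at intermediate scales. By Koebe \eqref{koebe}, the event $\inrad_\xi(w) \le r$ combined with $\xi < \chi$ pins $\Upsilon_\xi(w)$ into the window $(\delta, 2r]$, which is exactly the range over which the dyadic sum will be taken.

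Concretely, Lemma \ref{fastGreen} applied in $H_\xi$ gives
\[
\Prob\{\chi < \infty \mid \F_\xi\} \le c\,\delta^{2-d}\,\Upsilon_\xi(w)^{d-2}\,S_\xi(w)^\beta,
\]
and a topological argument (using that $\gamma(\xi)$ lies within $2\epsilon$ of $z$) shows $\Delta^*_\xi(w) \gtrsim |z-w|$ outside the ``wrap'' event where the curve has closed around $w$ on both sides. On that favorable event, the Beurling bound \eqref{beurling} yields $S_\xi(w)^\beta \le c\,(\Upsilon_\xi(w)/|z-w|)^{\beta/2}$. Separately, since $\Upsilon_t(w)$ is monotone decreasing, the asymmetric Beffara estimate (proved alongside this lemma in Section \ref{befSec}) gives
\[
\Prob\{\xi < \infty, \Upsilon_\xi(w) \le 2\rho\} \le c\,\epsilon^{2-d}\rho^{2-d}|z-w|^{d-2}
\]
at any intermediate scale $\rho$. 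Decomposing dyadically on $\Upsilon_\xi(w) \in [\rho, 2\rho)$ for $\rho = 2^k\delta \le r$, each shell contributes at most
\[
c\,\epsilon^{2-d}\delta^{2-d}\,\rho^{\beta/2}\,|z-w|^{d-2-\beta/2},
\]
and the geometric sum in $\rho^{\beta/2}$ over $\rho \in [\delta, r]$ is bounded by $r^{\beta/2}$, producing the claimed bound with $\alpha = \beta/2 > 0$.

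The main obstacle is precisely the wrap scenario, where $\Delta^*_\xi(w)$ collapses to the scale of $\inrad_\xi(w)$ and the Beurling improvement disappears; there the bare estimate gives only $M_\xi(w) \le c\,\inrad_\xi(w)^{d-2}$, erasing the crucial factor $(\Upsilon_\xi(w)/|z-w|)^{\beta/2}$ needed for the geometric sum. To control this regime I would prove a supplementary lemma saying that the event ``the curve wraps around $w$ at scale $\rho$'' itself carries an extra probabilistic factor $\rho^{\eta}$ for some $\eta > 0$, enforced by combining the Beffara-style crosscut bound of Proposition \ref{julprop} with a topological statement that a wrap requires two independent approaches to $w$; this is exactly the type of estimate the authors announce will live in the topological and combinatorial parts of Section \ref{befSec}.
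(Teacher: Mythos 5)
Your dyadic decomposition in $\Upsilon_\xi(w)$, combined with Lemma \ref{fastGreen} for the post-$\xi$ step and the Beurling improvement to $S_\xi(w)$, is a genuinely different route from the paper and the ``no-wrap'' shell computation is correct: for $\Upsilon_\xi(w)\in[\rho,2\rho)$ and $\Delta^*_\xi(w)\gtrsim_{z,w}1$, the factor $\delta^{2-d}\rho^{d-2}S_\xi(w)^\beta\lesssim\delta^{2-d}\rho^{d-2+\beta/2}$ combines with Beffara's bound $\Prob\{\xi<\infty,\Upsilon_\xi(w)\le 2\rho\}\lesssim\epsilon^{2-d}\rho^{2-d}$ to give $\epsilon^{2-d}\delta^{2-d}\rho^{\beta/2}$ per shell, and the geometric sum over $\rho\in[\delta,2r]$ yields $r^{\beta/2}$. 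In contrast, the paper does not invoke Beffara's estimate at intermediate scales at all; it derives Lemma \ref{mainBef} from Proposition \ref{fromBef}, obtained by rerunning the renewal-time and combinatorial machinery (Proposition \ref{fundest}, Lemma \ref{comblem}) with the additional constraint $j\ge\ell$, so the $\rho^\alpha$ factor appears directly as $e^{-\alpha\ell}$ in the combinatorial sum. Your approach would give a concrete exponent $\alpha=\beta/2$ where the paper's is implicit, which is a modest gain \emph{if} the argument were complete.

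However, the argument is not complete, and the gap is not a small one. The entire difficulty of Beffara's estimate --- and hence of this refinement --- is concentrated precisely in your ``wrap'' event, the scenario illustrated in the right panel of Figure \ref{goodvsevil} where $\Delta^*_\xi(w)$ collapses to the scale of $\inrad_\xi(w)$ and the Beurling improvement vanishes. Your proposed supplementary lemma (that a wrap at scale $\rho$ carries an extra probabilistic factor $\rho^\eta$, via ``two independent approaches to $w$'') is exactly the content of Lemmas \ref{MainEstimate2}--\ref{MainEstimate4} and the combinatorial summation in Section \ref{befSec}. Those estimates are not a plug-in: they require the separating interval $I_t$ (whose well-definedness alone takes the paper's Appendix \ref{ItAppendix}), the topological control of Lemmas \ref{boundedLemma}--\ref{boundLemma} showing $H^w_t$ must be bounded when $\Delta^*(z)$ drops, the excursion-measure estimates of Lemmas \ref{FullExcursion} and \ref{jul16.lemma1}, and the iteration over renewal times to handle curves that wrap multiple times. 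In short, you have correctly reverse-engineered where the difficulty lies and which of the paper's tools would be needed, but the proof as written reduces the hard case to a lemma you have not proved and whose proof essentially \emph{is} the rest of Section \ref{befSec}. As a proof of Lemma \ref{mainBef}, this leaves the central case open.
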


More precise results than this are obtained in this paper, but this is
all that is required in this section.

Before going through the details of the proof, we briefly sketch the
argument. To estimate
\[
\Prob\{\xi< \chi< \infty\},
\]
we wish to show that this probability is carried mostly
on curves which get within $\epsilon$ of $z$ in conformal radius before
decreasing the conformal radius of $w$ much at all. To show that the
curves which do not do this are negligible, we use Lem\-ma~\ref{mainBef}.

On the event that the curve stays bounded away from $w$, we know the
Green's function for getting to $w$ stays uniformly bounded, allowing us
to use convergence of the conditioned measures $\E[\cdot\,| \xi<
\infty
]$ to $\E^*[\cdot]$, the two-sided radial measure, as measures on the
$\mbox{SLE}$ curve up until some fixed conformal radius $\eta\gg\epsilon$.

This would be everything if it were not for the fact that the tip of
the curves (the portion very near $z$) under the conditioned measure
versus the two-sided radial measure have very different distribution.
To handle this, we use Lemmas~\ref{approachlem} and~\ref
{approachlemcond} to show that under both measures the tip stays close
to $z$ most of the time in Euclidean distance, and then Lemma \ref
{GreensLemma} tells us that the Green's function for getting to $w$ is
insensitive to these changes.

%
\begin{figure}

\includegraphics{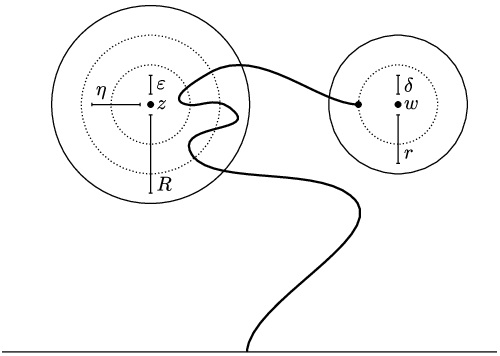}

\caption{A diagram of the proof of Theorem \protect\ref{multitheorem}.
Dotted circles represent conformal radii and solid circles refer to
geometric radii. The bold curve gives an example of the approximate
shape of a curve contributing to the leading order event.}
\label{GreensFig}
\end{figure}

To aid in the understanding of the proof, Figure~\ref{GreensFig} shows
diagrammatically the various distances considered and the approximate
shape of a curve in the main term.
\begin{pf*}{Proof of Theorem~\ref{multitheorem} given Lemma
\ref{mainBef}}
We first split according to how close we get to $w$ before getting
close to $z$. Fixing some $r < |z-w|/2$, by Lemma~\ref{mainBef} we see
that for some $\alpha> 0$
\begin{eqnarray*}
\Prob\{\xi< \chi< \infty\} & = &\Prob\{\xi< \chi< \infty; \inrad
_\xi
(w) > r\} \\
&&{} + \Prob\{\xi< \chi< \infty; \inrad_\xi(w) < r\} \\
& = &\Prob\{\xi< \chi< \infty; \inrad_\xi(w) > r\} + O(\epsilon
^{2-d}\delta^{2-d}r^\alpha).
\end{eqnarray*}

Let $\F_\xi$ denote the $\sigma$-algebra generated by the stopping
time $\xi$. By applying Lemma~\ref{fastGreen} to $w$ in the domain
$H_\xi$, we see if $\delta\leq r/2$,
\begin{eqnarray*}
&&\Prob\{\xi< \chi< \infty; \inrad_\xi(w) > r \mid\F_\xi\} \\
&&\qquad = 1\{\xi< \infty; \inrad_\xi(w) > r\} c_* \delta^{2-d}
G_{H_\xi}(w;\gamma(\xi),\infty) \bigl[1+O\bigl((\delta/r)^u\bigr)\bigr] .
\end{eqnarray*}

Applying Lemma~\ref{fastGreen} to $z$ in $\Half$ combined with the
previous equation implies
\begin{eqnarray*}
&&c_*^{-2} \epsilon^{d-2}\delta^{d-2} G_\Half(z;0,\infty)^{-1}
\Prob\{\xi< \chi< \infty; \inrad_\xi(w) > r\} \\
&&\qquad = \bigl[1+O\bigl(\epsilon^u + (\delta/r)^u\bigr)\bigr]
\E[G_{H_\xi}(w;\gamma(\xi), \infty)1\{\inrad_\xi(w)>r\} | \xi
<\infty].
\end{eqnarray*}
For simplicity of notation, given a stopping time $\tau$, we let
\[
\E_\tau[ \cdot] = \E[ \cdot\,| \tau<\infty] \quad\mbox{and}\quad G_{\tau
,r} =
G_{H_\tau}(w;\gamma(\tau), \infty)1\{\inrad_\tau(w) > r\},
\]
and hence we may rewrite this as
\begin{eqnarray*}
&&\Prob\{\xi< \chi< \infty; \inrad_\xi(w) > r\} \\
&&\qquad = c_*^2 \epsilon^{2-d}\delta^{2-d}G_\Half(z;0,\infty
)\bigl[1+O\bigl(\epsilon
^u+(\delta/r)^u\bigr)\bigr]\E_\xi[G_{\xi,r} ].
\end{eqnarray*}

We wish to transform this expression from the conditioned measure to
the two-sided radial measure, and from considering the situation at
time $\xi$ (the time it first gets within conformal radius $\epsilon$)
to $T_z$ (the time under the two-sided radial measure that $z$ is first
contained in the boundary of $H_{T_z}$). To do so we will pass through
a series of steps.

Fix some $\eta, R$ so that $\epsilon< \eta< R < |z-w|/2$. We wish to
control the difference
\begin{eqnarray*}
|\E_\xi[G_{\xi,r}] - \E_\xi[G_{\xi_\eta,r}]| & \le & \E_\xi
\bigl[|G_{\xi,r} -
G_{\xi_\eta,r}| 1\{\gamma[\xi_\eta,\xi] \subseteq B_R(z)\}\bigr] \\
&&{} + \E_\xi\bigl[|G_{\xi,r} - G_{\xi_\eta,r}| 1\{\gamma[\xi_\eta,\xi
] \not
\subseteq B_R(z)\}\bigr].
\end{eqnarray*}
By Lemma~\ref{GreensLemma} and the fact that the inradius about $w$
cannot decrease between $\xi_\eta$ and $\xi$ if $\gamma[\xi_\eta
,\xi]
\subseteq B_R(z)$, we see that
\[
\E_\xi\bigl[|G_{\xi,r} - G_{\xi_\eta,r}| 1\{\gamma[\xi_\eta,\xi]
\subseteq
B_R(z)\}\bigr] = O\bigl(r^{d-2-(\beta\wedge1)/2}R^{(\beta\wedge1)/2}\bigr).
\]
On the second term, the difference is no bigger than $O(r^{d-2})$ on an
event, which by Lemma~\ref{approachlemcond} is $O((\eta/R)^{\alpha'})$
for some $\alpha' > 0$. Putting it all together yields
\[
|\E_\xi[G_{\xi,r}] - \E_\xi[G_{\xi_\eta,r}]| = O\bigl(r^{d-2-(\beta
\wedge
1)/2}R^{(\beta\wedge1)/2} + r^{d-2}(\eta/R)^{\alpha'}\bigr).
\]

By Lemma~\ref{measurelimit}, we know for events in $\F_{\xi_\eta}$
we have
\[
\biggl|\frac{\mathrm{d} \Prob^*}{\mathrm{d} \Prob_\xi} - 1\biggr| =
O\bigl((\epsilon/\eta)^u\bigr),
\]
and hence we have
\[
|\E_\xi[G_{\xi_\eta,r}] - \E^*[G_{\xi_\eta,r}]| =
O\bigl(r^{d-2}(\epsilon
/\eta)^u\bigr).
\]

Analogously to before, consider splitting the difference
\begin{eqnarray*}
|\E^*[G_{\xi_\eta,r}] - \E^*[G_{T_z,r}]| & \le & \E^*\bigl[|G_{\xi_\eta
,r} -
G_{T_z,r}| 1\{\gamma[\xi_\eta,T_z] \subseteq B_R(z)\}\bigr] \\
&&{} + \E^*\bigl[|G_{\xi_\eta,r} - G_{T_z,r}| 1\{\gamma[\xi_\eta,T_z]
\not
\subseteq B_R(z)\}\bigr].
\end{eqnarray*}
By Lemma~\ref{GreensLemma} and the fact that the inradius about $w$
cannot decrease between $\xi_\eta$ and $T_z$ if $\gamma[\xi_\eta,T_z]
\subseteq B_R(z)$, we again see
\[
\E^*\bigl[|G_{\xi_\eta,r} - G_{T_z,r}| 1\{\gamma[\xi_\eta,T_z]
\subseteq
B_R(z)\}\bigr] = O\bigl(r^{d-2-(\beta\wedge1)/2}R^{(\beta\wedge1)/2}\bigr).
\]
The second term is on an event which by Lemma~\ref{approachlem}
is $O((\eta/R)^{\alpha'})$, and hence
\[
|\E^*[G_{\xi_\eta,r}] - \E^*[G_{T_z,r}]| = O\bigl(r^{d-2-(\beta\wedge
1)/2}R^{(\beta\wedge1)/2} + r^{d-2}(\eta/R)^{\alpha'}\bigr).
\]

We may easily see that
\[
\Prob^*\{\inrad_{T_z}(w) = 0\} \le\sum_{k \ge1} \Prob^*\{\inrad
_{\xi
_{1/k}}(w) = 0\} = 0
\]
by the fact that $\Prob^*$ is absolutely continuous with respect to
$\Prob$ until the stopping time $\xi_{1/k}$ combined with that fact
that two-sided radial $\mbox{SLE}$ generates a curve with probability one.
Hence, since $G_{T_z}(w;z,\infty) \ge0$, we have that
\[
\E^*[G_{T_z}(w;z,\infty) 1\{\inrad_{T_z}(w) > r\}] \rightarrow\E
^*[G_{T_z}(w;z,\infty)] \qquad\mbox{as } r \rightarrow0.
\]

Combining all these terms and by combining exponents, we see there
exists some $b > 0$ such that
\begin{eqnarray*}
&&
\epsilon^{d-2}\delta^{d-2}\Prob\{\xi< \chi< \infty\}\\
&&\qquad =
c_*^2G_\Half
(z;0,\infty)\bigl[1+O\bigl(\epsilon^b+(\delta/r)^b\bigr)\bigr]
\E^*[G_{T_z,r}] \\
&&\qquad\quad{} +O\bigl(r^b + (R/r)^b+(R/r)^b(\eta/R)^b
+(\epsilon/r)^b(\epsilon/\eta)^b\bigr).
\end{eqnarray*}
Thus by choosing $r$, $\eta$ and $R$ so that as $\epsilon, \delta
\rightarrow0$ we also have
\begin{eqnarray*}
r &\rightarrow&0,\qquad \delta/r \rightarrow0,\qquad
\epsilon/r \rightarrow0,\\
R/r&\rightarrow&0,\qquad \eta/R \rightarrow0,\qquad
\epsilon/\eta\rightarrow0,
\end{eqnarray*}
we see that
\[
\epsilon^{d-2}\delta^{d-2}\Prob\{\xi< \chi< \infty\} \rightarrow
c_*^2G_\Half(z;0,\infty)\E^*[G_{T_z}(w;z,\infty)]
\]
as desired.
\end{pf*}

This same argument generalizes to show that we can define higher-order
Green's functions of $\mbox{SLE}$ as well (those that give normalized
probabilities for passing through $k$ marked points in the interior),
and that the resulting multi-point Green's functions can be written in
terms of expectations under the two-sided radial measure of lower-order
Green's functions, for instance,
\begin{eqnarray*}
&&\lim_{\epsilon_1,\epsilon_2,\epsilon_3\rightarrow0}\epsilon
_1^{d-2}\epsilon_2^{d-2}\epsilon_3^{d-2}\Prob\{\xi_{\epsilon
_1,z_1}<\xi
_{\epsilon_2,z_2}<\xi_{\epsilon_3,z_3}\}\\
&&\qquad = c_*^3G_\Half(z_1;0,\infty)\E
^*[G_{H_{T_{z_1}}}(z_2,z_3;z_1,\infty)],
\end{eqnarray*}
where $\E^*$ is the two-sided radial measure passing through $z_1$.

Note that we may obtain the multi-point Green's function as defined in
the \hyperref[intro]{Introduction} by summing this over the case where it gets near to
$z$ then $w$ and the case where it gets near to $w$ then $z$.

The remainder of this paper is dedicated to providing a proof of Lemma~\ref{mainBef} and a sharpened version of Beffara's estimate.

\section{\texorpdfstring{Proof of Beffara's estimate and Lemma \protect\ref{mainBef}}
{Proof of Beffara's estimate and Lemma 3.1}}\label{befSec}

To complete our proof of the existence of multi-point Green's functions
we require a proof of Lemma~\ref{mainBef}. We also wish to prove
Befarra's estimate which is the following theorem.
\begin{theorem}[(Beffara's estimate)] \label{befsEst}
There exists a $c > 0$ such that for all $z$, \mbox{$w \in\Half$} with $\Im
(z),\Im(w) \ge1$ we have that
%
\begin{equation} \label{octbef}
\Prob\{\Upsilon_\infty(z)<\epsilon, \Upsilon_\infty(w)<\delta\}
\le
c\epsilon^{2-d}\delta^{2-d}|z-w|^{d-2}.
\end{equation}
\end{theorem}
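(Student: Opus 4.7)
My plan is to reduce to a single ordering of the two visits and then bound a Green's-function expectation. By the disjoint union
\[
\{\Upsilon_\infty(z)<\epsilon,\ \Upsilon_\infty(w)<\delta\}=\{\xi<\chi<\infty\}\cup\{\chi<\xi<\infty\}
\]
(with $\xi=\xi_{z,\epsilon}$ and $\chi=\chi_{w,\delta}$), and the $(z,\epsilon)\leftrightarrow(w,\delta)$ symmetry, it suffices to bound $\Prob\{\xi<\chi<\infty\}$. Applying the strong Markov property at $\xi$ together with Lemma \ref{fastGreen} to the remaining chordal $SLE_\kappa$ in $H_\xi$ from $\gamma(\xi)$ to $\infty$ gives, for $\delta$ at most a constant times $\inrad_{H_\xi}(w)$,
\[
\Prob\{\xi<\chi<\infty\}\le c\,\delta^{2-d}\,\E\bigl[\1\{\xi<\infty\}\,G_{H_\xi}(w;\gamma(\xi),\infty)\bigr],
\]
so the entire problem reduces to showing that this expectation is at most $c\,\epsilon^{2-d}|z-w|^{d-2}$.

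I would split the expectation according to whether the curve has already approached $w$ by time $\xi$. On the ``direct'' event $B=\{\inrad_{H_\xi}(w)\ge |z-w|/2\}$, since $d-2<0$ we have the crude pointwise estimate
\[
G_{H_\xi}(w;\gamma(\xi),\infty)\le c\,\inrad_{H_\xi}(w)^{d-2}\le c\,|z-w|^{d-2},
\]
and combining with the one-point bound $\Prob\{\xi<\infty\}\le c\,\epsilon^{2-d}$ coming from Lemma \ref{fastGreen} applied to $z$ in $\Half$ already gives the desired order on $B$. On $B^c$ I would use the Beurling-improved bound
\[
G_{H_\xi}(w;\gamma(\xi),\infty)\le c\,\inrad_{H_\xi}(w)^{d-2+\beta/2}\,\Delta_{H_\xi}^*(w;\gamma(\xi),\infty)^{-\beta/2}
\]
from \eqref{beurling}, together with the fact that $\Im(w)\ge 1$ keeps $\Delta^*$ bounded below by a constant. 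A dyadic decomposition of $\inrad_{H_\xi}(w)$ on $B^c$ then reduces everything to estimating
\[
\Prob\{\xi<\infty\,;\,\inrad_{H_\xi}(w)\le r\}\le c\,\epsilon^{2-d}\,r^\alpha
\]
for some $\alpha>0$, so that the dyadic sum converges and combines with the pointwise Green's-function bound to produce the missing $|z-w|^{d-2}$ factor.

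This probability estimate is precisely (a variant of) Lemma \ref{mainBef}, and proving it is the main obstacle; it is the rigorous form of the heuristic in Figure \ref{goodvsevil}, namely that curves which pass close to $w$ before getting near $z$ are rare. I would therefore prove Lemma \ref{mainBef} first and plug it into the decomposition above to obtain Theorem \ref{befsEst}. Ruling out the ``evil'' configuration in the rightmost panel of Figure \ref{goodvsevil}---where passage near $w$ before $z$ does not suppress a later close approach to $w$---requires the three ingredients flagged in the introduction: topological lemmas describing how the $SLE$ tip can separate $w$ from $\infty$; probabilistic lemmas built from Proposition \ref{julprop}, the Beurling estimate of Proposition \ref{beurling2}, and the local martingale $M_t(w)$ that convert those topological facts into quantitative suppression; and combinatorial lemmas organizing the many scales at which $\epsilon$, $\delta$, $|z-w|$, and the closest passing distance from $w$ interact.
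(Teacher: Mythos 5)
Your outer decomposition by ordering, followed by conditioning on $\F_\xi$ and applying Lemma~\ref{fastGreen}, is a natural move that mirrors the structure of the proof of Theorem~\ref{multitheorem}. The paper, however, proves Theorem~\ref{befsEst} by a different route: it first establishes the separated-points Proposition~\ref{main2}, which gives $\Prob\{\Delta_\infty(z)\le\epsilon,\Delta_\infty(w)\le\delta\}\le c\,\epsilon^{2-d}\delta^{2-d}$ when $|z-w|$ is bounded below, and then handles the nearby case $u:=|z-w|\le 1/9$ by conditioning on the first time the curve reaches distance $8u$ from $z$, conformally renormalizing via $g_\tau$, and applying Proposition~\ref{main2} and the growth/distortion theorems; the missing $|z-w|^{d-2}$ factor then falls out of the probability $\le c\,u^{2-d}$ of ever reaching scale $u$ around $z$, not out of a bound on the Green's function evaluated at $w$.

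Your concrete plan on the event $B^c$ contains a genuine gap. The assertion that $\Im(w)\ge 1$ keeps $\Delta^*_{H_\xi}(w;\gamma(\xi),\infty)$ bounded below by a constant is false: $\Delta^*$ is the \emph{maximum} of the distances from $w$ to the two boundary arcs $\partial_1 H_\xi$ and $\partial_2 H_\xi$, each of which contains a face of the curve $\gamma[0,\xi]$, so a path that swings past $w$ on both sides before heading to $z$ can drive both distances small no matter how large $\Im(w)$ is. This is exactly the ``evil'' configuration of Figure~\ref{goodvsevil}, and ruling it out is the substantive content of the topological lemmas about $I_t$ together with Lemmas~\ref{MainEstimate2}--\ref{MainEstimate4}. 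Once you lose the uniform $\Delta^*$ lower bound, the only pointwise bound left on $G_{H_\xi}(w;\gamma(\xi),\infty)$ is $c\,\inrad_{H_\xi}(w)^{d-2}$, and your dyadic sum becomes $\sum_k 2^{-k(d-2)}\Prob\{\xi<\infty;\inrad_\xi(w)\le 2^{-k}\}$, which converges only if the probability decays faster than $r^{2-d}$; a bound of the form $c\,\epsilon^{2-d}r^\alpha$ with a generic $\alpha>0$ does not suffice. What one actually needs, and what the renewal-time machinery delivers, is that the Beurling savings $S_{H_\xi}(w)\lesssim(\inrad/\Delta^*)^{1/2}$ \emph{is} available off an event of probability $O(r^\alpha)$; you cannot treat the Green's-function factor and the probability factor independently. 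You have correctly identified that a quantitative statement in the spirit of Lemma~\ref{mainBef} is the crux and that the three kinds of lemmas are the tools, but the shortcut you propose to finish from there does not close.
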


The hard work will be establishing the result when $z,w$ are far apart.
We use the notation introduced in Section~\ref{notations}. For later
convenience, we write
this proposition in terms of the usual radius rather than the
conformal radius, but it is easy
to convert to conformal radius using the Koebe $(1/4)$-theorem. We use
the notation
\[
\Delta_t(z) = \inrad_{H_t}(z).
\]

\begin{proposition} \label{main2}
For every $0 < \theta< \infty$, there exists
$c < \infty$, such that if $z,w \in\Half$ with
\[
\Im(z),\Im(w) \ge\theta\quad\mbox{and}\quad |z-w| \geq\theta/9 ,\vadjust{\goodbreak}
\]
then
\[
\Prob\{\Delta_\infty(z) \leq\epsilon,
\Delta_\infty(w) \leq\delta\}
\leq c \epsilon^{2-d} \delta^{2-d} .
\]
\end{proposition}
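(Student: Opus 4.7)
The plan is to reduce the problem to iterated applications of the Green's function estimate in Lemma \ref{july17.3}. By the Koebe $1/4$-theorem (converting between inradius and Euclidean distance at the cost of a universal factor) and a union bound over the two possible orderings in which the $SLE$ curve approaches $z$ and $w$, it suffices to bound $\Prob\{\xi < \chi < \infty\}$, where $\xi = \inf\{t : \inrad_{H_t}(z) \leq \epsilon\}$ and $\chi = \inf\{t : \inrad_{H_t}(w) \leq \delta\}$. Under the hypotheses $\Im(z), \Im(w) \geq \theta$ and $|z-w|\geq \theta/9$, the initial geometry is well-controlled, and in particular Lemma \ref{july17.3} applied at time $0$ already gives $\Prob\{\xi<\infty\} \leq C_\theta\, \epsilon^{2-d}$.

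By the strong Markov property at $\xi$ and a second application of Lemma \ref{july17.3}, this time to SLE in $H_\xi$ from $\gamma(\xi)$ to $\infty$ with target $w$,
\[
\Prob\{\chi<\infty \mid \F_\xi\} \leq C\, \delta^{2-d}\, \inrad_{H_\xi}(w)^{d-2}\left[\frac{\inrad_{H_\xi}(w)}{\Delta^*_{H_\xi}(w;\gamma(\xi),\infty)}\right]^{\beta/2}.
\]
On the ``good'' event $\{\inrad_{H_\xi}(w) \geq c_\theta |z-w|\}$ — the situation depicted in the left panel of Figure \ref{goodvsevil}, where the curve approaches $z$ directly without first pulling toward $w$ — this reduces to $C_\theta \delta^{2-d}$, and combining with $\Prob\{\xi<\infty\} \leq C_\theta\epsilon^{2-d}$ gives the desired bound $c_\theta\,\epsilon^{2-d}\delta^{2-d}$ for this portion.

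The remaining ``bad'' contribution I would attack by a dyadic decomposition. Let $A_k = \{2^{-k-1}|z-w| < \inrad_{H_\xi}(w) \leq 2^{-k}|z-w|\}$ and let $\tau_k = \inf\{t : \inrad_{H_t}(w) \leq 2^{-k}|z-w|\}$, so that $A_k \subseteq \{\tau_k \leq \xi\}$. Applying Lemma \ref{july17.3} at time $0$ with target $w$ gives $\Prob\{\tau_k<\infty\} \leq C_\theta\, 2^{-k(2-d)}$; the strong Markov property at $\tau_k$ and a further application of Lemma \ref{july17.3} in $H_{\tau_k}$ with target $z$ (noting $\inrad_{H_{\tau_k}}(z) \asymp |z-w|$ until $z$ has itself been approached) yields $\Prob\{\xi<\infty \mid \F_{\tau_k}\} \leq C_\theta\,\epsilon^{2-d}$. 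Combined with the conditional bound above, which on $A_k$ picks up a factor $2^{k(2-d)}$ from the $\inrad_{H_\xi}(w)^{d-2}$ term, the two exponential factors in $k$ naively cancel; summability in $k$ must therefore be forced out of the Beurling factors $[\inrad/\Delta^*]^{\beta/2}$ in at least one of the Green's function applications.

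Extracting this geometric decay is the main obstacle. Heuristically, once the curve has dipped to depth $\asymp 2^{-k}|z-w|$ around $w$ without yet reaching $z$, the resulting ``neck'' of the domain near $w$ or near the curve should force $[\inrad/\Delta^*]^{\beta/2}$ to contribute a factor $2^{-k\alpha}$ for some $\alpha > 0$; but as the right panel of Figure \ref{goodvsevil} illustrates, there are configurations in which an early approach to $w$ actually makes a second approach \emph{easier}, and any such estimate must carefully rule out these ``returning'' scenarios. I expect this is precisely what the topological, probabilistic and combinatorial lemmas occupying the remainder of the paper are organized to deliver, and would combine them with the dyadic framework above to close the sum and obtain the stated bound $c\,\epsilon^{2-d}\delta^{2-d}$.
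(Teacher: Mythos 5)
Your reduction to Lemma \ref{july17.3}, the union bound over orderings, and the split into a ``good'' and ``bad'' event are all sensible, and you correctly identify where the argument must break the na\"ive cancellation of exponentials. But the proposal has a genuine gap, and in fact two distinct problems. First, your step $\Prob\{\xi<\infty\mid\F_{\tau_k}\}\le C_\theta\,\epsilon^{2-d}$ does not hold: the parenthetical assertion that $\inrad_{H_{\tau_k}}(z)\asymp|z-w|$ ``until $z$ has itself been approached'' is false, since the curve may already have come arbitrarily close to $z$ (at any scale $> \epsilon$) before halving its distance to $w$ for the $k$th time. A single dyadic index tracking only $\inrad_{H_\xi}(w)$ is too coarse; the approaches to $z$ and $w$ interleave, and you must track the joint scale $(i_l,j_l)$ at a whole sequence of well-chosen times. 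This is exactly why the paper introduces the legal sequences $\pi$ in $\seq_k(i,j,0)$ rather than a one-parameter decomposition. Second, even if that were repaired, the paragraph where you say the geometric decay ``must be forced out of the Beurling factors'' and that you ``expect'' the remainder of the paper delivers it is not a proof --- it is the entire content of the proof, and it is precisely where the right panel of Figure~\ref{goodvsevil} defeats a direct argument, since after an approach to $w$ the factor $[\inrad_{H_\xi}(w)/\Delta^*_{H_\xi}(w)]^{\beta/2}$ can be of order one.

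The missing idea is the separating line $\imax$ between $z$ and $w$ and the renewal structure built on the crosscut $I_t$. The paper's renewal times $\tau_k$ are defined so that after each halving the curve must \emph{return to} $I_t$, and the decisive mechanism (Lemmas~\ref{jul16.lemma1}, \ref{FullExcursion}, and especially \ref{MainEstimate2}--\ref{MainEstimate4}) is that once the curve has ``pinched'' around one of the marked points --- made $\Delta^*_t$ small --- it is far from $I_t$, so the excursion-measure estimate forces a factor $\asymp s^{\beta/2}$ against the event of ever crossing back. This supplies the per-step geometric decay $e^{-\alpha K_l}$ in Proposition~\ref{fundest}, and the combinatorial Lemma~\ref{comblem} then makes the sum over legal sequences converge. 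Without the separating-line/crosscut construction (and the accompanying topological Lemmas \ref{boundedLemma}--\ref{boundLemma} that guarantee the pinching configuration actually occurs) there is no source of summability in your dyadic sum, and the argument does not close.
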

\begin{pf*}{Proof of Theorem~\ref{befsEst} given Proposition
\ref{main2}}
Without loss of generality we assume that $1 \leq\Im(z) \leq\Im(w)$.
We first
claim that is suffices to prove (\ref{octbef}) when $1 = \Im(z) \leq
\Im
(w)$. Indeed, if
this is true and $r > 1$, scaling implies that
\begin{eqnarray*}
\Prob\{\Upsilon_\infty(rz)<\epsilon, \Upsilon_\infty(rw)<\delta\}
& = & \Prob\{\Upsilon_\infty(z) < \epsilon/r, \Upsilon_\infty(w) <
\delta
/r\}\\
& \leq & c (\epsilon/r)^{2-d} (\delta/r)^{2-d} |z-w|^{d-2} \\
& < & c \epsilon^{2-d} \delta^{2-d}
|rz - rw|^{d-2}.
\end{eqnarray*}

Suppose $\epsilon>
|z-w|/10$. Then, using the one-point estimate Lemma~\ref{fastGreen},
we get
\begin{eqnarray*}
\Prob\{\Upsilon_\infty(z)<\epsilon, \Upsilon_\infty(w)<\delta\}&
\leq&\Prob\{\Upsilon_\infty(w) < \delta\}\\
& \leq &c \delta^{2-d} \\
& \leq &c 10^{2-d}
\epsilon^{2-d} \delta^{2-d} |w-z|^{d-2}.
\end{eqnarray*}
A similar argument with $\delta$ shows that it suffices to prove
(\ref{octbef}) with $\Im(z) = 1$ and $\epsilon,\delta< |z-w|/10$.
If $|z-w| \geq1/9$, we can apply Proposition~\ref{main2} directly.
So for the remainder of the proof, we let $u = |z-w|$ and assume
\[
1 = \Im(z) \leq\Im(w),\qquad u \leq\frac19,\qquad
\epsilon,\delta\leq\frac u{10}.
\]

We will use the growth and distortion theorems which we now
recall (see, e.g.,~\cite{Lbook}, Section 3.2).
Suppose $f\dvtx\Disk\rightarrow\C$ is a univalent function
with $f(0) = 0, |f'(0)| = 1$. Then if $|\zeta| < 1$,
%
\begin{eqnarray} \label{growth}
\frac{|\zeta|}{(1 + |\zeta|)^2} &\leq&|f(\zeta)|
\leq\frac{|\zeta|}{(1-|\zeta|)^2} ,
\\
%
\label{distortion}
\frac{1-|\zeta|}{(1+|\zeta|)^3} &\leq&|f'(\zeta)| \leq\frac{1
+|\zeta|}
{(1 - |\zeta|)^3}.
\end{eqnarray}

Let $\tau= \inf\{t\dvtx |\gamma(t) - z| = 8u\}
= \inf\{t\dvtx \Delta_t(z) = 8u\}$. The triangle inequality implies
that $7u \leq\Delta_\tau(w) \leq9u$.
Lemma~\ref{fastGreen}
implies that
%
\begin{equation} \label{apr71}
\Prob\{\tau< \infty\} \leq c u^{2-d}.
\end{equation}
Let $g_\tau$ be the usual conformal map, and let $h = s g_\tau$ where
$s > 0$ is chosen so that $\Im(h(z)) = 1$. By the Schwarz lemma and the
Koebe $(1/4)$-theorem, $4u \leq\Upsilon_{H_{\tau}}(z) \leq16u$,
and since $\Upsilon_\Half(h(z)) = 1$,
\[
\frac{1}{16u} \leq|h'(z)| \leq\frac{1}{4u} .
\]
Since $h$ is a conformal transformation of the disk of radius $8u$
about $z$, (\ref{growth}) implies
\[
\tfrac{4}{81}
\leq(8/9)^2 u |h'(z)| \leq
|h(w) - h(z)| \leq(8/7)^2 u |h'(z)| \leq\tfrac{16}{49}.
\]
Since $\epsilon\leq u/10 = (8u)/80$, if $|z-\zeta| \leq\epsilon$,
(\ref{growth})
implies
\[
|h(\zeta) - h(z)| \leq\biggl( \frac{80}{79}\biggr)^2 \epsilon
|h'(z)| \leq\biggl( \frac{80}{79}\biggr)^2 \frac{\epsilon}{4u}
\leq\frac{2\epsilon}
{7u}.
\]
Applying (\ref{distortion}), we can see that
\[
|h'(w)| \leq\frac{(10/9)}{(8/9)^3} |h'(z)| \leq\frac{(10/9)}{(8/9)^3}
\frac{1}{4u} .
\]
Applying (\ref{growth}) to the disk of radius $7u$ about $w$ and using
$
\delta\leq u/10 = (7u)/70$, we see that for $|\zeta- w| \leq\delta$,
\[
|h(\zeta) - h(w) | \leq\biggl(\frac{70}{69}\biggr)^2 \delta|h'(w)|
\leq\biggl(\frac{70}{69}\biggr)^2 \delta\frac{(10/9)}{(8/9)^3}
\frac{1}{4u} \leq\frac{3 \delta}{7u}.
\]

Using the estimates of the previous paragraph, we can see by
conformal invariance and the Markov property, that
\[
\Prob\{\Delta_\infty(z) \leq\epsilon, \Delta_\infty(w) \leq
\delta\mid\tau< \infty\}
\]
is bounded above by the supremum of
\[
\Prob\{\Delta_\infty(z') \leq\epsilon', \Delta_\infty(w') \leq
\delta' \},
\]
where the supremum is over
\[
\Im(z') = 1,\qquad \frac4{81} \leq|z'-w'| \leq\frac{16}{49} ,\qquad
\epsilon' \leq\frac{2 \epsilon}{7u} ,\qquad \delta' \leq\frac{3 \delta
}{7u}.
\]
Proposition~\ref{main2} implies that there exists $c'$ such that this supremum
is bounded by
\[
c' (\epsilon')^{2-d} (\delta')^{2-d} \leq
c \epsilon^{2-d} \delta^{2-d}
u^{2(d-2)}.
\]
If we combine this with (\ref{apr71}), we get
\[
\Prob\{\Delta_\infty(z') \leq\epsilon', \Delta_\infty(w') \leq
\delta' \} \leq c \epsilon^{2-d} \delta^{2-d} u^{d-2},
\]
which is what we needed to prove.
\end{pf*}

By an analogous argument to how we obtained Theorem~\ref{befsEst} from
Proposition~\ref{main2}, we may obtain Lemma~\ref{mainBef} from
Proposition~\ref{fromBef}.
\begin{proposition}\label{fromBef}
For every $0 < \theta< \infty$, there exists $c < \infty$ and $\alpha
> 0$ such that if $z,w \in\Half$ with
\[
\Im(z),\Im(w) \ge\theta\quad\mbox{and}\quad |z-w| \ge\theta/9,\vadjust{\goodbreak}
\]
then for $\rho> \delta$
%
\begin{equation}\label{main34}
\Prob\{\Delta_\infty(z) \le\epsilon, \Delta_\infty(w) \le\delta,
\Delta_\sigma(w) \le\rho\} \le c \epsilon^{2-d}\delta^{2-d}\rho
^\alpha,
\end{equation}
where $\sigma= \inf\{ t \dvtx \Delta_t(z) \le\epsilon\mbox{ or } \Delta_t(w)
\le\delta\}$.
\end{proposition}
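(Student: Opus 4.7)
The plan is to mirror the strategy for Proposition \ref{main2} (Beffara's estimate), adding a dyadic decomposition of $\Delta_\sigma(w)$ to extract the extra $\rho^\alpha$ factor. The main case is $\sigma = \xi_\epsilon$---the curve reaches $\epsilon$-close to $z$ before $\delta$-close to $w$---on which the constraint $\Delta_\sigma(w)\le \rho$ is the genuinely restrictive ingredient.

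On this main case, for $k\ge 0$ set
\[
E_k = \{\sigma = \xi_\epsilon,\ \Delta_\sigma(w)\in(\rho\cdot 2^{-k-1},\ \rho\cdot 2^{-k}]\}.
\]
Since $t\mapsto \Delta_t(w)$ is non-increasing, $E_k\subseteq\{\Delta_\infty(z)\le\epsilon,\ \Delta_\infty(w)\le \rho\cdot 2^{-k}\}$, so Proposition \ref{main2} applied with parameters $(\epsilon,\rho\cdot 2^{-k})$ gives
\[
\Prob(E_k)\le c\,\epsilon^{2-d}(\rho\cdot 2^{-k})^{2-d}.
\]
By the strong Markov property at $\sigma$ together with Lemma \ref{july17.3} applied at $w$ in the slit domain $H_\sigma$, the conditional probability that the curve afterwards reaches within $\delta$ of $w$ is bounded by
\[
c\,\Big(\frac{\delta}{\Delta_\sigma(w)}\Big)^{2-d}\Big[\frac{\Delta_\sigma(w)}{\Delta_\sigma^*(w;\gamma(\sigma),\infty)}\Big]^{\beta/2}.
\]
On $E_k$ the tip $\gamma(\sigma)$ lies within $O(\epsilon)$ of $z$, so the hypotheses $\Im(w)\ge\theta$ and $|z-w|\ge\theta/9$ ensure the real line contributes an arc of $\partial H_\sigma$ at distance $\gtrsim\theta$ from $w$, yielding $\Delta_\sigma^*(w;\gamma(\sigma),\infty)\ge c_\theta$. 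The conditional bound therefore reads $c\,(\delta/(\rho\cdot 2^{-k}))^{2-d}(\rho\cdot 2^{-k})^{\beta/2}$. Multiplying the two bounds and summing the resulting geometric series in $k$ gives
\[
\sum_{k\ge 0}c^2\,\epsilon^{2-d}\delta^{2-d}(\rho\cdot 2^{-k})^{\beta/2}\le c'\,\epsilon^{2-d}\delta^{2-d}\rho^{\beta/2},
\]
and one takes $\alpha = \beta/2>0$ (positive because $\kappa<8$).

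The complementary case $\sigma = \chi_\delta$ is handled analogously but with the roles of the two points interchanged, combined with the assumption $\delta\le\rho$ so that the required power can be rewritten as a power of $\rho$. The main obstacle will be the uniform lower bound $\Delta_\sigma^*(w;\gamma(\sigma),\infty)\ge c_\theta$ on $E_k$: one must show that, regardless of the detailed shape of $\gamma[0,\sigma]$, one of the two arcs of $\partial H_\sigma$ joining $\gamma(\sigma)$ to $\infty$ retains a macroscopic portion of $\R$ at distance at least $c_\theta$ from $w$. This follows from the separation hypotheses via the crosscut and Beurling tools developed in the paper, notably Proposition \ref{julprop} and the harmonic-measure estimates from Section \ref{notations}.
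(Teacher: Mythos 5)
The approach is genuinely different from the paper's: the paper reruns its renewal-time/legal-sequence machinery and simply restricts the $j$-summation to $j\geq\ell$ to extract the $\rho^\alpha$; you instead decompose dyadically on $\Delta_\sigma(w)$ at the first-passage time $\sigma$ and try to close via Proposition~\ref{main2} plus a Markov step and the one-point estimate Lemma~\ref{july17.3}. Unfortunately, the step you flag as ``the main obstacle'' is a genuine gap that the cited tools do not fill.

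Specifically, the claim that $\Delta_\sigma^*(w;\gamma(\sigma),\infty)\geq c_\theta$ on $E_k$ is false in general, and it is not a consequence of Proposition~\ref{julprop} or the Beurling/harmonic-measure estimates. Before reaching $z$, the curve can approach $w$ closely in such a way that \emph{both} prime-end arcs $\partial_1 H_\sigma$ and $\partial_2 H_\sigma$ come within Euclidean distance comparable to $\Delta_\sigma(w)$ of $w$ (e.g., wrapping around $w$ and escaping without disconnecting it), making $\Delta_\sigma^*(w)\asymp\inrad_\sigma(w)\approx\rho 2^{-k}$ rather than $\geq c_\theta$. That this degenerate geometry must be confronted is exactly why the paper develops the topological lemmas (via $I_t$) and the renewal-time estimates Lemmas~\ref{MainEstimate2}--\ref{MainEstimate4}: those lemmas are precisely devoted to showing that the event $\{\Delta^*_{\tau_k}(w)\ \text{small}\}$ is itself improbable, with a power-saving $s^\alpha$. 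If $\Delta^*$ were always bounded below by a constant, those lemmas would be unnecessary and Beffara's estimate itself would follow from the one-point bound in two lines.

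Quantitatively, if you drop the $\bigl[\Delta_\sigma(w)/\Delta^*_\sigma(w)\bigr]^{\beta/2}$ factor (as you must when $\Delta^*_\sigma(w)\asymp\Delta_\sigma(w)$), the Markov step gives only $c\,(\delta/(\rho 2^{-k}))^{2-d}$, so each dyadic term contributes $\asymp\epsilon^{2-d}\delta^{2-d}$ and the sum over the $O(\log(\rho/\delta))$ admissible values of $k$ yields $c\,\epsilon^{2-d}\delta^{2-d}\log(\rho/\delta)$ --- no $\rho^\alpha$ at all. So the whole gain hinges on exactly the assertion you deferred. Separately, the complementary case $\sigma=\chi_\delta$ is not really ``analogous'': there $\Delta_\sigma(w)=\delta\leq\rho$ makes the constraint vacuous, and the roles of $z,w$ cannot be swapped symmetrically because no constraint of the form $\Delta_\sigma(z)\leq\rho$ is available; this case also needs an argument that actually produces the $\rho^\alpha$.
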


This proposition will follow immediately from the work required to show
Proposition~\ref{main2}.

To prove the proposition, we will show that there exists a $c < \infty$
such that
(\ref{main34}) holds if $|z-w| \ge2\sqrt{2}$ and $\Im(z),(w) \ge1$.
By scaling one can easily deduce the result for all $\theta>0$ with
a $\theta$-dependent constant.
We fix $z,w$ with $|z-w| \ge2\sqrt{2}$ and $\Im(z),\Im(w) \ge1$, and
denote by $\imax$ some fixed vertical or diagonal line such that
%
\begin{equation}\label{iCond}
\dist(z,\imax),\dist(w,\imax) \ge1 ,
\end{equation}
and $z,w$ lie in different components of $\Half\setminus\imax$. We will
further assume, without loss of generality, that $z$ is in the
component of $\Half\setminus\imax$ which contains arbitrarily large
negative real numbers in it's boundary (more informally that $z$ is in
the left component).

\subsection{An excursion measure estimate}

Our main result will require an estimate of the ``distance'' between
two boundary arcs in a simply connected domain. We will use excursion
measure to gauge the distance; we could also use extremal distance, but
we find excursion measure more convenient.

Suppose $\eta$ is a crosscut in $\Half$ with $-\infty< \eta(1^-)
\leq\eta(0^+) \le-1$. Let
$H_\eta$ denote the unbounded component of $\Half\setminus
\eta$.
Let $\exc(\eta) = \exc_{\Half_\eta}(\eta,[0,\infty))$
denote the excursion measure between $\eta$ and $[0,\infty)$ in
$H_\eta$, the definition of which we now
recall (see~\cite{Lbook}, Section 5.2, for more details).
If $z \in H_\eta$, let $h_\eta(z)$ be the probability that a Brownian
motion starting at $z$ exits $H_\eta$ at $\eta$. For $x \geq0$,
let $\partial_y h_\eta(x)$ denote the partial derivative. Then
\[
\exc(\eta) = \int_0^\infty\partial_y h_\eta(x) \,dx.
\]

The excursion measure $\exc_D(V_1,V_2)$ is defined for any domain
and boundary arcs $V_1,V_2$ in a similar way and is a conformal
invariant. If $V_2$ is smooth, then we can compute
$\exc_D(V_1,V_2)$ by a similar integral
\[
\exc_D(V_1,V_2) = \int_{V_2} \partial_{\mathbf{n}} h_{V_1}(z)
|dz| ,
\]
where $\mathbf{n}$ denotes the inward normal. We need the following
easy estimate.
\begin{lemma} \label{lemmaoct21}
There exist $c_1,c_2$ such that if $\eta$ is a crosscut in $\Half$
with $-\infty<
\eta(1^-) \leq\eta(0^+)=-1$ and $\diam(\eta) \leq1/2$, then
\[
c_1 \diam(\eta) \leq\exc(\eta) \leq c_2 \diam(\eta).
\]
\end{lemma}
\begin{pf*}{Sketch of proof} In fact, we get an estimate
\[
\partial_{y} h_{\eta}(x)
\asymp\frac{\diam(\eta)}{(x+1)^2}.
\]
The key estimate used here is the fact that
that if $\Re(z) \geq0$,
\[
h_\eta(z) \asymp\frac{\Im(z) \diam(\eta)}
{(|z|+1)^2} .
\]
\upqed\end{pf*}
\begin{lemma}\label{jul16lemma1}
There exists a $C < \infty$ such that the following is true.
Suppose $H \subset\C$ is a half-plane bounded by the line $L =
\partial H$,
$D$ is a simply connected subdomain of $\Half$ and $z \in\partial D$
with
$d(z,L) > \frac{1}{2}$.
Suppose $I$ is a subinterval of $ L \cap\partial D$. Then for every
$\epsilon< \frac{1}{2}$, the
excursion measure between $I$ and $V:= \partial D \cap\{w\dvtx |w-z| \leq
\epsilon\}$ is bounded above by $C
\epsilon^{1/2}$.
\end{lemma}
\begin{pf} Without loss of generality we assume that $H = \Half,
z = i/2$. Let $h(w)$ denote the probability that a Brownian
motion starting at $w$ exits $D$ at $V$. Then the excursion measure is
exactly
\[
\int_I \partial_y h(x) \,dx .
\]
Hence it suffices to give an estimate
%
\begin{equation} \label{jul172}
\partial_y h(x) \leq c \epsilon^{1/2}
[1 \wedge x^{-2}].
\end{equation}
For $|x| \leq4$, this follows from the Beurling estimate.
For $|x| \geq4$, we first
consider the excursion ``probability'' to reach $\Re(w) = x/2$. By
the gambler's ruin estimate, this is bounded by
$O(|x|^{-1})$. Then we need to consider the probability that
a Brownian motion starting at $z'$ with $\Re(z') = x/2$ reaches
the disk of radius $1$
about $z$ without leaving $D$. By comparison with the same probability
in the domain $\Half$, we see that this is bounded above by $O(|x|^{-1})$.
Finally from
there we need to hit $V$ which contributes a factor of $O(\epsilon
^{1/2})$ by
the Beurling estimate. Combining these estimates gives (\ref{jul172}).
\end{pf}
\begin{lemma}\label{FullExcursion}
There exists $c > 0$ such that the following holds.
Let $D$ be a simply connected domain, and let $\gamma$ be a
chordal $\mbox{SLE}_\kappa$
path from $z_1$ to $z_2$ in $D$. Let $\eta\dvtx(0,1) \rightarrow D$ be a
crosscut in
$D$. Let $\xi\dvtx (0,1) \rightarrow D$ be another
crosscut with $\xi(0^+) = z_1$, and let $D_1,D_2$ denote the
components of $D \setminus\xi$. Suppose $\eta\subset D_1$ and
$z_2 \in\partial D_2$.
Then,
\[
\Prob\{\gamma(0,\infty) \cap\eta(0,1) \neq\varnothing\} \le c
\exc
_{D}(\eta,\xi)^
{\beta}.
\]
\end{lemma}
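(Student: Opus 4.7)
The plan is to reduce to Proposition~\ref{julprop} by conformal invariance, convert the resulting Euclidean diameter into an excursion measure bound via Lemma~\ref{lemma.oct21}, and close the loop with a Koebe-type comparison of conformal scales.

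First I would use the conformal invariance of both $SLE_\kappa$ and excursion measure to reduce to the case $D=\Half$, $z_1=0$, $z_2=\infty$. In this normalization $\xi$ is a crosscut in $\Half$ from $0$ to some real point $a$, which by reflection I take to be negative. Writing $D_1$ for the bounded component of $\Half\setminus\xi$, the crosscut $\eta$ has endpoints $p_1\ge p_2$ lying on $[a,0]$. If $p_1=0$ then $\eta$ meets $\xi$ at $0$, forcing $\exc_D(\eta,\xi)=\infty$ and the bound is vacuous, so assume $p_1<0$. Applying the scaling $z\mapsto z/|p_1|$ of $\Half$, which preserves $SLE_\kappa$ from $0$ to $\infty$, Proposition~\ref{julprop} directly yields
\[
\Prob\{\gamma\cap\eta\ne\emptyset\}\le c\,\bigl(\diam(\eta)/|p_1|\bigr)^\beta.
\]

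Next I would conformally map $D_1\to\Half$ via $g$ with $g(0)=0$, $g(a)=\infty$, $g(p_1)=-1$, which forces $g(\xi)=[0,\infty)$ and $g([a,0])=(-\infty,0]$; then $g(\eta)$ is a crosscut of $\Half$ with right endpoint $-1$ and left endpoint in $(-\infty,-1]$. By conformal invariance of excursion measure together with Lemma~\ref{lemma.oct21} (assuming $\diam(g(\eta))\le 1/2$, the opposite case being handled by the trivial probability bound),
\[
\exc_D(\eta,\xi)=\exc_\Half(g(\eta),[0,\infty))\asymp\diam(g(\eta)).
\]

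The technical heart is to compare $\diam(\eta)/|p_1|$ with $\diam(g(\eta))$. Letting $M_0(z)=z/|p_1|$ so that $\diam(M_0(\eta))=\diam(\eta)/|p_1|$, consider $\phi=g\circ M_0^{-1}:M_0(D_1)\to\Half$, which satisfies $\phi(0)=0$, $\phi(-1)=-1$, $\phi(-|a|/|p_1|)=\infty$. Because $-1$ is an interior point of the smooth boundary arc $(-|a|/|p_1|,0)$ of $M_0(D_1)$, Schwarz reflection extends $\phi$ analytically across it, and the Koebe distortion theorem applied to this reflection on a disk around $-1$ gives $|\phi'(-1)|\gtrsim 1$, uniformly in the shape of $\xi$; the intuition is that $\phi$ sends a bounded subdomain of $\Half$ onto all of $\Half$ while fixing two boundary points, so it cannot contract near $-1$ by more than a universal factor. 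Consequently,
\[
\diam(g(\eta))\gtrsim|\phi'(-1)|\,\diam(M_0(\eta))\gtrsim\diam(\eta)/|p_1|,
\]
valid when $\eta$ is small enough near $p_1$ for the linearization, with the remaining case absorbed into the trivial bound.

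The main obstacle will be establishing the uniform Koebe-type lower bound $|\phi'(-1)|\gtrsim 1$. This is essentially a Julia--Carath\'eodory angular derivative statement for a conformal map sending a bounded subdomain of $\Half$ onto $\Half$ with prescribed boundary correspondence; making it quantitative and uniform across all admissible $\xi$ is where the geometric care is concentrated.
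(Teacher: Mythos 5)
Your plan shares the two essential ingredients with the paper's proof, namely Proposition~\ref{julprop} for the $SLE$ hitting bound and Lemma~\ref{lemma.oct21} to convert excursion measure into a diameter, but it diverges on the diameter comparison, and that divergence is a genuine gap.

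The pointwise estimate $|\phi'(-1)|\gtrsim 1$ is correct; indeed $|\phi'(-1)|\ge 1$ follows directly from Schwarz--Pick applied to the reflected inverse $\phi^{-1}:\C\setminus[0,\infty)\to\tilde D\subset\C\setminus[0,\infty)$, which fixes $-1$. But this is a purely local statement. Koebe distortion controls $\phi$ only on a disk about $-1$ of radius comparable to $\rho:=\dist(-1,\partial\tilde D)$, and $\rho$ depends on $\xi$ and on $|a|/|p_1|$, not on $\eta$. If $\xi$ passes within distance $\ll\diam(M_0(\eta))$ of $p_1$, or if $a$ is close to $p_1$, the linearization is unavailable on most of $M_0(\eta)$, and a conformal map from a proper subdomain of $\Half$ onto $\Half$ has no a priori lower bound on $|\phi'|$ away from $-1$. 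Delegating these regimes to ``the trivial bound'' amounts to asserting that $\exc_D(\eta,\xi)$ is bounded below by a constant whenever $M_0(\eta)$ escapes $B_{\rho/2}(-1)$, which is precisely the kind of global quantitative control that the local Koebe estimate does not supply.

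The paper circumvents the conformal map $g$ entirely with a one-line monotonicity argument. After normalizing so that $\eta(0^+)=-1$, the crosscut $\xi$ separates $\eta$ from $(0,\infty)$ inside $H_\eta$, so every Brownian excursion from $\eta$ to $(0,\infty)$ in $H_\eta$ must first cross $\xi$; by the strong Markov property this gives $\exc_\Half(\eta,[0,\infty))\le\exc_D(\eta,\xi)$. Lemma~\ref{lemma.oct21} then yields $\diam(\eta)\lesssim\exc_D(\eta,\xi)$, and Proposition~\ref{julprop} finishes. The inequality you are after, $\diam(g(\eta))\gtrsim\diam(M_0(\eta))$, is in fact true, but the honest proof of it is exactly this excursion-measure monotonicity transported by $g$, not the boundary-derivative bound.
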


See Figure~\ref{excFig} for a diagram of the setup of this lemma.
\begin{pf*}{Proof of Lemma~\ref{FullExcursion}} By conformal invariance, we may assume that
$D = \Half$,
$z_1 = 0, z_2 = \infty$, and it suffices to prove the result when
$\exc_{D}(\eta,\xi) \le1$ in which case
%
\begin{figure}

\includegraphics{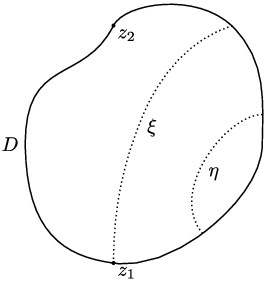}

\caption{The setup for Lemma \protect\ref{FullExcursion}.}
\label{excFig}
\end{figure}
the endpoints of $\eta$ are nonzero. Without loss of
generality we assume that they lie on the negative real axis, and by
scale\vadjust{\goodbreak} invariance we may assume
$\eta(1^-) \leq\eta(0^+) = -1$. Then
monotonicity of the excursion measure implies that
\[
\exc_{D}(\eta,\xi) \geq\exc_D(\eta).
\]
Lemma~\ref{lemmaoct21} implies that if $\diam(\eta) < 1/2$, then
$\exc_D(\eta) \asymp\diam(\eta)$. Since \mbox{$\exc_D(\eta) \le1$} one can
see there is a $c_0$ so that $\diam(\eta) \le c_0$.
The result then follows from Proposition~\ref{julprop}.
\end{pf*}

Given the proof, the form of this lemma may seem odd as the curve $\xi$
is discarded half way through; indeed, the result could be stated with
$\exc_D(\eta)$ rather than $\exc_D(\eta,\xi)$ in the inequality.
However, $\exc_D(\eta)$ is hard to estimate directly and, in every case
in this paper, the method of estimation is to find a curve $\xi$ and
proceed as above.

\subsection{Topological lemmas}

The most challenging portion of this proof is gaining simultaneous
control of
the distances to the near and far edges of the curve. Luckily, we may
eliminate a number of hard cases of the computations that follow by
purely topological means. For clarity of presentation, we have isolated
these topological lemmas here in a separate section.
Let $z,w,\imax$ be as described in the paragraph around equation (\ref
{iCond}). We call $\gamma$ a noncrossing curve (from $0$
to $\infty$ in~$\Half$) if is generated by the Loewner equation
(\ref{chordaleq}) with some
driving function $U_t$, and, as before, we let $H_t$ be the unbounded
component of
$\Half\setminus\gamma(0,t]$ and $\partial_1 H_t, \partial_2 H_t$ be
the preimages (considered
as prime ends) under
$g_t$ of $(-\infty,U_t)$ and $(U_t, \infty)$. We call a simple curve
$\omega\dvtx(0,\infty)
\rightarrow H_t$ with $\omega(0^+) = \gamma(t)$ and $\omega(\infty) =
\infty$ an
\textit{infinite crosscut of $H_t$}. Such curves can be obtained as
preimages under
$g_t$ of simple curves from $U_t$ to $\infty$ in $\Half$.
An important observation is that infinite crosscuts of $H_t$
separate $\partial_1 H_t$ from $\partial_2 H_t$.

We now define a particular crosscut of $H_t$ contained in $\imax$ that
separates $z$ from~$w$.
\begin{definition*}
Let $\gamma$ be a noncrossing curve, and let
$\imax_t = \imax\setminus\gamma(0,t]$. We denote by $I_t =
I_t(\imax
,z,w,\gamma)$
the unique open interval contained in $\imax$ such that the following
four properties hold. For any $t \le t'$ we have:
\begin{itemize}
\item$I_t$ is a connected component of $\imax_t$,
\item$I_{t'} \subseteq I_t$,
\item$H_t \setminus I_t$ has exactly two connected components, one
containing $z$ and one containing $w$ and
\item$I_t = I_{t'}$ whenever $\gamma(t,t'] \cap\imax= \varnothing$.
\end{itemize}
We let $H_t^z,H_t^w$ denote the components of $H_t \setminus I_t$ that
contain $z$ and $w$, respectively.
\end{definition*}

Seeing that this notion is well defined is nontrivial, despite the
intuitive nature what it should be (see Figure~\ref{itfig}). To avoid
%
\begin{figure}

\includegraphics{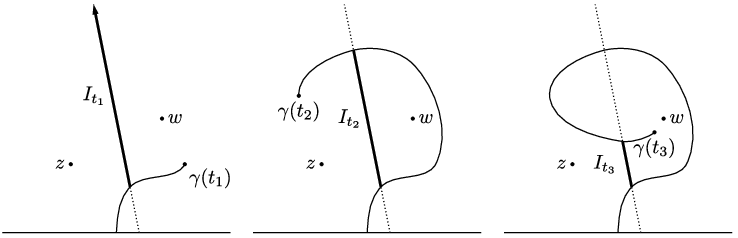}

\caption{A few steps showing the behavior of $I_t$ for some times $0 <
t_1 < t_2 < t_3$.}
\label{itfig}
\end{figure}
breaking the flow of the document, the proof that it is well defined
has been deferred to Appendix~\ref{ItAppendix}.

\begin{lemma}\label{boundedLemma}
Suppose $\gamma$
is a noncrossing curve with $z,w \notin\gamma(0,\infty) $ and $I_t
= I_t(\imax,z,w,\gamma)$
as above. Suppose $\gamma(t) \in\overline{I}_{t}$. If $I_t$ is not
bounded, then
\[
\Delta^*_{H_t}(z,\gamma(t),\infty) \ge1 ,\qquad \Delta^*_{H_t}(w,\gamma
(t),\infty) \ge1.
\]
\end{lemma}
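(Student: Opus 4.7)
The plan is to use $J := I_t \cup \{\gamma(t)\}$ as a half-infinite crosscut of the simply connected domain $H_t$ joining the two prime ends $\gamma(t)$ and $\infty$, and to observe that this crosscut separates $\partial_1 H_t$ from $\partial_2 H_t$ in $\partial H_t$. Once this is in place, the $\Delta^*$ bound reduces to the standing hypothesis $\dist(z,\imax),\dist(w,\imax) \geq 1$.

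First I would pin down the shape of $I_t$. Since $\imax$ is a ray and $I_t$ is a connected component of $\imax\setminus \gamma(0,t]$ that is unbounded, it is an open subinterval of $\imax$ with a single finite endpoint (lying in $\gamma(0,t] \cup (\imax \cap \R)$) and the opposite end running to $\infty$ along $\imax$. The hypothesis $\gamma(t) \in \overline{I_t}$ pins that finite endpoint down to be $\gamma(t)$, so $J$ is a simple arc in $\overline{H_t}$ with interior $I_t \subset H_t$ and endpoints $\gamma(t)$ and $\infty$; and by property 3 of the definition, $H_t \setminus J = H_t^z \sqcup H_t^w$. Since the two endpoints of $J$ are precisely the prime ends separating $\partial_1 H_t$ from $\partial_2 H_t$, a standard Jordan/prime-end argument in the simply connected domain $H_t$ gives, without loss of generality, $\partial_1 H_t \subseteq \overline{H_t^z}$ and $\partial_2 H_t \subseteq \overline{H_t^w}$, and (using the convention that the $\partial_j H_t$ exclude $\gamma(t),\infty$) the two arcs are disjoint from the opposite closures.

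From here the estimate is immediate: any curve in $H_t$ from $z$ to $\partial_2 H_t$ must exit $H_t^z$ and therefore cross $J \subseteq \imax$, so its diameter is at least $\dist(z,\imax) \geq 1$. This gives $\Delta_{H_t,2}(z) \geq 1$ and hence $\Delta^*_{H_t}(z;\gamma(t),\infty) \geq 1$; the same argument with $w$ in place of $z$ and the indices $1,2$ swapped yields the second inequality. The main obstacle is verifying the two underlying topological facts, namely that $J$ meets $\partial H_t$ only at its two endpoints (so it is a genuine crosscut) and that the decomposition of $\partial H_t \setminus \{\gamma(t),\infty\}$ induced by $J$ really matches the partition into $\partial_1 H_t, \partial_2 H_t$. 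The first follows from $I_t$ being an open component of $\imax \setminus \gamma(0,t]$ inside $H_t$, together with the observation that if $\imax \cap \R$ were an interior point of $I_t$ it would contradict $I_t \subseteq H_t$; the second is the general fact that in a simply connected domain a crosscut joining two fixed prime ends induces the unique compatible partition of the remaining boundary. Once these are accepted, the bound is one line.
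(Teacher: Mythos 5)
Your proposal is correct and takes essentially the same route as the paper: both proofs hinge on observing that when $I_t$ is unbounded, $I_t$ together with $\gamma(t)$ forms an infinite crosscut of $H_t$ separating $\partial_1 H_t$ from $\partial_2 H_t$, so any curve from $z$ (or $w$) reaching the far boundary arc must meet $\imax$, which forces the bound via $\dist(z,\imax),\dist(w,\imax)\ge 1$. The paper phrases this as a contradiction (assuming $\Delta^*<1$ gives a crosscut near $z$ that must hit $I_t$), while you argue directly that one of $\Delta_{H_t,1},\Delta_{H_t,2}$ is $\ge 1$, but the underlying topological content and the key inequality are identical.
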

\begin{pf} Suppose $I_t$ is not bounded. Then $I_t$ is an infinite
crosscut of $H_t$.
Suppose that $\Delta^*_{H_t}(z,\gamma(t),\infty) < 1$. Then there is a
crosscut $\eta$ contained in a disc of radius strictly less than one
centered on $z$ which has one end point in $\partial_1H_t$ and one end
point in $\partial_2H_t$.
Hence $\eta$ must intersect $I_t$. However, $\dist(z,I_t) \geq\dist
(z,\imax) \geq1$ which is a contradiction.
Therefore, $\Delta^*_{H_t}(z,\gamma(t),\infty) \ge1$.
\end{pf}
\begin{lemma}\label{oneSidedInfinite}
Suppose $\gamma$
is a noncrossing curve with $z,w \notin\gamma(0,\infty) $ and $I_t
= I_t(\imax,z,w,\gamma)$
as above. Suppose $\gamma(t) \in\overline{I}_{t}$. If $I_t$ is
bounded, and $H_t^z$ is bounded, then
\[
\Delta^*_{H_t}(z,\gamma(t),\infty) \ge1.
\]
\end{lemma}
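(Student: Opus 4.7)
The plan is to show that under the stated hypotheses one of the two arcs $\bd_1 H_t$ or $\bd_2 H_t$ is inaccessible from $z$ by any path in $H_t$ that stays in $B_1(z)$. The assumption $\dist(z,\imax) \geq 1$ will rule out crossing $I_t$, while the boundedness of $H_t^z$ will force such a path to remain on ``one side'' of the tip.

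First I would note that since $I_t \subseteq \imax$ and $\dist(z,\imax) \geq 1$, we have $\dist(z,I_t) \geq 1$. Therefore any curve $\eta : [0,1) \rightarrow H_t$ with $\eta(0) = z$ and $\eta \subseteq B_1(z)$ is disjoint from $I_t$, and by connectedness must remain in $H_t^z$.

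The main step is to pin down which prime ends of $H_t$ such a curve can approach, and here I would transfer the problem to $\Half$ via the conformal map $g_t : H_t \rightarrow \Half$. Since $\gamma(t) \in \overline{I_t} \setminus I_t$ is necessarily an endpoint of the open interval $I_t$, the image $g_t(I_t)$ is a crosscut in $\Half$ with one endpoint at $U_t$ (coming from the tip) and the other at some real point $b^* \neq U_t$. The image $g_t(H_t^z)$ is one of the two components of $\Half \setminus g_t(I_t)$; since $H_t^z$ is bounded in $\C$ while $g_t^{-1}$ sends neighborhoods of $\infty$ in $\Half$ to neighborhoods of $\infty$ in $H_t$, the component $g_t(H_t^z)$ cannot contain a neighborhood of $\infty$, so it must be the bounded component. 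The bounded component of $\Half \setminus g_t(I_t)$ meets $\R$ in the single interval strictly between $U_t$ and $b^*$, which lies entirely in either $(-\infty,U_t)$ or $(U_t,\infty)$. Let $j \in \{1,2\}$ be the label of the arc whose image $g_t(\bd_j H_t)$ is disjoint from $\bd g_t(H_t^z) \cap \R$.

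Finally, a curve $\eta$ from $z$ in $H_t$ contained in $B_1(z)$ lies in $H_t^z$, so its image under $g_t$ stays in $g_t(H_t^z)$; hence $\eta(1^-)$ can only approach a prime end on $g_t(I_t)$ or on the real boundary of $g_t(H_t^z)$, and in particular $\eta(1^-) \notin \bd_j H_t$. This yields $\Delta_{H_t,j}(z;\gamma(t),\infty) \geq 1$, and therefore $\Delta^*_{H_t}(z;\gamma(t),\infty) \geq 1$. The main subtlety is that a single point on $\gamma(0,t]$ may correspond to two distinct prime ends of $H_t$, one in $\bd_1 H_t$ and one in $\bd_2 H_t$, which makes direct reasoning about reachability in $\C$ ambiguous; passing to the image under $g_t$, where prime ends of $H_t$ correspond to distinct points of $\R$, is designed precisely to sidestep this.
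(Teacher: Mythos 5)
Your argument is correct and closely parallels the paper's, but it is packaged differently: the paper argues by contradiction entirely inside $H_t$, constructing an auxiliary infinite crosscut $\omega$ from $\gamma(t)$ to $\infty$ that avoids $H_t^z$ and then invoking the fact that infinite crosscuts separate $\partial_1 H_t$ from $\partial_2 H_t$, whereas you conjugate by $g_t$ and argue directly that $g_t(H_t^z)$ is the bounded component of $\Half \setminus g_t(I_t)$, whose real trace lies entirely on one side of $U_t$. The two are really the same separation argument viewed from opposite ends of the conformal map; your presentation avoids building $\omega$ at the cost of making explicit the prime-end bookkeeping in $\Half$. One point worth flagging: both proofs silently use that the crosscut $g_t(I_t)$ has $U_t$ as one of its two real endpoints --- equivalently, that $I_t$ reaches $\gamma(t)$ via the tip prime end rather than via some other prime end located at the same planar point. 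You at least state this explicitly; the paper leaves it implicit in the phrase ``continue the curve along $I_t$ to reach $\gamma(t)$.'' This is the genuine subtlety here and you were right to single out prime ends as the reason for passing to $\Half$, though a fully rigorous treatment would still need a line justifying why the endpoint of $g_t(I_t)$ is $U_t$ and not some $p \in (a,b)$ containing $U_t$ in its interior.
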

\begin{pf}
Suppose $I_t$ is bounded, $H_t^z$ is bounded, and
$\Delta^*_{H_t}(z,\gamma(t),\infty) < 1$. Then there is a crosscut
$\eta$ of $H_t^z$ which has one end point in $\partial_1H_t$ and one
end point in $\partial_2H_t$. Since $H_t^z$ is bounded and $\gamma(t)
\in\overline{I}_t$, we may find an infinite crosscut $\omega$
of $H_t$
that never enters $H_t^z$ [take a simple curve from $\infty$ in $H_t$
until it first hits $I_t$ and then continue
the curve along $I_t$ to reach $\gamma(t)$]. Since $\eta$ and $\omega$
do not intersect,
we get a contradiction.\vspace*{-2pt}
\end{pf}

Given these simple observations, we can restrict the manner in which
the various distances to the curve can be decreased.\vspace*{-2pt}
\begin{lemma}
Suppose $\gamma$
is a noncrossing curve with $z,w \notin\gamma(0,\infty) $ and $I_t
= I_t(\imax,z,w,\gamma)$
as above. Suppose $t_0$ is a time so that $\gamma(t_0) \in\overline
{I}_{t_0}$. Let $\zeta= \inf\{t > t_0
| \gamma(t) \in I_{t^-}\}$. Then at most one of the following holds:
\begin{itemize}
\item$\Delta_{H_{\zeta},1}(z,\gamma(\zeta),\infty) < \Delta
_{H_{t_0},1}(z,\gamma(t_0),\infty) \wedge1$, or
\item$\Delta_{H_{\zeta},2}(z,\gamma(\zeta),\infty) < \Delta
_{H_{t_0},2}(z,\gamma(t_0),\infty) \wedge1$.\vspace*{-2pt}
\end{itemize}
\end{lemma}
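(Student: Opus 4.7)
The plan is to split into two cases according to which component of $H_{t_0} \setminus I_{t_0}$ the arc $\gamma(t_0, \zeta)$ enters. Since $\gamma$ is non-crossing and $\gamma(t_0, \zeta)$ does not meet $I_{t_0}$ by the definition of $\zeta$, the arc lies entirely in either $H_{t_0}^z$ or $H_{t_0}^w$. A second recurring observation I will use is that any path starting at $z$ and contained in a disk of radius strictly less than $1$ about $z$ never meets $\imax$, because $\dist(z, \imax) \geq 1$.

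In the case $\gamma(t_0, \zeta) \subseteq H_{t_0}^w$, one can in fact show that \emph{neither} condition holds. The region $H_{t_0}^z$ is undisturbed between times $t_0$ and $\zeta$ and satisfies $H_{t_0}^z \subseteq H_\zeta$. A realizing path $\eta$ for the strict inequality $\Delta_{H_\zeta, j}(z, \gamma(\zeta), \infty) < 1$ is contained in a disk of radius less than $1$ about $z$, and so stays in $H_{t_0}^z \subseteq H_{t_0}$. Its endpoint on the slit must lie on $\gamma(0, t_0]$, since $\gamma(t_0, \zeta]$ lies at distance at least $\dist(z, \imax) \geq 1$ from $z$. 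By the continuity of $g_t$ and $U_t$ on $[t_0, \zeta]$, the labels $\partial_1, \partial_2$ on any fixed slit segment cannot swap, so the side of $\gamma(0, t_0]$ on which $\eta$ terminates is labeled $\partial_j$ at both times, and $\eta$ is also a valid path for $\Delta_{H_{t_0}, j}$. This yields $\Delta_{H_{t_0}, j}(z, \gamma(t_0), \infty) \leq \Delta_{H_\zeta, j}(z, \gamma(\zeta), \infty)$, contradicting the hypothesis.

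In the substantive case $\gamma(t_0, \zeta) \subseteq H_{t_0}^z$, the arc $\gamma(t_0, \zeta]$ is a crosscut of the simply connected region $H_{t_0}^z$ with endpoints on $\overline{I_{t_0}}$, and together with the subarc of $I_{t_0}$ joining these endpoints it bounds a Jordan region $L_z \subseteq \overline{H_{t_0}^z}$. The two sides of $\gamma(t_0, \zeta]$ are the ``inside'' (facing $L_z$) and the ``outside''; by the same continuity of labeling these extend to portions of $\partial_1 H_\zeta$ and $\partial_2 H_\zeta$ in some order. Suppose $z \in L_z$. Then any path from $z$ reaching the outside side of the crosscut, or any associated portion of the corresponding $\partial_j H_\zeta$ on $\gamma(0, t_0]$ or $\R$, must first exit $L_z$. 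The exit can only occur by approaching the $I_{t_0}$-arc portion of $\partial L_z$, or one of the endpoints $\gamma(t_0), \gamma(\zeta)$, all of which lie on $\imax$, so the path covers distance at least $\dist(z, \imax) \geq 1$. Hence $\Delta_{H_\zeta, j}(z, \gamma(\zeta), \infty) \geq 1$ for the $j$ corresponding to the outside side, making the corresponding condition (which requires this quantity to be strictly less than $1$) fail. The case $z \in H_{t_0}^z \setminus \overline{L_z}$ is handled symmetrically, with the opposite condition failing instead, and in either sub-case at most one condition can hold. The main obstacle is making the continuity-of-labeling step precise: one must check both that the two sides of the crosscut $\gamma(t_0, \zeta]$ extend into two \emph{distinct} $\partial_j H_\zeta$ arcs, and that a fixed side of $\gamma(0, t_0]$ receives the same label at times $t_0$ and $\zeta$. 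Both facts follow from the continuity of $g_t$ and $U_t$ on $[t_0, \zeta]$ but must be invoked explicitly; once they are in hand, the remainder of the argument is the topological observation about loops bounded by a crosscut and a boundary arc, combined with the metric constraint $\dist(z, \imax) \geq 1$.
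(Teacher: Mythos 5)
Your argument follows essentially the same approach as the paper: both consider the closed region cut off by $\gamma(t_0,\zeta]$ together with a segment of $\imax$ joining the endpoints, and both use the constraint $\dist(z,\imax)\geq 1$ to conclude that a realizing path of diameter less than $2$ from $z$ can reach only one side of that arc. Your version is somewhat more explicit about the case split and the preservation of the $\partial_1/\partial_2$ labeling under the Loewner flow, but it is the same underlying topological idea as the paper's ``non-crossing loop'' $\ell=\gamma[t_0,\zeta]\cup L$ argument.
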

\begin{pf}
If $\zeta= t_0$, the above statement follows immediately, so we may
assume $\zeta> t_0$.
Consider the noncrossing loop $\ell= \gamma[t_0,\zeta] \cup L$ where
$L$ is the line connecting
$\gamma(\zeta)$ and $\gamma(t_0)$.
Partition $\Half$ into two sets, the infinite component of $\Half
\setminus\ell$, which we will denote by $A_\infty$, and the union of
the finite components of $\Half\setminus\ell$, which we will denote
by $A_0$. The point $z$ is either in $A_\infty$ or $A_0$. As the cases
are similar, assume $z \in A_\infty$. Since $\ell$ is a noncrossing
loop, we either have a curve $\eta\dvtx[0,1) \rightarrow A_\infty$ with
$\eta(0) = z$ and $\eta(1^-) \in\partial_1 H_\zeta$ or $\eta(1^-)
\in
\partial_2 H_\zeta$, but not both. This yields that only one of the
$\Delta_{H_{\zeta},j}(z,\gamma(\zeta),\infty)$ could have decreased
past the minimum of $1$ and its previous value.\vspace*{-2pt}
\end{pf}
\begin{lemma}\label{boundLemma}
Suppose $\gamma$
is a noncrossing curve with $z,w \notin\gamma(0,\infty) $ and $I_t
= I_t(\imax,z,w,\gamma)$
as above. Suppose $t_0$ is a time so that $\gamma(t_0) \in\overline
{I}_{t_0}$,
and let $\zeta= \inf\{t > t_0 | \gamma(t) \in I_{t^-}\}$. Suppose for
some $s < 1$,
\[
\Delta^*_{\zeta}(z) \le s < \Delta^*_{t_0}(z) .
\]
Then $\Delta_{t_0}(z) \le s$, and $H_{t_0}^w$ and $H_{\zeta}^w$
are bounded.\vspace*{-2pt}
\end{lemma}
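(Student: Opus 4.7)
The plan is to prove the three assertions of the lemma in turn, using the preceding lemma together with Lemmas \ref{boundedLemma} and \ref{oneSidedInfinite}.

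For the first assertion $\Delta_{t_0}(z) \le s$, I would invoke the immediately preceding lemma, according to which at most one of the two strict inequalities
\[
\Delta_{H_\zeta,j}(z,\gamma(\zeta),\infty) < \Delta_{H_{t_0},j}(z,\gamma(t_0),\infty) \wedge 1 \quad (j = 1,2)
\]
can hold. Consequently some $j^* \in \{1,2\}$ satisfies $\Delta_{H_\zeta,j^*} \ge \Delta_{H_{t_0},j^*} \wedge 1$; combined with $\Delta_{H_\zeta,j^*} \le \Delta^*_\zeta(z) \le s < 1$ this forces $\Delta_{H_{t_0},j^*} \le s$, and taking the minimum over $j$ yields $\Delta_{t_0}(z) \le s$.

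For boundedness of $H_\zeta^w$, I would note that $\gamma(\zeta) \in \overline{I_\zeta}$ (being an endpoint of the open arc $I_\zeta$) and apply Lemma \ref{boundedLemma} with $\Delta^*_\zeta(z) \le s < 1$ to force $I_\zeta$ bounded. Lemma \ref{oneSidedInfinite} then rules out $H_\zeta^z$ from also being bounded, as that would give $\Delta^*_\zeta(z) \ge 1$. Since $I_\zeta$ is a bounded crosscut of the unbounded domain $H_\zeta$, exactly one component of $H_\zeta \setminus I_\zeta$ is unbounded, and that must be $H_\zeta^z$; thus $H_\zeta^w$ is bounded.

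The boundedness of $H_{t_0}^w$ is the main obstacle, and I would handle it by splitting into two cases based on the side of $\imax$ on which the open curve $\gamma(t_0,\zeta)$ lies; this is well-defined since $\gamma(t_0,\zeta)$ does not meet $\imax$ by the definition of $\zeta$ and is connected. In the first case $\gamma(t_0,\zeta)$ lies on the $z$-side of $\imax$, and a topological comparison shows $H_{t_0}^w$ is disjoint from both $\gamma(t_0,\zeta]$ and $I_\zeta \subseteq I_{t_0}$, so $H_{t_0}^w \subseteq H_\zeta \setminus I_\zeta$; by connectedness $H_{t_0}^w \subseteq H_\zeta^w$, which is bounded. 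The other case, where $\gamma(t_0,\zeta)$ lies on the $w$-side, I plan to exclude using the separation hypothesis $\dist(z,\imax) \ge 1$: then every point of $\gamma[t_0,\zeta]$ has distance at least $1$ from $z$, so inside the ball of radius $1$ about $z$ the domains $H_{t_0}$ and $H_\zeta$ coincide and carry the same boundary arcs $\partial_j$. Hence the quantities $\Delta_{H_\zeta,j}(z,\gamma(\zeta),\infty)$ and $\Delta_{H_{t_0},j}(z,\gamma(t_0),\infty)$ agree whenever either is less than $1$, and from $\Delta_{H_\zeta,j} \le s < 1$ for both $j$ we conclude $\Delta^*_{t_0}(z) \le s$, contradicting $s < \Delta^*_{t_0}(z)$.
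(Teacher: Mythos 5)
Your proposal is correct and follows the same overall strategy as the paper: the preceding lemma gives $\Delta_{t_0}(z) \le s$, Lemmas \ref{boundedLemma} and \ref{oneSidedInfinite} force $H_\zeta^w$ bounded, and a side-of-$\imax$ argument handles $H_{t_0}^w$. The notable difference is in the justification for the key step $\gamma(t_0,\zeta) \subseteq H_{t_0}^z$. The paper disposes of this by writing ``$\Delta^*_\zeta(z) < 1 \le \Delta^*_{t_0}(z)$ by assumption,'' but $\Delta^*_{t_0}(z) \ge 1$ is not literally among the hypotheses (only $\Delta^*_{t_0}(z) > s$ with $s < 1$). Your Case-2 exclusion supplies the missing reasoning directly: if $\gamma(t_0,\zeta)$ stayed on the $w$-side of $\imax$, the curve never enters $B_1(z)$, so the prime-end structure of $H_{t_0}$ and $H_\zeta$ near $z$ is identical and the labeling of $\partial_1,\partial_2$ is preserved, giving $\Delta^*_{t_0}(z)\wedge 1 = \Delta^*_\zeta(z)\wedge 1 \le s$, which contradicts $\Delta^*_{t_0}(z) > s$. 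This is a cleaner argument than the paper's phrasing suggests. You then finish with a direct containment $H_{t_0}^w \subseteq H_\zeta^w$, where the paper argues by contradiction via a hypothetical curve from $w$ to $\infty$ crossing $\partial H_\zeta^w$; both are equivalent, with yours being slightly more streamlined. No gaps.
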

\begin{pf}
By the previous lemma, we have that either $\Delta^1_{\zeta}(z) \ge
\Delta^1_{t_0}(z) \wedge1$ or $\Delta^2_{\zeta}(z) \ge\Delta
^2_{t_0}(z) \wedge1$. This implies that $\Delta^*_{\zeta}(z) \ge
\Delta
_{t_0}(z) \wedge1$,
and hence
$\Delta_{t_0}(z) \wedge1 \le s$ which is the first assertion.

We now prove that $H_\zeta^w$ is bounded. Assume first that both
$H_\zeta^w$ and $H_\zeta^z$ are unbounded. Then $I_\zeta$ is unbounded,
and by Lemma~\ref{boundedLemma} we have that
\[
\Delta^*_{H_\zeta}(z,\gamma(t),\infty) \ge1,
\]
which is a contradiction. Thus one of $H_\zeta^w$ or $H_\zeta^z$ is
bounded. If $H_\zeta^z$ is bounded, then by Lemma \ref
{oneSidedInfinite} we have
\[
\Delta^*_{H_\zeta}(z,\gamma(t),\infty) \ge1,
\]
which is again a contradiction. Thus $H_\zeta^w$ is bounded, as desired.\vadjust{\goodbreak}

By the definition of $\zeta$ and $I_t$, we know $\gamma(t_0,\zeta)$ is
contained in precisely one of $H_{t_0}^z$ or $H_{t_0}^w$. Since
\[
\Delta^*_{\zeta}(z) < 1 \le\Delta^*_{t_0}(z)
\]
by assumption, we know $\gamma(t_0,\zeta)\subseteq H_{t_0}^z$. Assume
that $H_{t_0}^w$ were unbounded. Then there is a curve $\eta$ from $w$
to $\infty$ contained in $H_{t_0}^w$. Since $H_\zeta^w$ is bounded
$\eta\cap\bd H_\zeta^w$ is nonempty. By definition,
\[
\bd H_\zeta^w \subseteq\gamma(0,t_0] \cup\gamma(t_0,\zeta] \cup
I_\zeta.
\]
We now show $\eta$ cannot intersect any of the three sets on the right.
Since $\eta$ is in $H_{t_0}^w$, we know $\eta\cap(\gamma(0,t_0]
\cup
I_{t_0}) = \varnothing$ and moreover, since $I_\zeta\subseteq
I_{t_0}$, that $\eta\cap I_\zeta= \varnothing$. Since $\gamma
(t_0,\zeta)\subseteq H_{t_0}^z$, we know $\eta\cap\gamma(t_0,\zeta
) =
\varnothing$. Thus we have a contradiction, and $H_{t_0}^w$ must be
bounded, as desired.
\end{pf}

\subsection{Main $\mbox{SLE}$ estimates}

We now use the above topological restrictions to help us establish the
needed $\mbox{SLE}$ estimates. Let $T_z$ (resp., $T_w$) denote the first time
that $z$ (resp., $w$) is not in $H_t$, and let $T = T_z \wedge T_w$
denote the first time that one of $z,w$ is not in $H_t$. Note that if
the curve is to approach $z$ and $w$ to within $\epsilon$ and $\delta$
as desired, it must do so before $T_z \vee T_w$.

We also define the following recursive set of stopping times. Let $\tau
_0 = 0$. Given $\tau_j< T$, define $\hat\tau_j$ as the infimum over
times $t>\tau_j$ such that
\[
\Delta_t(z) \le\tfrac{1}{2}\Delta_{\tau_j}(z)
\quad\mbox{or}\quad \Delta
_t(w) \le
\tfrac{1}{2}\Delta_{\tau_j}(w).
\]
Given this, let $\tau_{j+1}$ be the infimum over times $t>\hat\tau_{j}$
such that $\gamma(t) \in\overline{I}_{\hat\tau_j}$. These times are
understood to be infinite when past $T$, and hence at least one of the
points can no longer be approached by the curve. The sequence of
stopping times $\{\tau_k\}_{k \ge0}$ are called renewal times. We let
$R_{k+1} = 0$ if $\tau_{k+1} < \infty$ and $\Delta_{\tau_{k+1}}(z)
\leq
\frac12 \Delta_{\tau_k}(z)$; in this case, we can see that $\Delta
_{\tau_{k+1}}(w) > \frac12 \Delta_{\tau_k}(w)$. If $\tau_{k+1} <
\infty
$ and $\Delta_{\tau_{k+1}}(w) \leq\frac12 \Delta_{\tau_k}(w)$, we set
$R_{k+1} = 1$. We set $R_{k+1} = \infty$ if $\tau_{k+1} = \infty$. Less
formally, the renewal times encode when our curve halved its distance
to either $z$ or $w$ and then returned to $I_t$, while $R_k$ specifies
which point we halved the distance to. Let $\F_k = \F_{\tau_k}$.
\begin{lemma}\label{MainEstimate1}
There exist $c<\infty,\alpha>0$ such that for all $k \geq0$,
$r\leq1/2$,
\[
\Prob\{
R_{k+1} = 0; \Delta_{\tau_{k+1}}(z) \leq r
\Delta_{\tau_k}(z) \mid\F_k\}\leq
c 1\{\tau_k < T\}
\Delta_{\tau_k}(z)^\alpha
r^{2-d}.
\]
\end{lemma}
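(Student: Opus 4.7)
The plan is to decompose the event into two sequential phases---the curve approaching $z$, then returning to the interface $\imax$---and to use the one-point Green's function estimate for the first and an excursion measure estimate for the second.

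Condition on $\F_k$ and work on $\{\tau_k<T\}$, off of which the claim is trivial. By the domain Markov property, the continuation is chordal $SLE_\kappa$ from $\gamma(\tau_k)$ to $\infty$ in $H_{\tau_k}$. Define
\[
\sigma = \inf\{t>\tau_k : \Delta_t(z)\le r\Delta_{\tau_k}(z)\}.
\]
Since $r\le 1/2$, the target event forces $\hat\tau_k\le\sigma\le\tau_{k+1}<\infty$: the curve must approach within distance $r\Delta_{\tau_k}(z)$ of $z$ and then hit $\overline{I_{\hat\tau_k}}$.

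For Phase 1, Lemma \ref{fastGreen} applied to $z$ in $H_{\tau_k}$ (converting inradius to conformal radius by Koebe's $1/4$-theorem and using the trivial bound $S\le 1$) yields
\[
\Prob(\sigma<\infty\mid\F_k)\le Cr^{2-d}
\]
on $\{\tau_k<T\}$.

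For Phase 2, I apply the strong Markov property at $\sigma$. The continuation is chordal $SLE_\kappa$ from $\gamma(\sigma)$ to $\infty$ in $H_\sigma$, and $\overline{I_{\hat\tau_k}}$ is $\F_\sigma$-measurable since $\hat\tau_k\le\sigma$. I will apply Lemma \ref{FullExcursion} with $\eta$ a crosscut of $H_\sigma$ adjacent to $\overline{I_{\hat\tau_k}}$ and $\xi$ a crosscut of $H_\sigma$ from $\gamma(\sigma)$ surrounding the ball $B(z,r\Delta_{\tau_k}(z))$, producing a bound of $c\,\exc_{H_\sigma}(\eta,\xi)^\beta$. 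To estimate the excursion measure I will invoke Lemma \ref{jul16.lemma1} with $H$ the half-plane bounded by $\imax$ containing $z$ (valid since $\dist(z,\imax)\ge 1>1/2$) and $\epsilon=r\Delta_{\tau_k}(z)$, then use monotonicity of excursion measure under domain inclusion to transport back into $H_\sigma$. This yields
\[
\Prob\!\left(\text{curve after }\sigma\text{ hits }\overline{I_{\hat\tau_k}}\mid\F_\sigma\right)\le c\,(r\Delta_{\tau_k}(z))^{\beta/2}.
\]
Chaining by the tower property gives
\[
\Prob(\text{event}\mid\F_k)\le Cr^{2-d}\cdot c\,(r\Delta_{\tau_k}(z))^{\beta/2}\le C'r^{2-d}\Delta_{\tau_k}(z)^{\beta/2},
\]
proving the claim with $\alpha=\beta/2$.

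The main obstacle is verifying the geometric hypotheses of Lemma \ref{FullExcursion} in the slit domain $H_\sigma$: one must produce a legitimate crosscut $\xi$ from $\gamma(\sigma)$ enclosing a neighborhood of $z$ and separating it from $\overline{I_{\hat\tau_k}}$. The topological control supplied by Lemma \ref{boundLemma}---which forces $H^w$ to be bounded once the curve has deeply penetrated the $z$-side---should furnish exactly the required separation structure, and matching the boundary arcs across the conformal map to apply Lemma \ref{jul16.lemma1} cleanly is the second delicate point.
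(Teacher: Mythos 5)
The two-phase decomposition you use (approach $z$, then return to $\imax$) is indeed the structure of the paper's argument, but your Phase 2 estimate is not correct, and the gap is exactly the hard point the paper is organized around.

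The crosscut $\xi$ you use in Lemma \ref{FullExcursion} must start at $\gamma(\sigma)$ and separate (a crosscut near) $\overline{I_{\hat\tau_k}}$ from $\infty$ inside $H_\sigma$. A crosscut from $\gamma(\sigma)$ ``surrounding $B(z,r\Delta_{\tau_k}(z))$'' encloses $z$ in a small region and leaves $I_{\hat\tau_k}$ (which is at distance $\ge 1$ from $z$) and $\infty$ on the \emph{same} side; it does not satisfy the separation hypothesis of Lemma \ref{FullExcursion}. To obtain a crosscut through $\gamma(\sigma)$ that actually disconnects $I_{\hat\tau_k}$ from $\infty$, one must pass through $z$ and continue out to the far boundary arc of $H_\sigma$, as the paper does with $L\cup\eta_1$ or $L\cup\eta_2$. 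The diameter of that crosscut is of order $\Delta^*_{\tau_k}(z)$, not $r\Delta_{\tau_k}(z)$, so the Lemma \ref{jul16.lemma1} estimate yields roughly $\Delta^*_{\tau_k}(z)^{1/2}$, not $(r\Delta_{\tau_k}(z))^{1/2}$.

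This is fatal for the single-case argument: $\Delta^*_{\tau_k}(z)$ can be of order $1$ even when $\Delta_{\tau_k}(z)$ is tiny (the curve has only approached $z$ from one side), so the corrected excursion bound gives no $\Delta_{\tau_k}(z)^\alpha$ decay at all. The paper recovers the decay by a case split on the relative size of $\Delta^*_{\tau_k}(z)$: if $\Delta^*_{\tau_k}(z)\ge\sqrt{\Delta_{\tau_k}(z)}$ (including the topological cases where $\Delta^*_{\tau_k}(z)\ge 1$), the one-point estimate Lemma \ref{july17.3} already carries a factor $[\inrad/\Delta^*]^{\beta/2}\le\Delta_{\tau_k}(z)^{\beta/4}$ — precisely the factor you threw away by bounding $S\le 1$ in Phase 1; if $\Delta^*_{\tau_k}(z)\le\sqrt{\Delta_{\tau_k}(z)}$, the excursion argument gives $\Delta^*_{\tau_k}(z)^{1/2}\le\Delta_{\tau_k}(z)^{1/4}$. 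Both branches give $\alpha=\beta/4$. Your proposal needs both the corrected geometry of the separating crosscut and this dichotomy to go through.
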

\begin{pf}
We assume $\tau_k < T$, and we write $\tau= \tau_k$, $\xi= \xi
(z;r\Delta_{\tau}(z))$. First, consider the event
that either
$I_\tau$ is not bounded, or both $I_\tau$ and $H_\tau^z$ are bounded.
By Lemmas~\ref{boundedLemma} and~\ref{oneSidedInfinite}, we have
$\Delta_\tau^*(z) \ge1$. Thus by Lemma~\ref{july173}, we get
\[
\Prob\{\xi< \infty\mid\F_k\} \leq c r^{2-d} \Delta_{\tau
}(z)^{\beta/2}.
\]
%

Suppose that $I_{\tau}$ is bounded,
and $H_{\tau}^w$ is bounded. We split into the following two cases:
$\Delta_\tau^*(z) \leq\sqrt{\Delta_\tau(z)}$ and $\Delta_{\tau
}^*(z) >
\sqrt{\Delta_\tau(z)}$. If $\Delta_{\tau}^*(z) > \sqrt{\Delta
_\tau
(z)}$, then
Lem\-ma~\ref{july173} implies
\[
\Prob\{\xi< \infty\mid\F_k\} \leq c r^{2-d} \Delta_{\tau
}(z)^{\beta/4}.
\]

Suppose $\Delta_{\tau}^*(z) \leq\sqrt{\Delta_\tau(z)}$. Then
there exist
simple curves $\eta_1,\eta_2\dvtx[0,1)
\rightarrow H_\tau^z$ contained in the disk of radius $2\Delta_\tau
^*(z)$ about $z$
with $\eta^j(0) = z$ and $\eta^j(1+) \in\partial_j H_\tau$.
At the time $\xi$ we can consider the line segment $L$ from $\gamma
(\xi
)$ to $z$. There exists a crosscut
of $H_\xi$,
$\hat\eta$,
contained in $L \cup\eta_1$ or in $L\cup\eta_2$, one of whose
endpoints is~$\gamma(\xi)$,
that disconnects $I_{\xi}$ from infinity. Using Lemma~\ref
{jul16lemma1}, we see
that
\[
\exc_{H_{\xi}}(\hat\eta,I_{\xi}) \leq c \Delta_\tau^*(z)^
{1/2} \leq c \Delta_\tau(z)^{1/4}.
\]
Thus, using Lemma~\ref{FullExcursion} we see that
\[
\Prob\{\xi< \tau_{k+1} < \infty\mid\F_k\}
\leq c \Delta_\tau(z)^{\beta/4}
\Prob\{\xi< \infty\mid\F_k \}
\leq c r^{2-d} \Delta_\tau(z)^{\beta/4} .
\]
\upqed\end{pf}
\begin{remark*}
The proof of the last lemma was not difficult given the estimates we have
derived. However, it is useful to summarize the basic idea. If $\Delta
_{\tau}^*(z)$
is not too small,
then it suffices to estimate
\[
\Prob\{
R_{k+1} = 0; \Delta_{\tau_{k+1}}(z) \leq r
\Delta_{\tau_k}(z) \mid\F_k\}
\]
by
\[
\Prob\{\xi< \infty\mid\F_k \}.
\]
However,
if $\Delta_{\tau_k}^*(z)$ is not much bigger than $\Delta_{\tau_k}(z)$
this estimate
is not sufficient. In this case, we need to use
\[
\Prob\{\xi< \infty\mid\F_k \}
\Prob\{\tau_{k+1} < \infty\mid\F_k, \xi< \infty\} .
\]
\end{remark*}

The above argument provides a good bound on the probability that the
near side gets even closer. To complete our argument, we must also
provide a bound limiting the probability that the far side can get
closer as well.
\begin{lemma}\label{MainEstimate2}
There exists $c<\infty$ such that for all $k \geq0$, $s \leq1/4$, if
\[
\xi^* = \inf\{ t > \tau_k | \Delta_t^*(z) \le s\} \quad\mbox{and}\quad
\eta^* = \inf\{ t > \xi^* | \gamma(t) \in I_{t^-} \},
\]
then
\[
\Prob\{\eta^* < \infty, \Delta_{\eta^*}^*(z) \le s | \Delta_{\tau
_k}^*(z) > s, \F_{\tau_k}\} \le c s^{\beta/2}.
\]
\end{lemma}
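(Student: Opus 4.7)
My plan is to apply the tower property at the stopping time $\xi^*$:
\begin{equation*}
\Prob\bigl\{\eta^* < \infty,\ \Delta^*_{\eta^*}(z) \le s \,\big|\, \F_{\tau_k}\bigr\} \le \E\!\left[\mathbf{1}\{\xi^* < \infty\}\, \Prob\bigl\{\eta^* < \infty \,\big|\, \F_{\xi^*}\bigr\}\,\Big|\,\F_{\tau_k}\right],
\end{equation*}
which reduces the problem to showing $\Prob\{\eta^* < \infty \mid \F_{\xi^*}\} \le c\, s^{\beta/2}$ on the event $\{\xi^* < \infty\}$; dropping the extra constraint $\Delta^*_{\eta^*}(z) \le s$ from the inner event only enlarges it.

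On $\{\xi^* < \infty\}$ we have $\Delta^*_{\xi^*}(z) \le s < 1$, and Lemmas \ref{boundedLemma} and \ref{oneSidedInfinite} force $I_{\xi^*}$ to be bounded and, since $H^z_{\xi^*}$ must be unbounded, $H^w_{\xi^*}$ to be bounded as well. Because $\Delta_{H_{\xi^*}, j}(z) \le s$ for $j = 1, 2$, I fix simple curves $\eta^1, \eta^2 \subset H_{\xi^*}$ from $z$ to $\partial_j H_{\xi^*}$, each contained in the disk of radius $2s$ about $z$. Mirroring the construction in the proof of Lemma \ref{MainEstimate1}, I take a curve $L \subset \overline{H_{\xi^*}}$ from $\gamma(\xi^*)$ to $z$ and, for the index $j \in \{1,2\}$ selected so as to be compatible with the topology of the decomposition $H_{\xi^*} = H^z_{\xi^*} \cup I_{\xi^*} \cup H^w_{\xi^*}$, extract a crosscut $\hat\eta \subset L \cup \eta^j$ of $H_{\xi^*}$, with one endpoint at $\gamma(\xi^*)$, that disconnects $I_{\xi^*}$ from $\infty$ in $H_{\xi^*}$.

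Since $I_{\eta^{*-}} \subseteq I_{\xi^*}$, the event $\{\eta^* < \infty\}$ forces the $SLE_\kappa$ continuation from $\gamma(\xi^*)$ to $\infty$ in $H_{\xi^*}$ to intersect the crosscut $I_{\xi^*}$. Lemma \ref{FullExcursion} applied with $\xi = \hat\eta$ and target crosscut $\eta = I_{\xi^*}$ gives
\begin{equation*}
\Prob\bigl\{\eta^* < \infty \mid \F_{\xi^*}\bigr\} \le c\, \exc_{H_{\xi^*}}(I_{\xi^*}, \hat\eta)^{\beta}.
\end{equation*}
Then Lemma \ref{jul16.lemma1}, applied after splitting $H_{\xi^*}$ along $\eta^j$ so that $z$ becomes a boundary point, and using the half-plane on the $z$-side of $\imax$ as the ambient half-plane with $\epsilon \asymp s$, yields $\exc_{H_{\xi^*}}(I_{\xi^*}, \hat\eta) \le c\, s^{1/2}$: the near-$z$ portion of $\hat\eta$ sits in $B(z, 2s)$ while $I_{\xi^*} \subset \imax$ lies at distance $\ge 1 - s$ from $z$. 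Combining gives $c\, s^{\beta/2}$.

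The main obstacle is the topological bookkeeping needed to produce $\hat\eta$ with the required separation property, together with handling the subtlety that the tip $\gamma(\xi^*)$ need not itself be close to $z$ in Euclidean distance (unlike in Lemma \ref{MainEstimate1}, where Koebe forces such closeness). Here $L$ may be long, but it can be chosen inside $\overline{H_{\xi^*}}$ hugging the preexisting boundary $\gamma(0, \xi^*] \cup \R$, so that the excursion-measure interaction between $\hat\eta$ and $I_{\xi^*}$ is dominated by the small-diameter portion $\eta^j$ near $z$, which is where the Lemma \ref{jul16.lemma1} input supplies the $s^{1/2}$ bound.
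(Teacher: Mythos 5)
Your reduction discards the constraint $\Delta^*_{\eta^*}(z) \le s$ on the grounds that dropping it "only enlarges" the event, but this is where the argument breaks: that constraint is precisely what makes the topological separation claim true, and without it the excursion bound you want to invoke need not apply. Concretely, you assert that Lemmas \ref{boundedLemma} and \ref{oneSidedInfinite} force $I_{\xi^*}$ bounded and $H^w_{\xi^*}$ bounded, but both lemmas require the hypothesis $\gamma(t) \in \overline{I_t}$, which fails at $t=\xi^*$ in general — $\xi^*$ is a time when a small crosscut near $z$ exists, not a time when the tip is on $\imax$. In fact, if $\gamma$ has not yet crossed $\imax$ after $\tau_k$ (take $\tau_k = 0$), then $I_{\xi^*}$ can be unbounded, hence both $H^z_{\xi^*}$ and $H^w_{\xi^*}$ unbounded, and the crosscut $\nu$ near $z$ does \emph{not} separate $I_{\xi^*}$ from $\infty$ (the image of $I_{\xi^*}$ under $g_{\xi^*}$ has an endpoint at $\infty$, on the $\infty$ side of the image of $\nu$). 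In that configuration Lemma \ref{FullExcursion} cannot be applied with $\eta = I_{\xi^*}$, and the $s^{\beta/2}$ bound does not follow.

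The paper's proof keeps $\Delta^*_{\eta^*}(z) \le s$ in the event and introduces $\varpi = \sup\{t < \eta^* : \gamma(t) \in I_{t^-}\}$. At $\varpi$ the hypothesis $\gamma(\varpi) \in \overline{I_\varpi}$ does hold, and since $\varpi < \xi^*$ one has $\Delta^*_\varpi(z) > s$, while $\Delta^*_{\eta^*}(z) \le s$ by assumption; this is exactly the setup of Lemma \ref{boundLemma} with $t_0 = \varpi$, $\zeta = \eta^*$, which yields that $H^w_\varpi$ (and hence $H^w_{\xi^*}$, since no crossing of $\imax$ occurs in $(\varpi,\eta^*)$) is bounded. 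That is what guarantees the crosscut near $z$ actually separates $I_{\xi^*}$ from $\infty$, after which the excursion-measure estimate goes through as you describe. Your proposal is missing this $\varpi$-based application of Lemma \ref{boundLemma}, and your simplification of the event removes the very hypothesis that makes it possible.
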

\begin{pf}
Assume $\Delta_{\tau_k}^*(z) > s$.
If
$\eta^* < \infty$ we may define
\[
\varpi= \sup\{ t < \eta^* | \gamma(t) \in I_{t^-}\}\vadjust{\goodbreak}
\]
to be the previous time that $\gamma$ crossed $I_{t^-}$ before $\eta
^*$. Note that $\tau_k \leq
\varpi< \xi^* < \eta^*$ and $\Delta_{\varpi}^*(z) > s$.
By considering the two times $\varpi$ and $\eta^*$ in Lemma~\ref
{boundLemma}, we see that $H_{\varpi}^w$ is bounded.

Consider the situation at time $\xi^*$. By the definition
of the stopping times, there must be a curve $\nu\dvtx (0,1) \rightarrow
H_{\xi^*}$ which contains $z$, is never more than distance
$2s$ from $z$, has $\nu(0^+) \in\partial_1H_{\xi^*}$ and $\nu(1^-)
\in\partial_2H_{\xi^*}$ such that $\nu$ separates $I_{\xi^*}$, and
hence~$w$, from infinity. Since $\nu$ is at least distance $1/2$ from
$I_{\xi^*}$ we know
from Lemma~\ref{jul16lemma1}
that the excursion measure between $\nu$ and $I_{\xi^*}$ in $H_{\xi
^*}$ is bounded above by
$C s^{1/2}$.
Then an application of Lemma~\ref{FullExcursion} tells us that the
probability of $\gamma$ returning to $I_{\xi^*}$ is bounded above by
$Cs^{\beta/2}$ which gives the lemma.
\end{pf}

The following two lemmas combine the methods of the above two bounds.

\begin{lemma}\label{MainEstimate3}
There exist $c<\infty,\alpha>0$ such that for all $k \geq0$,
$r\leq1/2$, $s \le1/4$,
\begin{eqnarray*}
&&\Prob\{
R_{\tau_{k+1}} = 0; \Delta_{\tau_{k+1}}(z) \leq r
\Delta_{\tau_k}(z) ; \Delta_{\tau_{k+1}}^*(w)
\leq s \mid\F_k\} \\
&&\qquad \le c 1\{\tau_k < T\}
\Delta_{\tau_k}(z)
^{\alpha} [s^\alpha+ 1\{\Delta_{\tau_k}^*(w)
\leq s \} ] r^{2-d}.
\end{eqnarray*}
\end{lemma}
\begin{pf}
If $\Delta_{\tau_k}^*(w) \le s$, then the desired statement reduces to
Lemma~\ref{MainEstimate1}. Thus, we may assume that $\Delta_{\tau
_k}^*(w) > s$.

Let $\zeta^* =
\zeta^*_k$ be the infimum over times $t > \tau_k$ so that $\Delta
^*_t(w) \le s$ and $\gamma(t) \in I_{t^-}$.
Let $\sigma= \sigma_k = \inf\{t > \tau_k | \Delta_t(z) \le r\Delta
_{\tau_k}(z)\}$. If
$\Delta_{\tau_k}^*(w) > s, \Delta_{\tau_{k+1}}^*(w)\leq s$, and
\mbox{$\sigma< \infty$}, then
$\zeta^* < \sigma$ since the curve $\gamma$ would need to intersect
$I_\sigma$ before approaching $w$ and hence would force the renewal
time $\tau_{k+1}$ before $\zeta_k$.

By the same argument as in Lemma~\ref{MainEstimate2}, we know if
$\Delta
_{\tau_k}^*(w) > s$
and $\zeta^* < \infty$, there is a time $\omega$,
$\tau_k \le\omega< \zeta^*$ for which there is a curve connecting
$\partial_1H_\omega$ to $\partial_2H_\omega$ passing through
$\gamma
(\omega)$ contained in a disk of radius $2s$ about $w$ separating
$I_{\kappa}$ from infinity. Then, by Lemma~\ref{FullExcursion}, we
have that
\[
\Prob\{\zeta^* < \infty| \Delta^*_{\tau_k}(z) > s, \F_{\tau_k}\}
\le
cs^\alpha.
\]

By Lemma~\ref{boundLemma} we know $H_{\zeta^*}^z$ is bounded.
Lemma~\ref{oneSidedInfinite} implies that
\mbox{$\Delta^*_{\zeta^*}(z) = 1$}, and hence by Lemma~\ref{jul16lemma1}
\[
\Prob\{R_{\tau_{k+1}} = 0; \Delta_{\tau_{k+1}} \le r \Delta_{\tau_k}(z)
| \F_{\zeta^*}, \zeta^* < \infty\} \le c1\{\tau_k < T\}\Delta
_{\zeta
^*}(z)^\alpha r^{2-d}.
\]
Combining the above two bounds gives the desired result.
\end{pf}
\begin{lemma}\label{MainEstimate4}
There exist $c<\infty,\alpha>0$ such that for all $k \geq0$,
$r\leq1/2$, $s > 0$,
\begin{eqnarray*}
&&\Prob\{
R_{\tau_{k+1}} = 0; \Delta_{\tau_{k+1}}(z) \leq r
\Delta_{\tau_k}(z) ; \Delta_{\tau_{k+1}}^*(z)
\leq s \mid\F_k\}\\
&&\qquad \le c 1\{\tau_k < T\}
\Delta_{\tau_k}(z)
^{\alpha} [s^\alpha+ 1\{\Delta_{\tau_k}^*(z)
\leq s \} ] r^{2-d}.
\end{eqnarray*}
\end{lemma}
\begin{pf}
If $\Delta_{\tau_k}^*(z) \le s$ or $s \geq1/4$,
the conclusion
reduces to Lemma~\ref{MainEstimate1}. Thus we may assume that $\Delta
_{\tau_k}^*(z) > s, s \leq1/4$. Let $E$ denote the event
\[
E = \{ R_{\tau_{k+1}} = 0; \Delta_{\tau_{k+1}}(z) \leq r
\Delta_{\tau_k}(z) ; \Delta_{\tau_{k+1}}^*(z)
\leq s ; \Delta_{\tau_{k}}^*(z)
> s \}.
\]

Let
\[
\sigma= \inf\{ t | \Delta_t(z) \le r\Delta_{\tau_k}(z)\},
\]
and note that on the event $E$,
\[
\tau_{k+1} = \inf\{t > \sigma| \gamma(t) \in I_{t^-}\}.
\]
Define $\xi$ to be the infimum over times $t\ge\sigma$ such that there
is a curve $\eta\dvtx (0,1) \rightarrow H_t$ with $\eta(0^+) = \gamma(t)$
and $\eta(1^-) \in\partial H_t$ with
$\eta$ contained entirely in the ball of radius $2s$ about $z$, and
$\eta$ separating $I_t$ from $\infty$.

We now claim that given $\F_\sigma$ either $\xi< \tau_{k+1}$ or
$\Delta^*_{\tau_{k+1}}(z) > s$. To see this, suppose neither holds.
Since $\Delta^*_{\tau_{k+1}}(z) \le s$, for every $s<s' \leq2s
\leq1/2$, there is a crosscut $\eta$ of $H_{\tau_{k+1}}$ going through
$z$ whose endpoints are in $ \partial_1 H_{\tau_{k+1}}, \partial_2
H_{\tau_{k+1}}$, respectively, and which is contained in the disk of
radius $s'$ about $z$. By Lemma~\ref{boundLemma} we
know $\eta$ must disconnect $I_{\tau_{k+1}}$ from $\infty$ since
$H_{\tau _{k+1}}^w$ must be bounded. We can choose such an $\eta$ such
that at least one endpoint of $\eta$ is not in $\gamma[0,\tau_{k}]$,
for otherwise all such $\eta$ would be a crosscuts of $H_{\tau_{k}}$
separating $w$ from infinity which would imply that
$\Delta_{\tau_k}^*(z) \leq s$.

Let $\zeta= \sup\{t\le\tau_{k+1} | \gamma(t) \in\overline{\eta}
\} >
\tau_k$ and note
that $\tau_k < \zeta< \tau_{k+1}$.
If $\zeta\ge\sigma$ we are done since this $\eta$ demonstrates that
$\xi< \tau_{k+1}$.

Thus assume $\zeta< \sigma$. In this case, we will construct a curve
in $H_\sigma$ satisfying the conditions in the definition of $\xi$.
Since $\zeta< \sigma$ we know the curve $\eta$ defined above
disconnects $I_\sigma$ from infinity in $H_\sigma$. By the definition
of $\sigma$ as the first time that $\Delta_\sigma(z) \le r\Delta
_{\tau
_k}(z)$, the straight open line segment, $L$, from $\gamma(\sigma)$ to
$z$ is contained~in~$H_\sigma$. Additionally, since $\Delta_\sigma(z)
\le\Delta^*_\sigma(z) \le s$, we know $\eta(0,1) \cup L$ is contained
entirely in the ball of radius $2s$ about $z$. Thus we may find a curve
$\hat\eta$ contained in $\eta(0,1) \cup L$ which separates $I_\sigma$
from infinity in $H_\sigma$ with $\eta(0^+) = \gamma(t)$ and $\eta(1^-)
\in\partial H_t$ and with $\eta$ contained entirely in the ball of
radius $2s$ about $z$, proving that $\xi= \sigma< \tau_{k+1}$. Thus
we have reached a contradiction.

On the event $E$
we know $\Delta^*_{\tau_{k+1}}(z) \le s$, and thus the above argument
tells us $\xi< \tau_{k+1}$.
We have therefore shown that if $\Delta_{\tau_k}^*(z) > s, s \leq
1/4$, then
\[
\Prob(E \mid\F_k) \leq\Prob\{\sigma\leq\xi< \tau_{k+1} <
\infty
\mid\F_k\}.
\]
We may now argue as in the second part of the proof of Lemma \ref
{MainEstimate1} to obtain
$\Prob\{\sigma< \infty\mid\F_k\} \leq c \Delta_{\tau
_k}(z)^\alpha$ and
$\Prob\{\tau_{k+1} < \infty\mid\F_\xi\} \leq c s^\alpha$.
%
\end{pf}

\subsection{Combinatorial estimates}

We have now completed the bulk of the probabilistic estimates. Most of
what remains is a combinatorial argument to sum up the bounds proven
above across all possible ways that the $\mbox{SLE}$ curve may approach $z$
and $w$ in turn.\vadjust{\goodbreak}

Without loss of generality, assume that $\delta= 2^{-m}$ and $\epsilon
= 2^{-n}$,
and let
\begin{eqnarray*}
\xi_z &=& \xi_{z,\epsilon} = \inf\{t\dvtx \Delta_t(z)
\leq2^{-n}\},\\
\xi_w &=& \xi_{w,\delta} = \inf\{t\dvtx \Delta_t(w) \leq2^{-m} \},
\\
\xi &=& \xi_z \vee\xi_w = \inf\{t\dvtx \Delta_t(z) \leq2^{-n},
\Delta_t(w) \leq2^{-m} \} .
\end{eqnarray*}
These are similar to $\chi$ and $\xi$ from the previous sections;
however now the times denote the first time that the curve gets within
a small Euclidean distance of the point, rather than a small conformal
radius of the point. Let $\sigma$ be the minimal $\tau_k$ such that
$\Delta_{\tau_k}(z) < 2^{-n+1}$ or $\Delta_{\tau_k}(w) < 2^{-m+1}$. Let
$k_\sigma$ be the index so that $\sigma= \tau_{k_\sigma}$. If such a
renewal time does not exist, let $k_\sigma= \infty$ and $\sigma=
\infty$. Note that if $\xi$ is finite, then so is $\sigma$.

Let $V_{z,k},V_z$ denote the events (and their indicator functions)
\[
V_{z,k} = \{ k_\sigma= k, R_\sigma=0\},\qquad V_z = \bigcup_{k=1}^\infty
V_{z,k}.
\]
We define $V_w$ analogously.
By the definition of $\sigma$, on the event
the event $V_z$,
\[
\Delta_{\tau_{k_\sigma-1}}(z) \geq2^{-n+1},\qquad
\Delta_{\tau_{k_\sigma-1}}(w) \geq2^{-m+1} ,\qquad
\Delta_\sigma(z) < 2^{-n+1} .
\]
Also,
\[
\Delta_\sigma(w) > 2^{-m} ,
\]
for if $\Delta_\sigma(w) \leq2^{-m}$, there would have been a
renewal time
after $\tau_{k_\sigma- 1}$ but before $\tau_k = \sigma$.
Note that
\[
\{\xi< \infty\} \subset[V_z \cap\{\xi_w < \infty\}]
\cup[V_w \cap\{\xi_z < \infty\} ].
\]
We will concentrate on the event $V_z \cap\{\xi_w < \infty\}$;
similar arguments handle the event $V_w \cap\{\xi_z < \infty\} $.

Define the integers $(i_l,j_l)$ by stating that at the
renewal time $\tau_l$,
\[
2^{-i_l} < \Delta_{\tau_l}(z)
\leq2^{-i_l + 1},\qquad
2^{-j_l} < \Delta_{\tau_l}(w) \leq
2^{-j_l+1} .
\]
If $\sigma< \infty$, we write $(i_\sigma,j_\sigma) =
(i_{k_{\sigma}},j_{k_\sigma})$.
On the event $k_\sigma= k, R_\sigma= 0$, there
is a finite sequence of ordered
triples
\begin{eqnarray}
\pi= [(i_0,j_0,0), (i_1,j_1,R_1), \ldots, (i_{k-1},j_{k-1},R_{k-1})
,(i_k,j_k,R_k) =(i_\sigma,j_\sigma,0)],
\nonumber\\
&&\eqntext{i_l,j_l \in\{1,2,3,\ldots\},
R_l \in\{0,1\}.}
\end{eqnarray}
We have the following properties for
$0 \leq l \leq k-1$:
\begin{itemize}
\item If $R_{l+1} = 0$, then
$i_{l+1} \geq i_l + 1$
and $j_l \leq j_{l+1} \leq j_l
+1$.
\item If $R_{l+1} = 1$, then
$i_l \leq i_{l+1} \leq i_l + 1$ and $j_{l+1} \geq j_l + 1$.
\end{itemize}
We call any sequence of triples satisfying these two properties a
\textit{legal}
sequence of length $k$. For any $i,j,k$, let $\seq_k(i,j,0)$ denote the
collection of legal finite sequences of length $k$ whose final triple is
\[
(i_k,j_k,R_k) = (i,j,0).\vadjust{\goodbreak}
\]
If $\pi$ is a legal finite sequence of length $k$, let $V_{z,\pi}$ be
the event that $k_\sigma= k$, \mbox{$R_\sigma= 0$} and the renewal times up to
and including $\sigma$ give the sequence $\pi$. Figure~\ref{combEx}
illustrates this definition.

%
\begin{figure}

\includegraphics{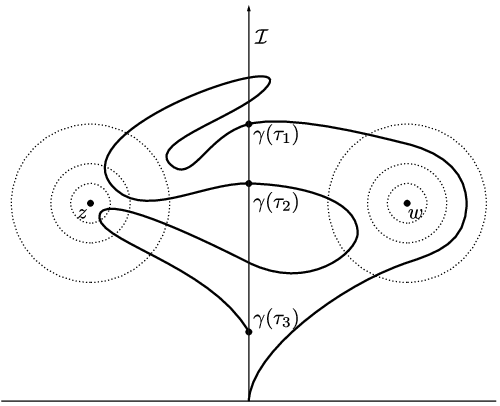}\vspace*{-2pt}

\caption{A curve $\gamma$ (shown in bold) in $V_{z,\pi}$ with $\pi=
[(0,0,0), (0,1,1), (2,1,0), (3,1,0)]$.}\vspace*{-3pt}
\label{combEx}
\end{figure}

Define $K_l$ for $1\leq l \leq k$ by
\[
K_l = \cases{
i_{l-1}, &\quad if $R_l = 0$, \cr
j_{l-1}, &\quad if $R_l = 1$.}
\]

The next proposition gives the fundamental estimate.\vspace*{-2pt}
\begin{proposition} \label{fundest}
There exist $c$ and an $\alpha>0$ such that
the following holds. Let $i,j,k$ be integers, and let
$\pi\in\seq_k(i,j,0)$. Then
\[
\Prob[V_{z,\pi} \cap\{\xi_w< \infty\}]\le
c^k 2^{(m+n)(d-2)}
e^{-\alpha(i+j-n)}
\prod_{l=1}^k e^{-\alpha K_l }.\vspace*{-3pt}
\]
\end{proposition}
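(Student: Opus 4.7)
The plan is to iterate the strong Markov property across the renewal sequence $\tau_0 < \tau_1 < \cdots < \tau_k = \sigma$ and then apply a one-point estimate for the $\xi_w$ event after $\sigma$. At each transition $\tau_{l-1} \to \tau_l$ I would use Lemma \ref{MainEstimate1} when $R_l = 0$ (and its symmetric $w$-analogue when $R_l = 1$) to bound the conditional probability by $c \, \Delta_{\tau_{l-1}}(z)^\alpha \, r_l^{2-d}$ or $c \, \Delta_{\tau_{l-1}}(w)^\alpha \, r_l^{2-d}$ respectively, where the halving ratio satisfies $r_l \asymp 2^{-(i_l - i_{l-1})}$ or $r_l \asymp 2^{-(j_l - j_{l-1})}$. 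The $\Delta^{\alpha}$ factors assemble directly into $\prod_l e^{-\alpha K_l}$. Because the ``non-halving'' coordinate grows by at most one per step, telescoping gives $\sum_{l:R_l=0}(i_l - i_{l-1}) \ge i - i_0 - k$ and analogously for $j$; after absorbing a factor $2^{(2-d)k}$ into $c^k$ and using that $i_0, j_0 = O(1)$, the product of renewal bounds is of order $c^k \cdot 2^{(d-2) i} \cdot 2^{(d-2) j} \cdot \prod_l e^{-\alpha K_l}$.

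For the final leg after $\sigma$, the strong Markov property together with Lemma \ref{july17.3} gives $\Prob(\xi_w < \infty \mid \F_\sigma) \le c \, 2^{(d-2)(m-j)} (\Delta_\sigma(w)/\Delta^*_\sigma(w))^{\beta/2}$, leading to
\[
\Prob[V_{z,\pi} \cap \{\xi_w < \infty\}] \le c^k \cdot 2^{(d-2)(i+m)} \prod_l e^{-\alpha K_l} \cdot \E\bigl[1_{V_{z,\pi}} \bigl(\Delta_\sigma(w)/\Delta^*_\sigma(w)\bigr)^{\beta/2}\bigr].
\]
Since $i \ge n$ and $d - 2 < 0$, the identity $2^{(d-2)i} = 2^{(d-2)n} e^{-(2-d)(\log 2)(i-n)}$ converts the prefactor into $2^{(d-2)(n+m)} e^{-\alpha(i-n)}$ for any $\alpha \le (2-d)\log 2$, already supplying the $e^{-\alpha(i-n)}$ half of the desired $e^{-\alpha(i+j-n)}$.

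The main obstacle is extracting the remaining $e^{-\alpha j}$ decay from the expectation. If $\Delta^*_\sigma(w)$ were bounded below by a fixed constant, then $(\Delta_\sigma(w)/\Delta^*_\sigma(w))^{\beta/2} \le c \, 2^{-j\beta/2}$ would finish the job; the difficulty is that $\Delta^*_\sigma(w)$ can be small whenever the curve has come near $w$ from both sides. I would address this by a dyadic decomposition
\[
\E\bigl[1_{V_{z,\pi}} \bigl(\Delta_\sigma(w)/\Delta^*_\sigma(w)\bigr)^{\beta/2}\bigr] \le \sum_{s' \ge 0} 2^{-(j-s')\beta/2} \, \Prob\bigl[V_{z,\pi} \cap \{\Delta^*_\sigma(w) \le 2^{-s'}\}\bigr].
\]
Since $\Delta^*_t(w)$ is monotone non-increasing, the event $\{\Delta^*_\sigma(w) \le 2^{-s'}\}$ forces a first renewal index $l \le k$ at which $\Delta^*_{\tau_l}(w) \le 2^{-s'}$; at that step I replace Lemma \ref{MainEstimate1} by Lemma \ref{MainEstimate3} (for $R_l = 0$) or by the $w$-symmetric version of Lemma \ref{MainEstimate4} (for $R_l = 1$), which tracks the extra constraint on $\Delta^*(w)$ and contributes an additional factor $c \, 2^{-s'\alpha}$, while the other transitions are bounded as before. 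Lemma \ref{MainEstimate2} is used to control the scenario in which $\Delta^*(w)$ drops between renewals rather than exactly at one. A union bound over $l \in \{1, \ldots, k\}$ contributes a factor $k$ absorbed into $c^k$, so $\Prob[V_{z,\pi} \cap \{\Delta^*_\sigma(w) \le 2^{-s'}\}] \le c^k \, 2^{-s'\alpha}$ times the main renewal bound. Taking $\alpha$ strictly less than $\beta/2$, the geometric series $\sum_{s'} 2^{-s'\alpha - (j-s')\beta/2}$ sums to $O(e^{-\alpha' j})$ for some $\alpha' > 0$, which after relabeling $\alpha$ yields the claimed inequality.
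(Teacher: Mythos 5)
Your proposal follows essentially the same strategy as the paper: apply Lemma \ref{MainEstimate1} step-by-step along the renewal sequence, use Lemma \ref{july17.3} for the final $\xi_w$ leg with the $(\Delta_\sigma(w)/\Delta^*_\sigma(w))^{\beta/2}$ factor, and then handle the possibility of small $\Delta^*_\sigma(w)$ by locating the renewal step at which $\Delta^*(w)$ drops below a threshold and replacing Lemma \ref{MainEstimate1} by Lemma \ref{MainEstimate3} or the $w$-symmetric Lemma \ref{MainEstimate4} there, with a union bound over the $k$ possible steps. The one structural difference is that you run a full dyadic decomposition over the scale $2^{-s'}$ of $\Delta^*_\sigma(w)$ and sum a geometric series, whereas the paper simply splits into the two cases $\Delta^*_{\tau_k}(w) \geq 2^{-j/2}$ and $<2^{-j/2}$; the single split already produces the needed $2^{-cj}$ decay with less bookkeeping (your sum is in fact finite since $s' \lesssim j$, so it converges regardless of the comparison between $\alpha$ and $\beta/2$, but the binary split avoids worrying about it at all). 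You also correctly identify that it is Lemma \ref{MainEstimate3} that applies when $R_l=0$ and the $w$-symmetric Lemma \ref{MainEstimate4} when $R_l=1$.

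One slip worth flagging: the displayed inequality
\[
\Prob[V_{z,\pi}\cap\{\xi_w<\infty\}] \le c^k \, 2^{(d-2)(i+m)}\prod_l e^{-\alpha K_l}\cdot \E\bigl[1_{V_{z,\pi}}(\Delta_\sigma(w)/\Delta^*_\sigma(w))^{\beta/2}\bigr]
\]
is not a valid consequence of the Markov property: you cannot multiply the bound on $\Prob[V_{z,\pi}]$ by the expectation, since the renewal-product bound and the expectation both see $V_{z,\pi}$. The correct statement is
\[
\Prob[V_{z,\pi}\cap\{\xi_w<\infty\}] \le c\, 2^{(d-2)(m-j)}\,\E\bigl[1_{V_{z,\pi}}(\Delta_\sigma(w)/\Delta^*_\sigma(w))^{\beta/2}\bigr],
\]
and the factors $c^k 2^{(d-2)(i+j)}\prod_l e^{-\alpha K_l}$ should emerge from the estimate of $\Prob[V_{z,\pi}\cap\{\Delta^*_\sigma(w)\le 2^{-s'}\}]$ inside the dyadic sum. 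As written, your prefactor plus the dyadic estimate would double-count the renewal product. Since the subsequent steps of your argument do supply that renewal product from within the sum, the proof goes through once this line is repaired; it is a notational over-count rather than a missing idea.
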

\begin{pf}
Note that on the event $V_z$ we may say by Lemma~\ref{july173} that
\[
\Prob\{\xi_w< \infty| \F_k \}
\leq c \biggl[\frac{2^{-j}}{\Delta^*_{\tau_k}(w)}\biggr]^{\beta/2} 2^{(m-j)(d-2)}.
\]
We will proceed by splitting the event $V_{z,\pi}$ into the case where
$\Delta^*_{\tau_k}(w) \ge2^{-j}$ and the case where it is not.

Before doing so, we will discuss how to estimate $\Prob[V_{z,\pi}]$
without any further conditions, as it is important to our bounds below.
By the definition of $\seq_k(i,j,0)$ we have that
\[
\pi= [(i_0,j_0,0), (i_1,j_1,R_1), \ldots, (i_k,j_k,0)],
\]
where the sequence of triples is a legal sequence as described above.\vadjust{\goodbreak}

We will estimate this probability by applying Lemma~\ref{MainEstimate1}
to approximate the probability of each step; which is to say the
probability that given that the $\mbox{SLE}$ at time $\tau_l$ yields the
triple $(i_l, j_l, R_l)$ we get the triple $(i_{l+1}, j_{l+1},
R_{l+1})$ at the time~$\tau_{l+1}$. As the two cases are similar, we
assume that $R_{l+1} = 0$. Since $K_{l+1} = i_l$, we know the distance
to $z$ at time $\tau_l$ is less than $2^{-K_{l+1}}$. We wish the
distance from $z$ to the $\mbox{SLE}$ curve to decrease by at least a factor
of $2^{i_{l+1}-i_l-1}$. The probability of this is shown by Lemma~\ref
{MainEstimate1} to be of the order $c2^{-\alpha
K_{l+1}}2^{(2-d)(i_{l+1}-i_l)}$ by absorbing factors into $\alpha$ and
$c$ we may rewrite this bound as $c e^{-\alpha K_{l+1}}2^{(d-2)(i_{l+1}
- i_{l} + j_{l+1} - j_{l})}$ as $j_{l+1}$ can be at most one greater
than~$j_{l}$.

To get the probability of $V_{z,\pi}$, we need only multiply through
each of these $k$ individual probabilities to get that
\begin{eqnarray*}
\Prob[V_{z,\pi}] & \le & \prod_{l = 1}^k c e^{-\alpha
K_{l}}2^{(d-2)(i_{l} - i_{l-1} + j_{l} - j_{l-1})} \\
& = & c^k \exp\Biggl\{ \log(2) (d-2)\sum_{l = 1}^k (i_{l} - i_{l-1} + j_{l} -
j_{l-1})\Biggr\} \prod_{l = 1}^k e^{-\alpha K_{l}} \\
& = & c^k 2^{(d-2)(i+j)} \prod_{l = 1}^k e^{-\alpha K_{l}},
\end{eqnarray*}
where we have absorbed the $2^{(d-2)(i_0+j_0)}$ (bounded above by a
constant given the restrictions of $z$ and $w$) into the $c^k$ term in
the last line by redefining $c$.

We now return to our main estimate. Note that, when $\Delta^*_{\tau
_k}(w) \ge2^{-j/2}$, we have
\begin{eqnarray*}
&&\Prob[V_{z,\pi} \cap\{\Delta^*_{\tau_k}(w) \ge2^{-j/2}\} \cap\{
\xi
_w< \infty\}]\\
&&\qquad \le c \Prob[V_{z,\pi} \cap\{\Delta^*_{\tau_k}(w) \ge2^{-j/2}\}]
2^{-\beta j/4} 2^{(m-j)(d-2)} \\
&&\qquad \le c \Prob[V_{z,\pi}] 2^{-\beta j/4} 2^{(m-j)(d-2)} \\
&&\qquad \le c^k 2^{-\beta j/4} 2^{(m-j)(d-2)} 2^{(i+j)(d-2)} \prod_{l=1}^k
e^{-\alpha K_l } \\
&&\qquad = c^k 2^{-\beta j/4} 2^{(m+n)(d-2)} 2^{(i-n)(d-2)}\prod_{l=1}^k
e^{-\alpha K_l } \\
&&\qquad \le c^k 2^{(m+n)(d-2)} 2^{-\mu(i+j-n)} \prod_{l=1}^k e^{-\alpha K_l },
\end{eqnarray*}
where the third line follows from the above discussion, and the last
line holds for some choice of $\mu> 0$.

Thus we need only understand the event
\begin{eqnarray*}
&&\Prob[V_{z,\pi} \cap\{\Delta^*_{\tau_k}(w) < 2^{-j/2}\} \cap\{\xi_w<
\infty\}] \\
&&\qquad \le c \Prob[V_{z,\pi} \cap\{\Delta^*_{\tau_k}(w) < 2^{-j/2}\}]
2^{(m-j)(d-2)}.
\end{eqnarray*}
For the event $\{\Delta^*_{\tau_k}(w) < 2^{-j/2}\}$ there must be at
least one $l$ such that $\Delta^*_{\tau_l}(w) \ge2^{-j/2}$ and
$\Delta
^*_{\tau_{l+1}}(w) < 2^{-j/2}$. By using Lemma~\ref{MainEstimate3} for
that single step if $R_l =1$ or Lemma~\ref{MainEstimate4} if $R_l = 0$
and~\ref{MainEstimate1} for all other steps, we have that
\begin{eqnarray*}
&&\Prob[V_{z,\pi} \cap\{\Delta^*_{\tau_k}(w) < 2^{-j/2}\}]\\
&&\qquad \le\sum_{l = 0}^{k-1} \Prob[V_{z,\pi} \cap\{\Delta^*_{\tau_l}(w)
\ge2^{-j/2}; \Delta^*_{\tau_{l+1}}(w) < 2^{-j/2}\}] \\
&&\qquad \le kc^k 2^{-\alpha j/2} 2^{(i+j)(d-2)} \prod_{l=1}^k e^{-\alpha K_l }.
\end{eqnarray*}
By combining this with the above event we see that
\begin{eqnarray*}
&&\Prob[V_{z,\pi} \cap\{\Delta^*_{\tau_k}(w) < 2^{-j/2}\} \cap\{\xi_w<
\infty\}] \\
&&\qquad \le kc^k 2^{(m-j)(d-2)} 2^{-\alpha j/2} 2^{(i+j)(d-2)} \prod_{l=1}^k
e^{-\alpha K_l } \\
&&\qquad \le c^k 2^{(m+n)(d-2)} 2^{-\mu(j+i-n)} \prod_{l=1}^k e^{-\alpha K_l }
\end{eqnarray*}
for some choice of $\mu$ and where $c$ is being used generically to
absorb the leading $k$. Thus by choosing $\mu$ and $\alpha$ to be the
same (which we can do by taking the minimum for both) we get the
desired result.
\end{pf}

We will now show how this proposition implies the main theorem. The
proof rests upon the following combinatorial lemma.
\begin{lemma}\label{comblem} For every $\alpha>0$, there exist $c$ and
a $u>0$ such that for all $k$
\[
\sum_{\pi\in\seq_k(i,j,0)}
\prod_{l=1}^k e^{-\alpha K_l }
\leq c e^{-u k^2}.
\]
\end{lemma}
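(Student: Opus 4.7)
The plan is to organize the sum by the pattern $R = (R_1, \ldots, R_k) \in \{0,1\}^k$ (with $R_k = 0$) and exploit that, given $R$, the $i$- and $j$-evolutions are independent of each other. Let $a$ be the number of zero-steps in $R$ and $b = k-a$ the number of one-steps, so that $a \geq 1$.

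The first observation will be a key monotonicity: listing the zero-step positions as $p_1 < \cdots < p_a$ and setting $\kappa_s := K_{p_s} = i_{p_s - 1}$, the sequence $\kappa_1 < \kappa_2 < \cdots < \kappa_a$ is strictly increasing, since at each zero-step $i$ strictly increases by at least one while at each one-step $i$ is non-decreasing. Combined with $\kappa_1 \geq i_0 \geq 1$, this gives $\kappa_s \geq s$, and the analogous bound $\lambda_t \geq t$ holds for the $K$-values at one-step positions. Hence
\[
\sum_{l=1}^k K_l = \sum_{s=1}^a \kappa_s + \sum_{t=1}^b \lambda_t \geq \tfrac{a(a+1) + b(b+1)}{2} \geq \tfrac{k^2}{4},
\]
which is the source of the Gaussian factor $e^{-\alpha k^2/4}$.

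Next I would turn to counting. For a fixed pattern $R$, the sum over legal sequences with this pattern factors as $W(R) = A(R) B(R)$ because, once $R$ is fixed, the two coordinate evolutions (each starting at any positive integer and ending at the prescribed $i$ or $j$) are independent: on zero-step positions $x_l \geq 1$ and $y_l \in \{0,1\}$, while on one-step positions $y_l \geq 1$ and $x_l \in \{0,1\}$. For $A(R)$, I parametrize the $i$-trajectory by $(\kappa_s)_{s=1}^a$ together with the intermediate $i$-values at the zero-step jumps. The number of admissible $i$-paths in each inter-zero-step segment is a partial sum of binomial coefficients bounded by $2^{p_{s+1}-p_s-1}$, and the telescoping identity
\[
(p_1 - 1) + \sum_{s=1}^{a-1}(p_{s+1}-p_s-1) + (k-p_a) = k - a
\]
gives the clean bound $2^{k-a}$ for the product across the initial run, the middle segments, and the terminal run. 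Combining with the geometric series estimate
\[
\sum_{1\leq\kappa_1<\cdots<\kappa_a} e^{-\alpha(\kappa_1+\cdots+\kappa_a)} \leq \prod_{s=1}^a \sum_{\kappa_s\geq s} e^{-\alpha\kappa_s} = \frac{e^{-\alpha a(a+1)/2}}{(1-e^{-\alpha})^a}
\]
yields $A(R) \leq 2^{k-a} C^a e^{-\alpha a(a+1)/2}$ with $C := (1-e^{-\alpha})^{-1}$, and symmetrically $B(R) \leq 2^{k-b} C^b e^{-\alpha b(b+1)/2}$. Multiplying gives $W(R) \leq 2^k C^k e^{-\alpha(a^2+b^2+k)/2}$.

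Finally, summing over the $\binom{k-1}{a-1} \leq 2^{k-1}$ patterns with a given $a$ and using $a^2+b^2 \geq k^2/2$, I would arrive at
\[
\sum_{\pi \in \seq_k(i,j,0)} \prod_{l=1}^k e^{-\alpha K_l} \leq (4C)^k e^{-\alpha k^2/4},
\]
which is at most $c\,e^{-uk^2}$ for any fixed $u<\alpha/4$ and an appropriate $c$. The main obstacle will be the telescoping bookkeeping in the counting step: the partial-binomial sums across the initial, middle, and terminal segments must combine exactly to $2^{k-a}$ and $2^{k-b}$, so that the product with the two $2^k$-type factors arising from the pattern count and the interior count is exactly $4^k$, which is then cleanly overwhelmed by the Gaussian decay coming from the strict monotonicity of the $\kappa$- and $\lambda$-sequences.
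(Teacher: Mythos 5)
Your proof is correct and takes essentially the same route as the paper's: decompose the sum by the pattern $R$ of zero- and one-steps, exploit the strict monotonicity of the zero-step and one-step subsequences of $K_l$ (so that $\kappa_s \geq s$, $\lambda_t \geq t$) to produce the Gaussian factor $e^{-\alpha k^2/4}$, and pay only an exponential-in-$k$ price for the remaining binomial counting. Your write-up is somewhat more explicit than the paper's---which introduces the auxiliary quantity $\Sigma_k$ directly over strictly increasing sequences, bounds it recursively, and then takes a max over the split $(a,b)$ rather than summing over patterns---but the core mechanism and the resulting bound are the same.
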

\begin{pf} We fix $\alpha$ and allow all constants
to depend on $\alpha$.
Let
\[
\Sigma_k = \sum_{[m]_k} \prod_{l=1}^k e^{-\alpha m_l },
\]
where the sum is over all strictly increasing finite sequences of
positive integers, written as $[m]_k := [m_1, m_2, \ldots, m_k]$. We
first claim that
\[
\Sigma_k \leq c_1 e^{- \alpha k^2/4}.
\]
Consider the following recursive relation:
\begin{eqnarray*}
\Sigma_k & = & \sum_{[m]_k} \prod_{l=1}^k e^{-\alpha m_l } \\
& \le & \sum_{[m]_{k-1}}\sum_{m_k = k}^\infty e^{-\alpha m_k} \prod
_{l=1}^{k-1} e^{-\alpha m_l } \\
& = & \Sigma_{k-1} \sum_{j = k}^\infty e^{-\alpha j} \\
& \le & c_2 \Sigma_{k-1} e^{-\alpha k}.
\end{eqnarray*}
Applying this bound inductively to $\Sigma_k$ yields
\[
\Sigma_k \le c_2^k \exp\Biggl\{-\alpha\sum_{i = 1}^k i\Biggr\}
\le c_1 e^{-\alpha k^2/4}
\]
as desired.

To choose a legal sequence in $\seq_k(i,j,0)$, there are $2^{k-1}$
ways to choose the values $R_1,\ldots,R_{k-1}$. Given the
values of $R_1,\ldots,R_{k-1}$ we choose the increases of
the integers. If $R_l = 0$, then $i_l > i_{l-1}$ and
$j_l = j_{l-1}$ or $j_l = j_{l-1} + 1$. The analogous
inequalities hold if $R_1 = 1$. There are $2^k$ ways to choose
whether $j_l = j_{l-1}$ or $j_l = j_{l-1} + 1$ (or the
corresponding jump for $i_l$ if $R_1 = 1$). In the other
components we have to increase by an integer. We therefore
get that the sum is bounded above by
\begin{eqnarray*}
2^{k-1} \max_{0\le l\le k-1} 2^l\Sigma_l \cdot2^{k-l-1}\Sigma_{k-l-1}
& \le & c^k \max_{0 \le l \le k-1} e^{- \alpha l^2/4} e^{- \alpha
(k-l-1)^2/4} \\
& \le & c e^{-u k^2}.
\end{eqnarray*}
%
\upqed\end{pf}

By combining Proposition~\ref{fundest} and Lemma~\ref{comblem}, there
exist $c$ such that
\[
\sum_{k=1}^\infty
\sum_{\pi\in\seq_k(i,j,0)} \Prob[V_{z,\pi}\cap\{\xi_w< \infty\}
]
\leq c 2^{(m+n)(d-2)}
e^{-\alpha(j + i-n)},
\]
and hence by summing over $i \geq n-1,j\geq0$ we get
\begin{eqnarray*}
\Prob[V_z \cap\{\xi< \infty\}] &\le& \Prob[V_z \cap\{\xi_w <
\infty\}]
\\
& = &
\sum_{i = n-1}^\infty\sum_{j = 0}^\infty\sum_{k=1}^\infty\sum
_{\pi
\in\seq_k(i,j,0)}
\Prob[V_{z,\pi} \cap\{\xi_w < \infty\} ] \\
&\le &c 2^{(m+n)(d-2)} = c \epsilon^{2-d} \delta^{2-d}.
\end{eqnarray*}
By the symmetry of $z,w$ we have the bound
\[
\Prob[V_w \cap\{\xi< \infty\}]\le c \epsilon^{2-d} \delta^{2-d}
\]
and hence
\[
\Prob\{\Delta_\infty(z) \leq\epsilon,
\Delta_\infty(w) \leq\delta\} = \Prob\{\xi< \infty\} \le c
\epsilon
^{2-d} \delta^{2-d}
\]
as required to complete the proof of Proposition~\ref{main2}, and hence
the proof of Beffara's estimate.

With the proof set up in this way, we may now rapidly complete our
proof of the existence of the multi-point Green's function. By mirroring
the proof above, we may conclude that for $\rho= e^{-\ell}$ (and hence
for all $\rho$) that
\begin{eqnarray*}
\Prob[V_z \cap\{\xi< \infty, \Delta_\sigma(w) \leq\rho\}] &\le&
\Prob[V_z \cap\{\xi_w < \infty, \Delta_\sigma(w) \leq\rho\}] \\
& = &
\sum_{i = n-1}^\infty\sum_{j = \ell}^\infty\sum_{k=1}^\infty\sum
_{\pi\in\seq_k(i,j,0)}
\Prob[V_{z,\pi} \cap\{\xi_w < \infty\} ] \\
&\le& c 2^{(m+n)(d-2)} e^{-\alpha\ell}= c \epsilon^{2-d} \delta^{2-d}
\rho^\alpha.
\end{eqnarray*}

This proves Proposition~\ref{fromBef} and hence completes the proof of
the existence of the multi-point Green's function.

\begin{appendix}\label{app}
\section{The existence of the $I_t$}\label{ItAppendix}

The aim of this Appendix is to prove the existence of the separating
set $I_t$ desired above.
\begin{definition*}
Let $\gamma$ be a curve in the upper half-plane, and let $z,w,\imax$ be
a pair or distinct points in $\Half$ separated by the line $\imax$. Let
$\imax_t = \imax\setminus\gamma(0,t]$. We will denote by $I_t$ the
unique open interval contained in $\imax$ such that the following four
properties hold. For any $t \le t'$ we have:
\begin{itemize}
\item$I_t$ is a connected component of $\imax_t$,
\item the $I_t$ are decreasing, which is to say $I_{t'} \subseteq I_t$,
\item$H_t \setminus I_t$ has exactly two connected components, one
containing $z$ and one containing $w$ and
\item$I_t = I_{t'}$ whenever $\gamma(t,t'] \cap\imax= \varnothing$.
\end{itemize}
\end{definition*}

It may, at first glance, seem simple to define such sets inductively.
However, in general, the set of times that a curve $\gamma$ crosses
$\imax$ may be uncountable and have no well-defined notion of ``the
previous crossing.'' To avoid this issue and show this notion is well
defined, we require a few topological lemmas.\vadjust{\goodbreak}

\begin{lemma}
Let $U$ be a connected open set in $\C$ separated by a smooth simple
curve $\eta\dvtx [0,1] \rightarrow\overline U$. Let $V \subset U$ be a
connected open subset. Then for any points $z,w \in V$, there exits a
curve $\xi\dvtx [0,1] \rightarrow V$ from $z$ to $w$ which intersects $\eta$
a finite number of times.
\end{lemma}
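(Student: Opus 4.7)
The plan is to build $\xi$ by modifying an arbitrary continuous path from $z$ to $w$ inside $V$, using the smoothness of $\eta$ to gain local control on the crossings. Since $V$ is open and connected in $\C$, it is path-connected, so I would start by picking any continuous $\xi_0:[0,1]\to V$ with $\xi_0(0)=z$, $\xi_0(1)=w$, and set $K:=\xi_0([0,1])$, which is compact and contained in $V$.

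The next step is to cover $K$ by small disks whose intersection with $\eta$ is well-controlled. For each $p\in K$ I would choose an open disk $B_p$ with $\overline{B_p}\subset V$ satisfying either $\overline{B_p}\cap\eta([0,1])=\emptyset$ (when $p$ is not on $\eta$) or, using the smoothness of $\eta$ and a local tubular-neighborhood chart straightening $\eta$ to a line segment, $B_p\cap\eta$ is a single smooth simple subarc of $\eta$ meeting $\partial B_p$ at exactly its two endpoints and thereby cutting $B_p$ into exactly two path-components. By compactness of $K$, I would extract a finite subcover $B_1,\ldots,B_N$, and then apply the Lebesgue number lemma to $\xi_0$ to obtain a partition $0=t_0<t_1<\cdots<t_n=1$ with each $\xi_0([t_{i-1},t_i])$ contained in some $B_{j(i)}$.

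To finish, I would replace $\xi_0$ by a locally straightened curve. First, if any subdivision point $\xi_0(t_i)$ happens to lie on $\eta$, perturb it slightly within the open set $V\cap B_{j(i)}\cap B_{j(i+1)}$ to a point off $\eta$; this is possible because $\eta$ is nowhere dense in $\C$. With the endpoints now off $\eta$, each segment has its two endpoints in well-defined components of $B_{j(i)}\setminus\eta$: if they lie in the same component, I would connect them by any path within that component (zero crossings of $\eta$), and if they lie in opposite components, by a short smooth path in $B_{j(i)}$ crossing the arc $\eta\cap B_{j(i)}$ transversally at a single point. Concatenating the $n$ pieces produces a curve $\xi:[0,1]\to V$ from $z$ to $w$ that meets $\eta$ at most $N$ times.

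The step I expect to be the main technical obstacle is justifying the local picture near $\eta$, namely that $B_p$ can be chosen small enough that $B_p\cap\eta$ is a single smooth arc separating $B_p$ into precisely two path-components. This is where the hypothesis that $\eta$ be smooth (as opposed to merely continuous) is essential, via a tubular-neighborhood chart for the one-manifold $\eta((0,1))$. A minor edge case arises if $p$ happens to be an endpoint of $\eta$ lying inside $U$, where one can either take $B_p$ small enough to miss that endpoint or work with a half-tubular chart; if endpoints of $\eta$ lie on $\partial U$ they are not in $K\subset V$, so this case does not arise at all.
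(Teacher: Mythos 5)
Your proof is correct, and it takes a genuinely different (though closely related) route from the paper's. The paper argues by declaring two points of $V$ equivalent if they can be joined by a path meeting $\eta$ finitely often, then shows each component of $V\setminus\eta$ sits in a single class and that adjacent components are merged by a one-crossing path through a straightening chart on $\eta$; connectedness of $V$ then forces a single class. You instead start from an arbitrary path $\xi_0$ in $V$, exploit compactness of its image and the Lebesgue number lemma to chop it into finitely many pieces each lying in a disk where $\eta$ has been straightened to a diameter (or is absent), and replace each piece by a controlled segment crossing $\eta$ at most once. Both arguments rest on exactly the same local ingredient — that smoothness of $\eta$ gives a chart in which a small disk is cut into precisely two pieces — but yours is more constructive: it produces an explicit path and an explicit crossing bound, at the price of the Lebesgue-number bookkeeping and the small perturbation step to push subdivision points off $\eta$. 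One minor slip: your final crossing count should be the number of subintervals $n$ from the Lebesgue partition, not the number of disks $N$ in the subcover (a segment may revisit a disk), but either way the count is finite, which is all the lemma asks.
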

\begin{pf}
This proof mirrors the classic proof that a connected open set is path
connected. Define an equivalence relation on $V$ where points $z,w \in
V$ are equivalent, if $z$ can be connected to $w$ by a curve $\xi$
which intersects $\eta$ a finite number of times. This can readily be
shown to satisfy the requirements of an equivalence relation.

Let $V_\alpha$ denote the open connected components of $V \setminus
\eta
$. If $z,w$ are both in the same $V_\alpha$, then they may be connected
by a curve which does not intersect $\eta$; hence each $V_\alpha$ is
contained entirely in a single equivalence class.

Consider a disc, $D$, contained in $V$ centered on a point $\eta(t_0)$
for some $t_0 \in(0,1)$ with components $V_\alpha$ and $V_\beta$ on
either side of $\eta$ near this point. Since $\eta$ is smooth and
simple, by choosing $D$ sufficiently small we may find a diffeomorphism
$\phi$ so that $\phi(D) = \Disk$ and $\phi(\eta\cap D) = \{it\dvtx t
\in
(-1,1)\}$. Connect $-1/2$ to $1/2$ by the straight line between them,
which only intersects the image of $\eta$ once. Taking the image of
this line under $\phi^{-1}$ gives a curve $\xi$ satisfying the
conditions of the equivalence relation connecting two points, one in
$V_\alpha$ and one in $V_\beta$. Thus components of $V \setminus\eta$
which are directly separated by $\eta$ are in the same equivalence
class. Since $V$ is connected, the only equivalence class is $V$ itself.
\end{pf}

Suppose $U$ is a connected open set in $\C$ separated by a curve $\eta
\dvtx(0,1)\rightarrow U$ into two components $U_1,U_2$ with points $z\in
U_1$ and $w \in U_2$. Let $V$ be a connected subset of $U$. Define
$\mathcal{D}_V(z,w;\eta)$ to be the the set of connected components of
$V \cap\eta$ which disconnects $z$ from $w$ in $V$.
\begin{corollary} \label{oddCor}
Let $U$ be a connected open set in $\C$ separated by a smooth simple
curve $\eta\dvtx [0,1] \rightarrow\overline U$ into two components
$U_1,U_2$ with $z \in U_1$ and $w \in U_2$. Let $V \subset U$ be a
connected open subset containing $z$ and $w$. Then $|\mathcal
{D}_V(z,w,\eta)|$ is finite and odd.
\end{corollary}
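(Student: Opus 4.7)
\textit{Proof plan.}

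The plan is to apply the preceding lemma to obtain a path $\xi:[0,1] \to V$ from $z$ to $w$ whose image meets $\eta$ at only finitely many points, which a small local perturbation (using smoothness of $\eta$) makes transversal. Since $z \in U_1$, $w \in U_2$, and each transversal crossing of $\eta$ switches between $U_1$ and $U_2$, the total number $N$ of crossings is odd.

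Finiteness is then immediate: every component $C \in \mathcal{D}_V(z,w;\eta)$ must contain at least one crossing of $\xi$, for otherwise $\xi$ would remain in $V \setminus C$ and connect $z$ to $w$ there, contradicting the disconnecting property of $C$. Hence $|\mathcal{D}_V(z,w;\eta)| \le N < \infty$.

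For the parity I would encode the topology by a bipartite graph $G$ whose vertices are the connected components of $V \setminus \eta$ (each contained in $U_1$ or in $U_2$, giving the bipartition) and whose edges are the connected components of $V \cap \eta$, where each component $C$ joins the two components of $V \setminus \eta$ on its two sides (well-defined because the local $+$ and $-$ sides of $C$ are each connected and one lies in $U_1$, the other in $U_2$). The path $\xi$ determines a walk in $G$ from the vertex $W^z$ containing $z$ to the vertex $W^w$ containing $w$, and $\mathcal{D}_V(z,w;\eta)$ corresponds exactly to the bridges of $G$ between $W^z$ and $W^w$. In the intended application $V$ is simply connected, and in that case $G$ is a tree: by induction on the components of $V \cap \eta$, each such component is a crosscut of a simply connected subdomain and therefore splits it in two, contributing exactly one new vertex and one new edge. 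Thus $|\mathcal{D}_V(z,w;\eta)|$ equals the length of the unique tree path from $W^z$ to $W^w$, and bipartiteness forces any path between differently colored vertices to have odd length.

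The main obstacle is establishing the tree property of $G$, which ultimately reduces to the Jordan-curve fact that a crosscut of a simply connected planar domain separates it into exactly two pieces. Once this structural input is in hand, the bipartite parity computation yields both finiteness and oddness with no further analytic input.
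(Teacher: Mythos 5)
Your finiteness argument is the same one the paper uses: the path $\xi$ from the preceding lemma must meet every disconnecting component. For the parity you take a genuinely different route. The paper removes only the disconnecting components from $V$ and asserts that the resulting $|\mathcal{D}_V|+1$ pieces are ``alternately contained in $U_1$ and $U_2$''; you instead build the full adjacency graph $G$ whose vertices are the components of $V\setminus\eta$ and whose edges are the components of $V\cap\eta$, observe that $G$ is bipartite via the $U_1/U_2$ coloring and, when $V$ is simply connected, a tree, and read off $|\mathcal{D}_V|$ as the (necessarily odd) length of the tree path from $W^z$ to $W^w$. Your version is arguably the more careful of the two: a piece of $V$ minus only the disconnecting arcs can still contain non-disconnecting arcs of $\eta$ and so meet both $U_1$ and $U_2$, which means the paper's ``alternately contained'' claim does not hold literally as written; passing to the components of $V\setminus\eta$, as you do, is exactly what repairs it. You also correctly put your finger on the fact that simple connectivity of $V$ is doing real work. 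Without it the corollary is actually false: take $U=\Disk$, $\eta$ a diameter, $z$ and $w$ on opposite sides of $\eta$ at radius $1/2$, and $V=\{1/4<|\zeta|<3/4\}$; then neither arc of $V\cap\eta$ separates $z$ from $w$ in $V$, so $|\mathcal{D}_V|=0$. The corollary as stated omits this hypothesis (and the paper's proof silently relies on it as well), but it is always satisfied where the corollary is applied, since there $V=H_{t'}$ is simply connected.
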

\begin{pf}
To see that the number is finite, take the curve $\xi$ between $z$ and
$w$ as in the above lemma, and note that any $\eta_i$ which separates
$z$ from $w$ must intersect~$\xi$.

To see that it is odd, consider the connected components of $V' := V
\setminus\bigcup_{\gamma\in\mathcal{D}_V(z,w;\eta)} \gamma$. There
are exactly $|\mathcal{D}_V(z,w;\eta)| + 1$ such components. $\eta$
separates $U$ into two components, and hence the components of $V'$ are
alternately contained in $U_1$ and $U_2$. Since the component
containing $z$ is in $U_1$, and the component containing $w$ is in
$U_2$, there must be an even number of components of $V'$, which makes
$|\mathcal{D}_V(z,w;\eta)|$ odd.
\end{pf}

This general topological lemma has the following consequence in our
setting. To simplify notation, we will define $\mathcal{D}_t =
\mathcal
{D}_{H_t}(z,w,\imax)$.
\begin{corollary} \label{oddCor2}
Fix $0 \le t' \le t < \infty$. Then a connected component $I$ of
$\imax
_{t'}$ separates $z$ from $w$ in $H_{t'}$ if and only if the number of
elements of $\mathcal{D}_t$ contained in $I$ is odd.
\end{corollary}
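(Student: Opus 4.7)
The plan is to establish Corollary~\ref{oddCor2} by a parity argument that reduces both directions of the biconditional to a single counting identity, namely the number of transversal intersections of a judiciously chosen path $\xi$ from $z$ to $w$ with the line $\imax$, counted modulo $2$ in two different ways.

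I would first record two structural observations. Every connected component of $\imax_{t'}$ is entirely contained in a single connected component of $\Half \setminus \gamma(0,t']$, because a maximal open subinterval of $\imax \setminus \gamma(0,t']$ is connected and cannot transition between components of $\Half \setminus \gamma(0,t']$ without passing through $\gamma(0,t']$ (which would contradict maximality). Because $\gamma$ is non-crossing, the arc $\gamma(t',t]$ stays in the unbounded component $H_{t'}$, and so every bounded component of $\Half \setminus \gamma(0,t']$ is disjoint from $\gamma(t',t]$ and remains contained in a single bounded component of $\Half \setminus \gamma(0,t]$. Consequently, if $I$ lies in a bounded component of $\Half \setminus \gamma(0,t']$, then $I \cap H_{t'} = \emptyset$ (so $I$ does not separate $z,w$ in $H_{t'}$) and also $I \cap H_t = \emptyset$ (so no element of $\mathcal{D}_t$ is contained in $I$); the biconditional holds trivially.

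The substantive case is $I \subset H_{t'}$. Here $I$ is an open arc on $\imax$ whose two endpoints lie on $\partial H_{t'}$ or at infinity, so $I$ is a crosscut of the simply connected domain $H_{t'}$. Using the first lemma of the appendix with $U = \Half$, $V = H_t$, and separating curve $\imax$, I would choose a path $\xi : [0,1] \to H_t$ from $z$ to $w$ meeting $\imax$ in only finitely many points, and after a small perturbation assume all crossings are transversal. Since $\xi \subset H_t \subset H_{t'}$, every crossing lies in $\imax \cap H_t$, and so falls in one of the components of $\imax \cap H_t$ contained in $I$; each such component is itself a crosscut of $H_t$.

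Finally I would invoke the standard topological fact that, for any crosscut $\eta$ of a simply connected domain $V$ with $z,w \in V$, the set $V \setminus \eta$ has exactly two components, and a path in $V$ from $z$ to $w$ with finitely many transversal intersections with $\eta$ has $|\xi \cap \eta|$ odd if and only if $\eta$ separates $z$ from $w$ in $V$ (each transversal crossing switches components). Applied in $V = H_t$ to each component $J$ of $\imax \cap H_t$ contained in $I$, this tells us $|\xi \cap J|$ is odd precisely when $J \in \mathcal{D}_t$; summing over such $J$, the parity of $|\xi \cap I|$ equals the number of elements of $\mathcal{D}_t$ contained in $I$ modulo $2$. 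Applied in $V = H_{t'}$ with crosscut $I$, the same parity equals $1$ precisely when $I$ separates $z$ from $w$ in $H_{t'}$. Combining the two computations of the same parity yields the biconditional. The main obstacle is verifying the two structural preliminaries cleanly for a general non-crossing curve $\gamma$; once these are in hand, the crossing-parity fact is classical and the rest is bookkeeping.
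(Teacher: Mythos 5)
Your argument is correct, and it is close in spirit to the paper's proof (both ultimately rest on the finite-crossing lemma in Appendix~A and a mod-2 crossing count), but the organization is cleaner in two respects.  First, the paper proves the two directions separately: the ``only if'' direction is cited as an instance of Corollary~\ref{oddCor}, and the ``if'' direction is a proof by contradiction using a curve $\eta$ modified so that it ``crosses each element of $\mathcal{D}_t$ exactly once'' --- a modification that is a bit informal, since one must also ensure $\eta$ does not cross non-separating components of $\imax_t$ contained in $I$. You instead compute the parity of $|\xi\cap I|$ once, in two ways (as a crosscut of $H_{t'}$ and as a disjoint union of crosscuts of $H_t$), and read off both directions of the biconditional simultaneously; the transversality hypothesis replaces the paper's shortcut modification and does the bookkeeping more transparently.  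Second, you explicitly dispose of the degenerate case in which $I$ lies in a bounded component of $\Half\setminus\gamma(0,t']$, so that $I\cap H_{t'}=\emptyset$ and both sides of the biconditional are vacuously false; the paper's proof tacitly assumes $I$ is a crosscut of $H_{t'}$ and does not address this, so your preliminary observation is a genuine (if minor) improvement in rigor.  One small point to make airtight in a write-up: when you invoke the finite-crossing lemma for $V=H_t$ and then perturb to transversality, you should note that the perturbation can be done while keeping $\xi$ inside $H_t$ and without introducing new tangential intersections with $\imax$, which is standard but worth stating since the whole argument hangs on counting crossings exactly.
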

\begin{pf}
The ``only if'' direction is precisely Corollary~\ref{oddCor}. Thus we
wish to show that if the number of elements of $\mathcal{D}_t$
contained in $I$ is odd, then $I$ separates $z$ from $w$.

Assume not, so the number of elements of $\mathcal{D}_t$ contained in
$I$ is odd but $I$ does not separate $z$ from $w$. $H_{t'} \setminus I$
has two components, one of which contains both $z$ and $w$. Consider
any curve $\eta$ connecting $z$ to $w$. Without loss of generality
assume that $\eta$ crosses each element of $\mathcal{D}_t$ exactly once
by simply removing any portion of the curve between the first and last
times that it crosses each element of $\mathcal{D}_t$. Since $\eta$
crosses each element of $\mathcal{D}_t$ contained in $I$ precisely
once, we know $\eta$ crosses $I$ an odd number of times, and hence it
must start and end in different components of $H_{t'}\setminus I$ which
contradicts the fact that it connects $z$ to $w$.
\end{pf}

We may now use this to prove that $I_t$ is well defined.
\begin{pf*}{Proof of well-definedness of $I_t$}
For a component $I$ of $\imax_t$ and $t' < t$, let $C_{t'}(I)$ denote
the component of $\imax_{t'}$ which contains $I$. We claim there exists
a unique component of $\imax_t$, which we will denote $I_t$, such that
for all $0 \le t' \le t$, we have $C_{t'}(I_t) \in\mathcal{D}_{t'}$.
Note that such an $I_t$ clearly satisfies all the conditions of the definition.

First we prove existence. Let $\{J_i\}_{i = 1}^\infty$ be the connected
components of $\imax_t$. Assume that none satisfy the above condition,
which is to say that for each $i$ there exists a $t_i \le t$ so that
$C_{t_i}(J_i)$ does not separate $z$ from $w$ in $H_{t_i}$. Now $\{
C_{t_i}(J_i)\}_{i = 1}^\infty$ covers $\imax_t$ since the $J_i$ did as
well, and moreover since by construction the $C_{t_i}(J_i)$ are either
contained in each other or disjoint, we may find a sub-collection $\{
C_{t_{i_k}}(J_{i_k})\}_{k = 1}^\infty$ which covers $\imax_t$ with all
elements pairwise disjoint. By Corollary~\ref{oddCor2} there are an
even number of elements of $\mathcal{D}_{t}$ contained in
$C_{t_{i_k}}(J_{i_k})$ for each $k$. However, since they cover
disjointly, this implies that $|\mathcal{D}_{t}|$ is even, which
contradicts Corollary~\ref{oddCor} completing the proof of existence.

Now we establish uniqueness. Let $I_t^{(1)}, I_t^{(2)}, \ldots,
I_t^{(\ell)}$ denote the components of $\imax_t$ such that for all $0
\le t' \le t$ we have $C_{t'}(I_t^{(i)}) \in\mathcal{D}_{t'}$, and
assume that $\ell> 1$. Define
\[
t_0 = \sup\bigl\{t' \dvtx \exists_{i \neq j}
\mbox{ s.t. } C_{t'}\bigl(I_t^{(i)}\bigr) =
C_{t'}\bigl(I_t^{(j)}\bigr)\bigr\}.
\]
By this definition, it is clear that $\gamma(t_0) \in\imax$. Moreover,
there exists a $t_1 < t_0$ such that $\gamma[t_1,t_0) \cap\imax=
\varnothing$ since if there did not then $\gamma(t_0)$ is a limit point\vadjust{\goodbreak}
of $\gamma(0$, $t_0)\cap\imax$ which implies that an earlier time would
have separated all the $I_t^{(i)}$ from each other contradicting the
choice of $t_0$. The components of $\imax_{t_0}$ are precisely those of
$\imax_{t_1}$ except for a single component, call it $J$, which is
split into $J_1,J_2$ in $\imax_{t_0}$ by $\gamma(t_0)$. By the choice
of $t_0$, $J$ is $C_{t_1}(I_t^{(i)})$ for some $i$ and both of
$J_1,J_2$ are $C_{t_0}(I_t^{(i)})$ for some $i$. This is a
contradiction since by Corollary~\ref{oddCor2} each of $J,J_1,J_2$ must
contain an odd number of elements of~$\mathcal{D}_{t}$.\looseness=-1~%
\end{pf*}

\section{The PDE for the Green's function}\label{PDEcomp}
We outline here the derivation of the PDE which governs the ordered
version of the multi-point Green's function. From Theorem
\ref{multitheorem}, we
know that for $z, w \in\Half$ with
\begin{eqnarray*}
\xi &=& \xi_\epsilon= \xi_{z,\epsilon} = \inf\{t\dvtx \Upsilon_t(z)
\leq
\epsilon\},
\\
\chi &=& \chi_\delta= \chi_{w,\delta} = \inf\{t\dvtx \Upsilon_t(w) \leq
\delta\},
\end{eqnarray*}
we have that
\[
G_\Half(z,w;0,\infty) = \frac{1}{c^2_*} \lim_{\epsilon,\delta
\rightarrow0^+}
\epsilon^{d-2}
\delta^{d-2} \Prob\{\xi< \chi< \infty\}.
\]

By the domain Markov property, and conformal invariance of $\mbox{SLE}$, one
can deduce that
\begin{eqnarray*}
M_t :\!& = & \E[G_\Half(z,w;0,\infty) | \F_t] \\
& = &G_{H_t}(z,w;0,\infty) \\
& = &|Z_t'(z)|^{2-d} |Z_t'(w)|^{2-d} \cdot G_\Half
(Z_t(z),Z_t(w);0,\infty)
\end{eqnarray*}
is a local martingale, where $Z_t$ is the unique conformal map defined
by (\ref{ZtDef}) which maps $H_t$ to $\Half$, sending
$\gamma
(t)$ to $0$. We will find the SDE which $M_t$ satisfies and use that
the drift must zero to find the differential equation that $G(x_1, y_1,
x_1, y_2) := G_\Half(x_1+iy_1,x_2+iy_2;0,\infty)$ must satisfy.

From (\ref{ZtDef}), we know that
\[
dZ_t(z) = \frac{a}{Z_t(z)} \,dt + d B_t ,
\]
and hence, letting $Z_t(z) = X_t(z) + iY_t(z)$, we see that
\begin{eqnarray*}
dX_t(z) & = &\frac{aX_t(z)}{X_t(z)^2 + Y_t(z)^2} \,dt + d B_t, \\
dY_T(z) & = &-\frac{aY_t(z)}{X_t(z)^2 + Y_t(z)^2} \,dt.
\end{eqnarray*}

To compute the SDE for $|Z_t'(z)|$, we must use the logarithm. First
note that
\[
dZ_t'(z) = -\frac{aZ_t'(z)}{Z_t(z)^2} \,dt\vadjust{\goodbreak}
\]
and hence that
\[
d [\log Z_t'(z)] = \frac{dZ_t'(z)}{Z_t'(z)} = -\frac{a}{Z_t(z)^2} \,dt.
\]
We may thus recover the norm of the absolute value by considering the
real part, yielding
\[
d |Z_t'(z)|^{2-d} = a(d-2)|Z_t'(z)|^{2-d}\frac{X_t(z)^2 -
Y_t(z)^2}{(X_t(z)^2+Y_t(z)^2)^2} \,dt.
\]
%

From these, we may compute the equation for $M_t$. Note that only
$X_t(z)$ and $X_t(w)$ have nonzero diffusion coefficients. Suppressing
the arguments of $G$ in the notation, we obtain the following:
\begin{eqnarray*}
dM_t & = & M_t\biggl[
a(d-2)\frac{X_t(z)^2 - Y_t(z)^2}{(X_t(z)^2+Y_t(z)^2)^2}+ a(d-2)\frac
{X_t(w)^2 - Y_t(w)^2}{(X_t(w)^2+Y_t(w)^2)^2} \\
&&\hphantom{M_t\biggl[}
{} + \frac{aX_t(z)}{X_t(z)^2 + Y_t(z)^2}\,\frac{\bd_{x_1}G}{G} + \frac
{aX_t(w)}{X_t(w)^2 + Y_t(w)^2}\,\frac{\bd_{x_2}G}{G} \\
&&\hphantom{M_t\biggl[}
{} -\frac{aY_t(z)}{X_t(z)^2 + Y_t(z)^2}\,\frac{\bd_{y_1}G}{G} -\frac
{aY_t(w)}{X_t(w)^2 + Y_t(w)^2}\,\frac{\bd_{y_2}G}{G} \\
&&\hspace*{133.2pt}
{} + \frac{1}{2}\,\frac{\bd_{x_1x_1}G}{G} + \frac{1}{2}\,\frac{\bd
_{x_2x_2}G}{G} + \frac{\bd_{x_1x_2}G}{G}
\biggr] \,dt\\
&&{} + M_t\biggl[\frac{\bd_{x_1}G}{G} + \frac{\bd_{x_2}G}{G}\biggr] \,dB_t.
\end{eqnarray*}
Collecting together the drift terms and specializing to $t=0$ yields
\begin{eqnarray*}
0 & = & a(d-2)\frac{x_1^2 - y_1^2}{(x_1^2+y_1^2)^2} G + a(d-2)\frac{x_2^2
- y_2^2}{(x_2^2+y_2^2)^2} G \\
&&{} + a\frac{x_1\,\bd_{x_1}G - y_1\,\bd_{y_1}G}{x_1^2 + y_1^2} + a\frac
{x_2\,\bd
_{x_2}G - y_2\,\bd_{y_2}G}{x_2^2 + y_2^2} \\
&&{} + \frac{1}{2}\,\bd_{x_1x_1}G + \frac{1}{2}\,\bd_{x_2x_2}G + \bd_{x_1x_2}G.
\end{eqnarray*}

This PDE has a particularly nice structure. Let
\[
L_i = a(d-2)\frac{x_i^2 - y_i^2}{(x_i^2+y_i^2)^2} + a\frac{x_i\,\bd_{x_i}
- y_i\,\bd_{y_i}}{x_i^2 + y_i^2} + \frac{1}{2}\,\bd_{x_ix_i}.
\]
This can be seen to be precisely the differential operator which arises
in the computation of the single point Green's function, but now we have
a copy for both $z$ and~$w$. With this we can rewrite the equation for
the multi-point Green's function as
\[
(L_1 + L_2 + \bd_{x_1x_2}) G = 0.
\]
Given this simple form it may be reasonable to look for solutions which
are, in some sense, asymptotically $G(z)G(w)$. Additionally, it is
worth noting that this extends to arbitrary $n$-point Green's functions by
\[
\Biggl[\sum_{i = 1}^n L_i + \sum_{1 \le i < j \le n} \bd_{x_ix_j}\Biggr] G = 0
\]
as one might expect.

The boundary conditions of this equation are not clear, and their
determination may provide bounds of intrinsic interest.

The above equation shows the PDE in its most symmetric form; however,
in order to find an explicit solution, it may be useful to exploit the
scaling rule for the Green's function to reduce this to an equation for a
function of three real variables. There is no unique way to do so, and
no such reductions have lead to a particularly simple equation. A
reasonable example would be to scale the above equation so that $y_1=1$
in which case we can find a three real variable function $\hat G$ so that
\[
G(x_1,y_1,x_2,y_2) = y_1^{2(d-2)}\hat G\biggl(\frac{x_1}{y_1},\frac
{x_2}{y_1},\frac{y_2}{y_1}\biggr).
\]
From this the PDE can be derived; however, the result is not illuminating.
%
\end{appendix}

\section*{Acknowledgments}

We would like to thank the anonymous reviewer of this paper for a very
careful reading and for pointing out a number of errors in an earlier
draft of this paper. We would also like to thank Mohammad Rezaei for
helpful comments.


%

\printaddresses

\end{document}